

\documentclass[12pt,twoside,titlepage]{article}
\usepackage{amsmath}
\usepackage{latexsym,amssymb}

\mathsurround1pt
\parskip=1pt plus .25pt minus .25pt
\setlength{\baselineskip}{10.5pt}

\setlength{\topmargin}{-50pt}
\setlength{\evensidemargin}{.32in}
\setlength{\oddsidemargin}{.32in}
\addtolength{\textwidth}{1.5cm}
\setlength{\textheight}{52\baselineskip}
\addtolength{\textheight}{75pt}

\font\eightrm=cmr8 scaled\magstep1
\font\fiverm=cmr5 scaled\magstep1

\title{}
\author{}
\newcommand{\lastname}{}
\makeatletter

\newcommand{\rightheadline}{\hfill\sc\lastname\hfill}
\newcommand{\leftheadline}{\hfill\sc\lastname\hfill}
\newcommand{\firstheadline}{}

\def\ps@headings{\let\@mkboth\@gobbletwo
    \gdef\@oddhead{\ifnum\value{page}=1\firstheadline
                  \else\rightheadline\rm\thepage\fi}
    \gdef\@oddfoot{}
    \gdef\@evenhead{\ifnum\value{page}=1\firstheadline
                  \else\rm\thepage\leftheadline\fi}
    \gdef\@evenfoot{}
}

\def\section{\@startsection{section}{1}{\z@}{-\bigskipamount}{\medskipamount}
{\centering\bf}}

\countdef\revised=100

\newcommand{\qed}{{\unskip\nobreak\hfil\penalty50\hskip .001pt \hbox{}
          \nobreak\hfil
          \vrule height 1.2ex width 1.1ex depth -.1ex
           \parfillskip=0pt\finalhyphendemerits=0\medbreak}\rm}

\renewcommand{\lastname}{Reilly}

\def\trivlist{\vspace{-\lastskip}\medbreak
 \@trivlist \labelwidth\z@ \leftmargin\z@
\itemindent\z@   \let\@itemlabel\@empty%
\def\makelabel##1{##1}}

\def\@thmcounter#1{\noexpand\arabic{#1}}
\def\@thmcountersep{}
\def\@begintheorem#1#2{\trivlist \item[\hskip 
\labelsep{\bf #1\ #2.\quad}]\it}
\def\@opargbegintheorem#1#2#3{\it \trivlist
      \item[\hskip \labelsep{\bf #1\ #2.\quad{\rm #3}}]}

\newtheorem{theorem}{Theorem}[section]

\newtheorem{lemma}[theorem]{Lemma}
\newtheorem{corollary}[theorem]{Corollary}
\newtheorem{definition}[theorem]{Definition}

\newtheorem{example}[theorem]{Example}

\newenvironment{proof}{\begin{trivlist}\item[\hskip%
\labelsep{\bf Proof.\quad}]}%
{\hfill\qed\rm\end{trivlist}}

\def\Proof.{\rm
  \par\ifdim\lastskip<\medskipamount\vspace{-\lastskip}\fi\smallskip
           \noindent {\bf Proof.}\enspace}

\font\fiverm=cmr5 scaled\magstep1

\def\tr{{\rm tr}\,}
\def\ltr{{\rm ltr}\,}
\def\rtr{{\rm rtr}\,}

\newcommand{\arttype}{}

\def\titlepage{\thispagestyle{headings}
 \vbox{\vspace{.5truecm}}
\noindent \arttype\par
\begin{center}
{\vbox{\baselineskip=15pt\large\bf \@title\boldmath}}
  \vskip .75truecm  
{\bf \@author} 
\end{center}

\def\abstract{\baselineskip=8pt\centerline{\bf Abstract}
\vspace{.05in} \noindent\quotation}
\def\endabstract{\if@twocolumn\else\endquotation\fi}}

\revised=1

\begin{document}

\input pictex

\ \\[0.5cm]

\title{Self-dual Varieties and networks in the lattice of varieties of Completely Regular Semigroups}
\author{Norman R. Reilly}
\titlepage

\vskip1.1cm

{\footnotesize
\begin{abstract}
\noindent
The kernel relation $K$ on the lattice $\mathcal{L}(\mathcal{CR})$ of varieties 
of completely regular semigroups has been a central component in many 
investigations into the structure of $\mathcal{L}(\mathcal{CR})$.  However, 
apart from the $K$-class of the trivial variety, which is just the lattice of 
varieties of bands, the detailed structure of kernel classes has remained a mystery until recently.  
Kad'ourek [RK2] has shown that for two large 
classes of subvarieties of $\mathcal{CR}$ their kernel classes are singletons.
Elsewhere (see [RK1], [RK2], [RK3]) we have provided a detailed analysis of the kernel classes of 
varieties of abelian groups.  Here we study more 
general kernel classes.  We begin with a careful development of the concept of duality in 
the lattice of varieties of completely regular semigroups and then show that the kernel 
classes of many varieties, including many self-dual varieties, of completely 
regular semigroups contain 
multiple copies of the lattice of varieties of bands as sublattices.  \\
\end{abstract}
}
\vskip1cm

\noindent
{\bf Key Words and Phrases}  Semigroups, lattices, varieties, completely regular.  \\

\noindent
{\bf AMS Mathematics Subject Classification 2010}: 20M05, 20M07, 20M17\\

\section{Introduction}

Following the presentation of necessary background in Section 2, we introduce 
in Section 3 the related concepts of the mirror image of a word in the free unary semigroup, 
the dual of a semigroup or variety of completely regular semigroups and self-dual varieties.  
The concept of duality in semigroups appears implicitly in [CP], while the concept of the dual semigroup
is explicitly defined in [L].  This leads to the concepts of the mirror image or reverse of a word, dual 
identities and dual semigroup varieties.  Petrich [Pe2007], [Pe2015a] extended these concepts to 
completely regular semigroups.   Here we take a slightly different 
approach from Petrich and take as our starting point a completely unambiguous definition of 
the mirror image of a word in $Y^+$, the free semigroup on a set $X$ together 
with the symbols $($ and $)^{-1}$.  This incorporates the definitions 
introduced by Clifford and Preston, and Lallement and  
restricts to exactly what we want in the free unary semigroup $U_X$ and is mapped on 
to exactly the relation that we want in the free completely regular semigroup 
$CR_X = U_X/\zeta$ under the natural mapping $u \longrightarrow u\zeta$.  
We develop some basic results specific to completely regular semigroup 
varieties.\\

Of particular interest to us are the relations $K_{\ell} = K\cap T_{\ell}$ and its dual 
$K_r = K\cap T_r$.   These give us a finer dissection of any $K$-class into $K_{\ell}$ and 
$K_r$-classes.  As these are complete congruences, all the classes are intervals of the form 
$[\mathcal{V}_{K_{\ell}}, \mathcal{V}^{K_{\ell}}]$ and $[\mathcal{V}_{K_r}, \mathcal{V}^{K_r}]$, 
respectively, which leads us to the operators $\mathcal{V} \longrightarrow \mathcal{V}^{K_{\ell}}$ 
and $\mathcal{V}\longrightarrow \mathcal{V}^{K_r}$.  We use the repeated application of 
these operators applied to varieties that are self-dual together with their intersections 
to generate a family of subvarieties of $\mathcal{V}K$.  In contrast with 
the similar situation using the operators $\mathcal{V} \longrightarrow \mathcal{V}^{T_{\ell}}$ 
and $\mathcal{V} \longrightarrow \mathcal{V}^{T_r}$, these varieties always constitute a sublattice 
of $\mathcal{V}K$.   In certain circumstances, we are able to characterize the 
largest variety in $\mathcal{L}(\mathcal{CR})$ whose intersection with $\mathcal{V}^K$ 
is a particular element of this sublattice.   This mimics the result due to Reilly and Zhang [RZ] 
for the particular case where $\mathcal{V} = \mathcal{T}$ and $\mathcal{V}^K = \mathcal{B}$, 
the variety of bands. 

In Sections 5 and 6 we show that there exist multiple copies of the lattice $\mathcal{L}(\mathcal{B})$ 
within certain $K$-classes in $\mathcal{L}(\mathcal{CR})$.  In Section 5 we provide a canonical 
method for generating such sublattices by applying the operators  
$\mathcal{V} \longrightarrow \mathcal{V}^{K_{\ell}}$ 
and $\mathcal{V}\longrightarrow \mathcal{V}^{K_r}$ 
repeatedly to 
three subvarieties $\mathcal{V}, 
\mathcal{V}_{\ell}, \mathcal{V}_r$ related as follows: $\mathcal{V} \subset 
\mathcal{V}_{\ell} \subset \mathcal{V}^{K_{\ell}}, \mathcal{V} \subset 
\mathcal{V}_r \subset \mathcal{V}^{K_r}$.  The varieties $\mathcal{V}_{\ell}$ and 
$\mathcal{V}_r$ can be chosen freely subject to these constraints.   This abundance of  
sublattices isomorphic to $\mathcal{L}(\mathcal{B})$ in certain kernel classes is in sharp 
contrast to the situation in Kad'ourek [RK2], in which it is shown that, for two large 
classes of subvarieties of $\mathcal{CR}$, their kernel classes are singletons.  In Section 6 
we show how it is not always necessary to employ the operators 
$\mathcal{V} \longrightarrow \mathcal{V}^{K_{\ell}}$ 
and $\mathcal{V}\longrightarrow \mathcal{V}^{K_r}$
at each level.

\section{Background}

We refer the reader to the book [PR99] for a general background on
completely regular semigroups and for all undefined notation and
terminology.  For an equivalence relation $\rho$ on a nonempty set $X$ and $x\in
X,$ $x\rho$ denotes the $\rho$-class  of $x.$ In any lattice $L,$
for $a,b \in L$ such that $a\leq b$ define the \emph{interval} $[a,b]$
to be $\{c\in L \mid a\leq c\leq b\}.$

Let $S$ be a completely regular semigroup. Then $E(S)$ denotes its set of idempotents
and $\mathcal{C}(S)$ the lattice of congruences on $S.$ For $\rho
\in \mathcal{C}(S),$ the \emph{kernel\/} and the \emph{trace\/} of
$\rho$ are given by $\ker \rho = \{a\in S \mid a\;\rho\;e \;\;\mbox{for
some}\;\; e\in E(S)\},$ $\,\tr\,\rho = \rho |_{E(S)},$ and the
\emph{left\/} and the \emph{right traces\/} of $\rho $ are $\ltr \rho
= \tr (\rho \vee \mathcal{L})^0$ and $\rtr \rho = \tr (\rho \vee
\mathcal{R})^0,$ where the join is taken within the lattice of
equivalence relations on $S.$ On $\mathcal{C}(S)$ we have several
important relations defined by
$$
\begin{array}{l}
\lambda K\rho \,\Longleftrightarrow\, \ker \lambda = \ker \rho \,,
\quad \lambda T_{\ell}\rho  \,\Longleftrightarrow\, \ltr\lambda  =
\ltr \rho \,, \\[0.2cm]
\lambda T\rho \,\Longleftrightarrow\, \tr \lambda
= \tr \rho \,, \quad \lambda T_r \rho \,\Longleftrightarrow\, \rtr
\lambda = \rtr \rho \,, \\[0.2cm]
K_{\ell} = K\cap T_{\ell}\,, \quad K_r = K\cap T_r\,, \,T = T_{\ell} \cap T_r .
\end{array}
$$
\vskip0.1cm

\noindent
Two fundamental facts concerning these relations 
are as follows (where $\epsilon$ denotes the identity relation):
$$
 K \cap T = K_{\ell} \cap K_r = \epsilon.
$$
See [PR99] for details. The classes of these relations are intervals which we write as
$$
\rho P = [\rho_P, \rho^P]\,, \qquad \mbox{for} \;P\in \{K, T_{\ell}, T_r, T,
K_{\ell}, K_r\}\,.
$$
This enables us to consider the associated operators $\rho \longrightarrow \rho_P, \rho 
\longrightarrow \rho^P$.  \\

Let $X$ be a countably infinite set. If $x_1,\ldots ,x_n \in X,$ then $w= x_1
\cdots x_n$ is a \emph{word\/} over $X,$ $h(w) = x_1$
is the \emph{head\/} of $w,$ $t(w) = x_n$ is the \emph{tail\/} of
$w,$ and $c(w) = \{x_1,\ldots ,x_n\}$ is the \emph{content\/} of $w.$
We denote by $X^+$ the free semigroup on $X$ consisting of all words
over $X.$ Let $Y = X\cup \{(\;\:,\;\,)^{-1}\}$ where
``$($'' and ``$)^{-1}$'' are two distinct elements not in $X.$ By $U_X$
denote the least subset of $Y^+$ satisfying
$$
\begin{array}{r l}
C({\rm i}) & \quad X\subseteq U_X\,, \\[0.2cm]
C({\rm ii}) & \quad u,v \in U_X \,\Longrightarrow\, uv \in U_X\,,
\\[0.2cm]
C({\rm iii}) & \quad u\in U_X \,\Longrightarrow\, (u)^{-1} \in U_X\,.
\hspace*{8cm}
\end{array}
$$

\noindent
We will often omit the subscript $X$ in $U_X$ when there is no danger
of ambiguity, and sometimes write $u^{-1}$ instead of $(u)^{-1}.$
We also write $u^0 = u(u)^{-1}$ and write $|u|_Y$ for the length of
$u$ over the alphabet $Y.$

By $C({\rm ii}),$ $U_X$ is a semigroup and by $C({\rm iii}),$ the mapping
$u\longrightarrow (u)^{-1}$ $\,(u\in U_X)$ is a unary operation on
$U_X.$ An alternative description of $U_X$ due to A.H.Clifford [C] follows.

\begin{lemma}\hspace*{-0.5cm}{\rm ([PR99] Lemma I.10.1).}
\ \,The set $U_X$ consists precisely of those elements $w$ of $Y^+$
that satisfy the following conditions{\,\rm :}\\[-0.3cm]

\noindent
{\rm (i)} the number of occurrences of $\;($ in $w$ equals the
number of occurrences of $\;)^{-1},$ \\

\noindent
{\rm (ii)} in each initial segment of $w,$ the number of occurrences
of $\;($ is at least as great as the number of occurrences of
$\;)^{-1}.$ \\

\noindent
{\rm (iii)} The symbol $\:(\:$ is never followed immediately in $w$ 
by the symbol $\;)^{-1}.$
\end{lemma}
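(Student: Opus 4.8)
The plan is to prove the two inclusions separately. Write $W$ for the set of all $w \in Y^+$ satisfying conditions (i)--(iii), so that the goal is to show $U_X = W$. The inclusion $U_X \subseteq W$ is the ``soundness'' direction and should follow formally from the minimality clause in the definition of $U_X$, while the inclusion $W \subseteq U_X$ is the genuinely combinatorial direction and is where I expect the real work to lie.

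For $U_X \subseteq W$, I would verify that $W$ itself satisfies the three closure conditions $C(\mathrm{i})$--$C(\mathrm{iii})$; since $U_X$ is by definition the \emph{least} subset of $Y^+$ with these properties, this immediately forces $U_X \subseteq W$. That $X \subseteq W$ is clear, since a single letter contains no parentheses. For closure under concatenation and under $u \mapsto (u)^{-1}$, conditions (i) and (ii) are routine book-keeping: the parenthesis counts simply add, and every initial segment of $uv$ is either an initial segment of $u$ or all of $u$ followed by an initial segment of $v$. The one delicate point is the preservation of (iii), and for this I would first record the auxiliary observation that \emph{no word of $W$ ends in the symbol $($ and no word of $W$ begins with $)^{-1}$}: a leading $)^{-1}$ is excluded at once by (ii) applied to the length-one initial segment, while if a word ended in $($ then deleting that last symbol would leave an initial segment in which the count of $($ is strictly smaller than the count of $)^{-1}$, again contradicting (ii). Granting this, the only new adjacencies produced by concatenation (namely $t(u)$ next to $h(v)$) and by parenthesizing (namely $($ next to $h(u)$, and $t(u)$ next to $)^{-1}$) can never be an occurrence of $($ immediately followed by $)^{-1}$, so (iii) is inherited.

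For the reverse inclusion $W \subseteq U_X$, I would argue by induction on the length $|w|_Y$. By (ii) a word of $W$ cannot begin with $)^{-1}$, so its first symbol is either a letter of $X$ or the symbol $($. If $w = x w'$ with $x \in X$, then $w'$ again lies in $W$ (deleting a letter alters no parenthesis count and destroys no relevant adjacency), so $w' \in U_X$ by induction and $w = x w' \in U_X$ by $C(\mathrm{ii})$; taking $w'$ empty gives the base case $w = x \in X$. If instead $w$ begins with $($, I would introduce the height function $d(j)$ defined as the number of symbols $($ among $w_1,\dots,w_j$ minus the number of symbols $)^{-1}$ among them. Then $d(0)=0$, $d(1)=1$, one has $d(j) \ge 0$ for all $j$ by (ii), and $d(n)=0$ by (i), where $n = |w|_Y$. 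Letting $k$ be the least index with $d(k)=0$, minimality gives $d(j) \ge 1$ for $1 \le j < k$, and since $d$ changes by at most $1$ per symbol and letters leave it unchanged, the drop from $d(k-1)=1$ to $d(k)=0$ forces $w_k = )^{-1}$. This yields the factorization $w = (v)^{-1} w''$ with $v = w_2 \cdots w_{k-1}$ and $w'' = w_{k+1}\cdots w_n$.

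The crucial role of condition (iii) enters precisely here: it guarantees that $v$ is \emph{nonempty}, for an empty $v$ would mean $k=2$, i.e.\ $w_1 w_2$ equals $($ immediately followed by $)^{-1}$, which (iii) forbids. Reading off the height function then shows $v, w'' \in W$: the counting conditions for $v$ follow from $d(j) \ge 1$ on $1 \le j < k$ together with $d(k-1)=1$, those for $w''$ from $d(k+j) - d(k) = d(k+j) \ge 0$, and (iii) is inherited by both factors since all their internal adjacencies are already adjacencies of $w$. As $v$ and $w''$ are strictly shorter than $w$, induction gives $v, w'' \in U_X$; hence $(v)^{-1} \in U_X$ by $C(\mathrm{iii})$ and $w = (v)^{-1} w'' \in U_X$ by $C(\mathrm{ii})$, with the case of empty $w''$ handled directly. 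I expect the main obstacle to be this second inclusion, and within it the identification of the matching $)^{-1}$ as the first return of the height function to zero: this is the step that makes the parenthesis-matching rigorous, and it is exactly the point at which condition (iii) is indispensable, since it is what rules out the otherwise-inadmissible empty factor $()^{-1}$.
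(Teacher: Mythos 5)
Your proof is correct. Note that the paper itself gives no argument for this lemma at all --- it is quoted verbatim from [PR99], Lemma I.10.1 (going back to Clifford) --- so there is nothing internal to compare against; your two-inclusion argument (checking that the set $W$ of words satisfying (i)--(iii) is closed under $C(\mathrm{i})$--$C(\mathrm{iii})$ and invoking minimality of $U_X$ for one direction, and inducting on $|w|_Y$ via the first return to zero of the height function for the other) is the standard proof, and your identification of condition (iii) as exactly what excludes the empty factor $(\;)^{-1}$ in the factorization step is the right observation.
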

\vskip0.1cm

\begin{lemma}\hspace*{-0.5cm}{\rm ([PR99] Lemmas I.10.3, I.10.4,
I.10.5).} \\[-0.1cm]

\noindent
{\rm (i)} Let $\iota_X: \,x\longrightarrow x$ be the embedding of
$X$ in $U_X.$ Then $(U_X, \iota_X)$ is a free unary semigroup on $X.$
\\

\noindent
{\rm (ii)} A word $w\in U$ is irreducible if and only if $w\in X$
or $w=(u)^{-1}$ for some $u\in U_X.$ \\

\noindent
{\rm (iii)} Each element $w$ of $U$ can be expressed uniquely as a
product of irreducible elements.
\end{lemma}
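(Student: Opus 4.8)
The plan is to prove the three parts in the logical order (ii), (iii), (i), since the free property in (i) is cleanest once unique factorization (iii) is in hand, and (iii) in turn rests on the description of irreducibles in (ii). Throughout, the essential tool is the preceding characterization of $U_X$ together with a ``matching parenthesis'' bookkeeping argument. Call $w \in U_X$ \emph{irreducible} if it cannot be written as a product $uv$ with $u,v \in U_X$. For $0 \le p \le |w|_Y$ let $d(p)$ denote the number of occurrences of $($ minus the number of occurrences of $)^{-1}$ among the first $p$ letters of $w$; by parts (i) and (ii) of the preceding lemma, when $w \in U_X$ one has $d(0) = d(|w|_Y) = 0$ and $d(p) \ge 0$ throughout.

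To prove (ii), I would argue both implications by this bookkeeping. If $w = (u)^{-1}$, then since $u \in U_X$ is itself balanced one gets $d(p) \ge 1$ for every $1 \le p < |w|_Y$, so $d$ returns to $0$ only at the end; any factorization $w = w_1 w_2$ with $w_1 \in U_X$ would force $d(|w_1|_Y) = 0$ and hence $w_2$ empty, contradicting $w_2 \in U_X$. Thus $w$ is irreducible, and elements of $X$ are irreducible by length. Conversely, if $w$ is neither a single generator nor of the form $(u)^{-1}$, its first letter is either some $x \in X$ — in which case deleting it leaves a word still satisfying the three conditions of the preceding lemma, giving the proper factorization $w = xw'$ — or the symbol $($, in which case the first $p > 0$ with $d(p) = 0$ occurs strictly before the end and cuts off a prefix of the form $(u)^{-1}$, again yielding a proper factorization. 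Hence $w$ is reducible, proving the contrapositive.

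For (iii), existence of a factorization into irreducibles follows by repeatedly splitting off the leftmost irreducible factor identified above, a process that terminates since $|w|_Y$ strictly decreases. For uniqueness, the point is that the leftmost irreducible factor of any $w \in U_X$ is \emph{forced}: it is $x$ when $w$ begins with $x \in X$, and otherwise it is the prefix $(u)^{-1}$ determined by the first return of $d$ to $0$. Since this prefix depends only on $w$ and not on any chosen factorization, two factorizations of $w$ must share their first factor; cancelling it and inducting on $|w|_Y$ gives uniqueness. Finally, for (i) I would verify the universal property directly. Given a unary semigroup $S$ and a map $\phi: X \to S$, define $\bar\phi: U_X \to S$ by induction on $|w|_Y$: put $\bar\phi(x) = \phi(x)$ for $x \in X$, put $\bar\phi((u)^{-1}) = (\bar\phi(u))^{-1}$ on bracket-type irreducibles (legitimate as $|u|_Y < |w|_Y$), and put $\bar\phi(w) = \bar\phi(r_1)\cdots\bar\phi(r_m)$ when $w = r_1 \cdots r_m$ is the unique factorization of (iii) into $m \ge 2$ irreducibles; unique factorization makes this well defined. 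That $\bar\phi$ preserves products follows because the irreducible factorization of a product $wv$ is the concatenation of those of $w$ and $v$, and preservation of the unary operation is immediate since $(w)^{-1}$ is always irreducible of bracket type. Uniqueness of $\bar\phi$ is clear, as any unary homomorphism extending $\phi$ must obey the same two recursions.

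The main obstacle is the unique-readability argument underlying (iii): one must be certain that the match of the initial $($ — equivalently, the first return of $d$ to zero — is intrinsic to the word and independent of how it was built up. Once this positional determinacy is secured from the balance conditions of the preceding lemma, parts (i) and (iii) become routine structural inductions.
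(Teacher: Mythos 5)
Your argument is correct, but there is nothing in the paper to compare it against: the paper states this lemma purely as a quotation of [PR99], Lemmas I.10.3--I.10.5, and offers no proof of its own. What you have written is essentially a self-contained reconstruction of the standard argument from [PR99] (going back to Clifford): everything is driven by the parenthesis-balance characterization of $U_X$ in the preceding lemma, from which you correctly extract that the leftmost irreducible factor of any $w\in U_X$ is positionally determined (a generator if $w$ starts in $X$, otherwise the prefix ending at the first return of your counting function $d$ to zero). That observation gives (ii), then existence and uniqueness of the factorization in (iii), and the freeness in (i) follows by the structural recursion you describe, with well-definedness guaranteed by unique factorization and the homomorphism property guaranteed by the fact that irreducible factorizations concatenate under products. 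The one step you compress --- that if $w$ begins with $($ and the first return of $d$ to zero is at the very end then $w$ is necessarily of the form $(u)^{-1}$ with $u\in U_X$ nonempty (using condition (iii) of the characterization to rule out $u$ empty) --- is exactly the ``positional determinacy'' you flag as the main obstacle, and your balance conditions do secure it. So the proposal is a sound proof of a result the paper itself only cites.
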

\vskip0.1cm

Let $S$ be a completely regular semigroup. As $S$ is a union of its
(maximal) subgroups,  we have a unary operation
$a\longrightarrow a^{-1}$ on $S$ where $a^{-1}$ is the inverse of $a$ in
the maximal subgroup of $S$ containing $a.$ Hence for the purpose
of studying varieties of completely regular semigroups, they are
considered with the binary operation of multiplication and the unary
operation of inversion. We write $a^0 = aa^{-1} \;(= a^{-1}a)$ for
any element $a$ of $S.$

The class $\mathcal{CR}$ of all completely regular semigroups
constitutes a variety. It is defined, within the class of unary
semigroups by the identities
$$
a(bc) = (ab)c\,, \quad a = aa^{-1}a\,, \quad (a^{-1})^{-1} = a\,,
\quad aa^{-1} = a^{-1}a\,.
$$
Let $x\in X$ and $\zeta$ denote the least fully invariant congruence
on $U_X$ containing the pairs
$$
(x, xx^{-1}x)\,, \quad (x,(x^{-1})^{-1})\,, \quad (xx^{-1},
x^{-1}x)\,.
$$
\vskip0.1cm

\begin{lemma}  {\rm ([C] Theorem 3.1)} 
$\zeta $ is the least unary fully invariant congruence on $U_X$ such
that $CR_X = U_X/ \zeta $ is completely regular. Moreover, $CR_X$
is a free completely regular semigroup on $X.$
\end{lemma}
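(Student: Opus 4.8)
The plan is to invoke the standard correspondence from universal algebra between the subvarieties of a given variety and the fully invariant congruences on its free objects. By Lemma 2.2(i), $(U_X, \iota_X)$ is the free unary semigroup on $X$, and $\mathcal{CR}$ is the subvariety of unary semigroups defined by the identities $a = aa^{-1}a$, $(a^{-1})^{-1} = a$, $aa^{-1} = a^{-1}a$ (associativity being automatic, since $U_X$ is a subsemigroup of the free semigroup $Y^+$ under concatenation). It therefore suffices to show that $\zeta$ is exactly the fully invariant congruence corresponding to these three defining identities and that the quotient carries the requisite universal property.

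First I would confirm that $CR_X = U_X/\zeta$ is completely regular and that $\zeta$ is least with this property. Since $\zeta$ is a unary congruence, $U_X/\zeta$ is a unary semigroup whose multiplication is associative, inherited from $Y^+$. The three generating pairs lie in $\zeta$, so in the quotient they become laws, and $U_X/\zeta$ satisfies all four defining identities of $\mathcal{CR}$. For minimality, let $\rho$ be any unary fully invariant congruence with $U_X/\rho$ completely regular. Applying the identity $a = aa^{-1}a$ to $a = x\rho$ gives $(x,xx^{-1}x)\in\rho$, and likewise the other two generating pairs lie in $\rho$; as $\rho$ is fully invariant and $\zeta$ is the least fully invariant congruence containing those pairs, $\zeta \subseteq \rho$.

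Next comes freeness. Given a completely regular semigroup $S$ and a map $\phi: X \to S$, freeness of $U_X$ yields a unique unary semigroup homomorphism $\bar\phi: U_X \to S$ extending $\phi$. Since $S$ satisfies the three defining identities under every substitution, for each endomorphism $\alpha$ of $U_X$ and each generating pair $(u,v)$ of $\zeta$ we have $\bar\phi(\alpha(u)) = \bar\phi(\alpha(v))$, because $\bar\phi\circ\alpha$ is again a homomorphism into $S$. Thus $\ker\bar\phi$ is a congruence containing every endomorphic image of the three generating pairs; as $\zeta$ is the congruence generated precisely by these images, $\zeta \subseteq \ker\bar\phi$. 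Hence $\bar\phi$ factors uniquely through the natural map $U_X \to CR_X$, yielding a unique homomorphism $CR_X \to S$ sending $x\zeta$ to $\phi(x)$, which is exactly the universal property of the free completely regular semigroup on $X$.

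The main obstacle is this final factoring step. The kernel of a single homomorphism need not be fully invariant, so one cannot directly quote the minimality of $\zeta$ among fully invariant congruences; the crucial point is that $S$ satisfies the identities as \emph{laws}, forcing $\ker\bar\phi$ to absorb every substitution instance of the generating pairs, which is exactly enough to contain the fully invariant closure $\zeta$. Keeping the distinction between ``contains the three pairs'' and ``contains all endomorphic images of the three pairs'' straight is the one genuinely delicate aspect; the remainder is the routine identity-to-congruence dictionary.
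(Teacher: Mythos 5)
Your proposal is correct. The paper itself offers no proof of this lemma: it is quoted verbatim from Clifford ([C], Theorem 3.1), so there is nothing internal to compare against. Your argument is the standard one via the correspondence between subvarieties of unary semigroups and fully invariant congruences on the free unary semigroup $U_X$, and it is sound: the quotient $U_X/\zeta$ satisfies the defining identities of $\mathcal{CR}$ as laws precisely because $\zeta$ is fully invariant (not merely because it contains the three generating pairs), minimality follows by evaluating the identities at the generators, and the factorization of $\bar\phi$ through $U_X/\zeta$ works because $\zeta$ coincides with the ordinary congruence generated by the set of \emph{all endomorphic images} of the generating pairs, a set which $\ker\bar\phi$ must contain since $S$ satisfies the identities under every substitution. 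You correctly isolated the one delicate point, namely that $\ker\bar\phi$ need not be fully invariant, so one must pass through the endomorphism-closed generating set rather than quoting minimality among fully invariant congruences directly.
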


For any $\mathcal{V}\in \mathcal{L}(\mathcal{CR})$ we denote by $\zeta_{\mathcal{V}}$ 
the fully invariant congruence on $CR_X$ corresponding to $\mathcal{V}$.
We denote by $\mathcal{CR}$ the variety of all completely regular
semigroups and by $\mathcal{L}(\mathcal{CR})$ the lattice of its
subvarieties. Via the usual antiisomorphism of the lattice of fully
invariant congruences on a free completely regular semigroup of
infinite rank and $\mathcal{L}(\mathcal{CR}),$ the relations $K, T, 
\,T_{\ell}$ and $T_r$ defined above transfer to the lattice of fully 
invariant congruences on $\mathcal{L}
(\mathcal{CR})$ in an obvious way.   In this context, the relations 
$K, K_{\ell}, K_r, T_{\ell}$ and $T_r$ have the important property of 
being complete congruences.   
We use the same notation for these
relations on $\mathcal{L}(\mathcal{CR})$ (and their intervals) as
for the corresponding relations on semigroups. For these operators, we
write for example, $\mathcal{V}_K,$ $\mathcal{V}^{KT_r} =
(\mathcal{V}^K)^{T_r},$ and so on. For $\mathcal{V} \in \mathcal{L}
(\mathcal{CR}),$ $\mathcal{L}(\mathcal{V})$ denotes the lattice of
all subvarieties of $\mathcal{V}.$ \\

The following varieties occur frequently:  $\mathcal{T, LZ, RZ, RB, G, R\mbox{e}G, CS}$ - the varieties of 
trivial semigroups,  left zero semigroups, right zero semigroups, rectangular bands,  groups, rectangular groups and completely simple semigroups, 
respectively, all of which are subvarieties of $\mathcal{CS}$, and $\mathcal{S, LNB, RNB, LRB, RRB,} 
\mathcal{R}e\mathcal{B}, \mathcal{SG, 
LRO, RRO, B, O}$ - 
the varieties of semilattices, left normal bands, 
right normal bands, left regular bands, right regular bands, regular bands, semilattices of groups, 
left regular 
orthogroups, right regular 
orthogroups, bands and 
orthogroups, respectively, 
all of which are varieties of completely regular semigroups containing 
$\mathcal{S}$.   \\  

\begin{lemma}
$\mathcal{CR}_P = \mathcal{CR}$ $\; (P\in \{K, T, T_{\ell}, T_r, T,
K_{\ell}, K_r\}).$
\end{lemma}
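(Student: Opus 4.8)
The plan is to pass to the congruence side of the antiisomorphism between $\mathcal{L}(\mathcal{CR})$ and the lattice of fully invariant congruences on $CR_X$, under which the greatest variety $\mathcal{CR}$ corresponds to the identity congruence $\epsilon$. Since $\mathcal{CR}$ is the top of $\mathcal{L}(\mathcal{CR})$, the upper end $\mathcal{CR}^P$ of each $P$-class is automatically $\mathcal{CR}$, so the entire content of the lemma is that each $P$-class of $\mathcal{CR}$ is the singleton $\{\mathcal{CR}\}$. Equivalently, I must show that the only fully invariant congruence on $CR_X$ having the same value as $\epsilon$ of the relevant invariant --- $\ker$, $\tr$, $\ltr$ or $\rtr$ --- is $\epsilon$ itself.

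First I would cut down the list of relations. Because $T = T_\ell \cap T_r$, $K_\ell = K \cap T_\ell$ and $K_r = K \cap T_r$, each of the $P$-classes of $\mathcal{CR}$ for $P \in \{T, K_\ell, K_r\}$ is an intersection of two of the classes for $K$, $T_\ell$, $T_r$; hence once those three classes are shown to be trivial, the remaining three are trivial as well. Moreover the mirror-image duality developed in Section 3 interchanges $\ltr$ and $\rtr$, hence $T_\ell$ and $T_r$, and fixes the self-dual variety $\mathcal{CR}$, so $\mathcal{CR}_{T_r} = \mathcal{CR}$ will follow from $\mathcal{CR}_{T_\ell} = \mathcal{CR}$ by applying the duality. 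It therefore suffices to treat the two cases $P = K$ and $P = T_\ell$ directly.

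For $P = K$ I would show that every fully invariant congruence $\rho \supsetneq \epsilon$ satisfies $\ker\rho \supsetneq E(CR_X) = \ker\epsilon$, so that $\rho$ is not $K$-related to $\epsilon$ and hence $\mathcal{CR}_K = \mathcal{CR}$. Picking $u,v$ with $u\,\rho\,v$ and $u \neq v$ and multiplying on the left by $u^{-1}$ gives $u^0 = u^{-1}u \;\rho\; u^{-1}v$, so $u^{-1}v$ lies in $\ker\rho$; if $u^{-1}v$ is not idempotent in $CR_X$ we are finished. The one real point --- and the place where full invariance enters --- is to exploit that $X$ is infinite to select an endomorphism $\phi$ of $CR_X$ carrying the relation $u=v$ to one in which the corresponding element $\phi(u)^{-1}\phi(v)$ is a non-idempotent of $CR_X$; this uses the explicit description of $CR_X$ recalled in the preceding lemmas and forces $\ker\rho \neq E(CR_X)$.

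The case $P = T_\ell$ is where I expect the main difficulty. Here $\ltr\epsilon = \tr(\mathcal{L}^0)$, with $\mathcal{L}^0$ the largest congruence contained in Green's relation $\mathcal{L}$, is the finest left trace possible, and the task is to prove that any $\rho \supsetneq \epsilon$ coarsens it --- that is, identifies two idempotents of $CR_X$ lying in distinct classes of $\mathcal{L}^0$. The obstacle is that $\ltr$ is far less transparent than $\ker$: one must control the behaviour of the $\mathcal{L}$-structure on the idempotents of $CR_X$ under the fully invariant closure of a single nontrivial relation $u=v$, again using that $X$ is infinite. I would carry this out from the description of $\mathcal{L}$ on the free completely regular semigroup together with full invariance, or else quote it from [PR99]; the dual computation then settles $T_r$. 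Combining the three base cases with the reduction of the second paragraph yields $\mathcal{CR}_P = \mathcal{CR}$ for every $P$ in the list.
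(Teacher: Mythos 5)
Your reductions are sound: it suffices to handle $P\in\{K,T_{\ell},T_r\}$, since the classes for $T$, $K_{\ell}$, $K_r$ are intersections of these, and $T_r$ follows from $T_{\ell}$ by duality (though note that the duality machinery is only developed in Section 3, after this lemma appears). The congruence-side reformulation is also correct: one must show that $\epsilon$ is the unique fully invariant congruence on $CR_X$ whose kernel is $E(CR_X)$, and the unique one whose left trace is $\mathcal{L}^0|_{E(CR_X)}$.

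But those two statements are the entire content of the lemma, and your proposal leaves both unproved. For $P=K$ you reduce to producing an endomorphism $\varphi$ of $CR_X$ with $\varphi(u)^{-1}\varphi(v)\notin E(CR_X)$, and for $P=T_{\ell}$ you explicitly defer the argument (``or else quote it from [PR99]''). Neither step is routine: deciding idempotency and tracking the $\mathcal{L}$-structure of elements of $CR_X$ requires substantial input from the word problem for the free completely regular semigroup, and it is not evident that a substitution of the kind you want exists for an arbitrary nontrivial identity $u=v$ (your own example shows $u^{-1}v$ can fail to do the job, and you give no construction of the repairing $\varphi$). The paper does not prove the lemma either; it cites [PR88] Theorem 4.4 and [PT] Lemma 5.11, where these facts are obtained by a different, more structural route: $\mathcal{CR}$ is generated by its $E$-disjunctive members (those with greatest idempotent pure congruence equal to $\epsilon$), which by the description $\mathcal{V}^K=\{S\mid S/\tau\in\mathcal{V}\}$ forces $\mathcal{V}^K\neq\mathcal{CR}$ for every proper $\mathcal{V}$, and analogously $\mathcal{CR}$ is generated by its (left) fundamental members for the trace relations. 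To make your proof self-contained you would need either to import those generation theorems or to supply the missing analysis of $CR_X$; as it stands, the proposal is a correct framing with the decisive steps absent.
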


\begin{proof}
See [PR88] Theorem 4.4 and [PT] Lemma 5.11.
\end{proof}

If $\mathcal{V} \in \mathcal{L}(\mathcal{CR})$ has a basis $\{u_\alpha
= v_\alpha \}_{\alpha \in A},$ we write $\mathcal{V} = [u_\alpha
=v_\alpha ]_{\alpha \in A},$ or simply $\mathcal{V} = [u=v]$ if $A$
is a singleton. We shall sometimes write an identity $u^2=u$ as $u\in
E.$

\begin{lemma}
Let $\mathcal{V} = [u_\alpha =v_\alpha ]_{\alpha \in A} \in
[\mathcal{S}, \mathcal{CR}].$ \vspace*{0.1cm}Then
$$
\begin{array}{lcl}
\mathcal{V}^K &\!\!=\!\!& \{S\in \mathcal{CR} \mid S/ \tau \in
\mathcal{V}\} = [xu_\alpha y(xv_\alpha y)^{-1} \in E]_{\alpha \in A}
\\[0.4cm]
\mathcal{V}^T &\!\!=\!\!& \mathcal{V}^{T_{\ell}} \cap
\mathcal{V}^{T_r} = \{S\in \mathcal{CR} \mid S/ \mathcal{H}^0 \in
\mathcal{V}\} \\[0.1cm]
&\!\!=\!\!& [u_\alpha^0 = v_\alpha^0, (xu_\alpha y)^0 = (xv_\alpha
y)^0]_{\alpha \in A} \\[0.4cm]
\mathcal{V}^{T_{\ell}} &\!\!=\!\!& \{S\in \mathcal{CR} \mid S/
\mathcal{L}^0 \in \mathcal{V}\} \\[0.1cm]
&\!\!=\!\!& \mathcal{LG} \circ \mathcal{V} \\[0.1cm]
&\!\!=\!\!& [xu_\alpha =xu_\alpha (xv_\alpha )^0]_{\alpha \in
A} \\[0.4cm]
\mathcal{V}^{K_{\ell}} &\!\!=\!\!&  \mathcal{V}^K \cap
\mathcal{V}^{T_{\ell}} = \{S\in \mathcal{CR} \mid S/ (\tau \cap
\mathcal{L})^0 \in \mathcal{V}\} \\[0.1cm]
&\!\!=\!\!& [xu_\alpha y(xv_\alpha y)^{-1} \in E, xu_\alpha
=xu_\alpha (xv_\alpha )^0, xv_\alpha = xv_\alpha (xu_\alpha
)^0]_{\alpha \in A}\,.
\end{array}
$$
\end{lemma}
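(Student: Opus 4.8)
The plan is to treat each operator separately, in every case establishing the three advertised descriptions --- the operator $\mathcal{V}\mapsto\mathcal{V}^P$, the quotient form $\{S : S/\sigma\in\mathcal{V}\}$, and the explicit identity basis --- and then to read off the two intersection identities $\mathcal{V}^T=\mathcal{V}^{T_{\ell}}\cap\mathcal{V}^{T_r}$ and $\mathcal{V}^{K_{\ell}}=\mathcal{V}^K\cap\mathcal{V}^{T_{\ell}}$ from the relational identities $T=T_{\ell}\cap T_r$ and $K_{\ell}=K\cap T_{\ell}$ recorded in Section 2. The equalities of the operator forms with the quotient forms, together with the Malcev product formula $\mathcal{V}^{T_{\ell}}=\mathcal{LG}\circ\mathcal{V}$, are exactly the descriptions of Pastijn and Trotter recorded in [PR99], [PT], [PR88], which I would cite rather than reprove, so that the real work is the passage to the identity bases. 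Throughout I use the standing hypothesis $\mathcal{S}\subseteq\mathcal{V}$: since $\mathcal{S}\models u_\alpha=v_\alpha$, every defining pair satisfies $c(u_\alpha)=c(v_\alpha)$, and this content equality is what lets the band part of a word be treated as free and keeps the bookkeeping under control.

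For $T_{\ell}$ I would first recall that $S/\mathcal{L}^0\in\mathcal{V}$, where $\mathcal{L}^0$ is the least congruence on $S$ containing Green's relation $\mathcal{L}$, is equivalent to the requirement that $u_\alpha\mathrel{\mathcal{L}^0}v_\alpha$ holds identically on $S$. The point is then to show that, after prepending a variable $x$ to absorb full invariance, the relation $xu_\alpha\mathrel{\mathcal{L}^0}xv_\alpha$ is captured identically by the single identity $xu_\alpha=xu_\alpha(xv_\alpha)^0$; here I would use the defining identities of $\mathcal{CR}$ together with $c(u_\alpha)=c(v_\alpha)$ to see that one side suffices. The case of $T$ then follows from $T=T_{\ell}\cap T_r$ and its right-handed analogue, with $S/\mathcal{H}^0$ (the greatest band quotient, since $\mathcal{H}^0$ is the least band congruence and $\mathcal{H}^0\subseteq\mathcal{L}^0$) replacing $S/\mathcal{L}^0$; intersecting the left- and right-handed identities and simplifying yields $u_\alpha^0=v_\alpha^0$ and $(xu_\alpha y)^0=(xv_\alpha y)^0$.

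For $K$ I would take $\tau=\epsilon^K$, the largest congruence on $S$ whose kernel is $E(S)$, so that $\mathcal{V}^K=\{S : S/\tau\in\mathcal{V}\}$ is the quotient form. Unwinding membership, $S\in\mathcal{V}^K$ iff $u_\alpha\mathrel{\tau}v_\alpha$ holds identically, and the crux is the standard description $a\mathrel{\tau}b\iff xay(xby)^{-1}\in E(S)$ for all $x,y\in S$; granting this, $S\in\mathcal{V}^K$ iff $S\models xu_\alpha y(xv_\alpha y)^{-1}\in E$, which is the asserted basis. The inclusion $\mathcal{V}^K\subseteq[xu_\alpha y(xv_\alpha y)^{-1}\in E]$ is immediate, since in $\mathcal{V}$ the word $xu_\alpha y(xv_\alpha y)^{-1}$ collapses to $(xu_\alpha y)^0\in E$ and $\mathcal{V}^K$ shares the kernel of $\mathcal{V}$; the reverse inclusion is the substantive half and is precisely the claim that $\ker\zeta_{\mathcal{V}}$ is generated, as a fully invariant idempotent-closed set, by the words $xu_\alpha y(xv_\alpha y)^{-1}$. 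Finally $K_{\ell}=K\cap T_{\ell}$ gives $\mathcal{V}^{K_{\ell}}=\mathcal{V}^K\cap\mathcal{V}^{T_{\ell}}$, quotient form $S/(\tau\cap\mathcal{L})^0$, and a basis that is the union of the $K$- and $T_{\ell}$-bases, except that the one-sided $T_{\ell}$ identity must now be symmetrized to the pair $xu_\alpha=xu_\alpha(xv_\alpha)^0$, $xv_\alpha=xv_\alpha(xu_\alpha)^0$, because intersecting with $K$ destroys the asymmetry that previously allowed a single identity.

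The main obstacle is this last, completeness, direction: proving that the listed identities not merely hold in $\mathcal{V}^P$ but actually generate the fully invariant congruence $\zeta_{\mathcal{V}^P}$ --- equivalently, for $K$, that making the words $xu_\alpha y(xv_\alpha y)^{-1}$ idempotent forces every element of $\ker\zeta_{\mathcal{V}}$ to be idempotent. I expect to handle this by reducing an arbitrary consequence to the generators via a normal-form argument in $U_X/\zeta$, using the unique factorization into irreducibles of Lemma 2.2 and the content identity $c(u_\alpha)=c(v_\alpha)$ forced by $\mathcal{S}\subseteq\mathcal{V}$; equivalently, one verifies directly the relational descriptions of $\tau$ and $\mathcal{L}^0$ above, after which the bases drop out. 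This is exactly where $\mathcal{S}\subseteq\mathcal{V}$ is indispensable: without it the contents of $u_\alpha$ and $v_\alpha$ need not match, and neither the one-sided reduction for $T_{\ell}$ nor the kernel-generation claim for $K$ survives.
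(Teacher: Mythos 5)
The first thing to note is that the paper does not actually prove this lemma: its ``proof'' is a list of citations ([J] for $\mathcal{V}^K$, [J] and [R] for $\mathcal{V}^T$, [Pa], [PR90] and [Po2] for the $T_{\ell}$/$T_r$ descriptions, [R2] for $\mathcal{V}^{K_{\ell}}$), so there is no in-text argument for your attempt to be measured against. Your proposal is a sensible reconstruction of how those sources proceed --- treat the operators separately, identify each $\mathcal{V}^P$ with a quotient condition $S/\sigma\in\mathcal{V}$, then translate that condition into identities, with $\mathcal{V}^T=\mathcal{V}^{T_{\ell}}\cap\mathcal{V}^{T_r}$ and $\mathcal{V}^{K_{\ell}}=\mathcal{V}^K\cap\mathcal{V}^{T_{\ell}}$ falling out of $T=T_{\ell}\cap T_r$ and $K_{\ell}=K\cap T_{\ell}$ --- and your observation that $\mathcal{S}\subseteq\mathcal{V}$ forces $c(u_\alpha)=c(v_\alpha)$ correctly identifies the hypothesis that makes the translation possible.

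As a proof, however, the proposal has a genuine gap, and you locate it yourself: everything rests on (a) the relational characterization $a\,\tau\,b\iff xay(xby)^{-1}\in E(S)$ for all $x,y$, and (b) the completeness half of each basis claim, namely that the listed identities generate all of $\zeta_{\mathcal{V}^P}$ and not some smaller fully invariant congruence. For these you write ``granting this'' and ``I expect to handle this by reducing an arbitrary consequence to the generators via a normal-form argument,'' but that deferred step \emph{is} the content of the cited theorems of Jones, Pastijn, Pol\'{a}k and Petrich--Reilly; it is not a routine computation from Lemma 2.2, and Jones's derivation of the $\mathcal{V}^K$ basis, for example, runs through the Mal'cev-product and least-$\mathcal{V}$-congruence machinery rather than a word-by-word reduction in $U_X/\zeta$. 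Until those two points are argued (or explicitly cited, as the paper does), the proposal remains a plan rather than a proof. One smaller slip: $\mathcal{H}^0$ is the \emph{largest} congruence contained in $\mathcal{H}$, not the least band congruence, and $S/\mathcal{H}^0$ is a band only when $S$ is a cryptogroup; the correct statement is simply that $S\in\mathcal{V}^T$ iff $S/\mathcal{H}^0\in\mathcal{V}$, with no claim that this quotient is the greatest band image.
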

\vskip0.5cm

\begin{proof}
For the claims concerning $\mathcal{V}^K$ see [J], for those concering $\mathcal{V}^T$ see [J] and [R], 
for those concerning $T_r$ see [Pa], [PR90] and [Po2], for those concerning $\mathcal{V}^{K_{\ell}}$ see 
[R2].
\end{proof}

Let $\Theta $ be the set of all (nonempty) words over the alphabet
$\{T_{\ell}, T_r\}$ of the form $P_1\cdots P_n,$ where $P_1 \in
\{T_{\ell}, T_r\}$ and $P_i \neq P_{i+1}$ for $i = 1,\ldots ,n-1$
with \vspace*{0.1cm}multiplication
$$
(P_1\cdots P_m) (Q_1\cdots Q_n) = \left\{ \begin{array}{ll}
P_1 \cdots P_mQ_1\cdots Q_n & \quad \mbox{if}\;\; P_m\neq Q_1\,,
\\[0.2cm]
P_1\cdots P_mQ_2\cdots Q_n & \quad \mbox{otherwise.}
\end{array} \right.
$$
\vskip0.1cm

\noindent
Clearly $\Theta $ is a semigroup. We adjoin the \emph{empty word}
$\emptyset$ to $\Theta $ to form $\Theta^1.$

We can consider $\Theta $ as being the free semigroup $\{T_{\ell},
T_r\}^+$ on the set $\{T_{\ell},T_r\}$ modulo the relations
$T_{\ell}^2 = T_{\ell},$ $T_r^2 = T_r.$ To emphasize that $\tau \in
\Theta $ is a  word in $T_{\ell}, T_r$ we might write $\tau = \tau
(T_{\ell}, T_r).$ If we replace every occurrence of $T_{\ell}$
(respectively, $T_r$) in $\tau $ by $K_{\ell}$ (respectively, $K_r$)
then we obtain a word over $\{K_{\ell}, K_r\}$ which we will denote
by $\tau (K_{\ell}, K_r).$   Note that for each postitive integer $n$ there are exactly two 
distinct words in $\Theta$ of length $n$.  They are obtained, each from 
the other, by replacing each occurrence of $T_{\ell}$ by $T_r$ and vice 
versa.

It is easily seen that the following relation $\leq$ is a partial order on $ \Theta^1$:
$$
\sigma \leq \tau \quad \mbox{if} \quad |\sigma|  > |\tau| \quad \mbox{or}
\quad \sigma = \tau \qquad\quad (\sigma,\tau \in \Theta^1).
$$





\begin{lemma}
{\rm (i)}  $(\mathcal{V}^K)_{T_{\ell}} = \mathcal{V}^K,  
(\mathcal{V}^{T_{\ell}})_{T_r} = \mathcal{V}^{T_{\ell}}$ for $\mathcal{V} \in [\mathcal{S}, 
\mathcal{CR}]$.\\
{\rm (ii)}  $(\mathcal{V}^T)_K = \mathcal{V}^T$ for 
$\mathcal{V} \in [\mathcal{R}e\mathcal{B}, \mathcal{CR}]$.\\
{\rm (iii)} $\mathcal{V}^\sigma \cap \mathcal{V}^K = \mathcal{V}^{\sigma (K_{\ell}K_r)}\,$ for  
$\mathcal{V} \in [\mathcal{S}, \mathcal{CR}], \sigma \in \Theta^1$.
\end{lemma}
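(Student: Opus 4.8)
My plan is to isolate (iii) as the substantive assertion and prove it by induction on $|\sigma|$, obtaining (i) and (ii) by a separate join/meet argument. The standing tools are that each of $K, T_\ell, T_r, T, K_\ell, K_r$ is a complete congruence with interval classes $[\mathcal{W}_P,\mathcal{W}^P]$, so that every top operator $\mathcal{W}\longmapsto\mathcal{W}^P$ is a monotone, extensive, idempotent closure operator, every relation respects finite joins and meets, and $K$-classes are order-convex; the fundamental identities $K\cap T=\epsilon$ and $K_\ell\cap K_r=\epsilon$; and the formulas $\mathcal{W}^{K_\ell}=\mathcal{W}^K\cap\mathcal{W}^{T_\ell}$ and (dually) $\mathcal{W}^{K_r}=\mathcal{W}^K\cap\mathcal{W}^{T_r}$ for $\mathcal{W}\in[\mathcal{S},\mathcal{CR}]$.

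For (iii) the base case $\sigma=\emptyset$ is just $\mathcal{V}\cap\mathcal{V}^K=\mathcal{V}$, since $\mathcal{V}\subseteq\mathcal{V}^K$. For the inductive step I would write $\sigma=\tau P$ with $P\in\{T_\ell,T_r\}$ differing from the last letter of $\tau$, and treat $P=T_\ell$ (the case $P=T_r$ being dual through the $K_r$-formula). Setting $\mathcal{A}=\mathcal{V}^\tau$, $\mathcal{B}=\mathcal{V}^K$, $\mathcal{C}=\mathcal{A}\cap\mathcal{B}$, the induction hypothesis reads $\mathcal{C}=\mathcal{V}^{\tau(K_\ell K_r)}$, and the target $\mathcal{V}^\sigma\cap\mathcal{V}^K=\mathcal{V}^{\sigma(K_\ell K_r)}$ becomes $\mathcal{A}^{T_\ell}\cap\mathcal{B}=\mathcal{C}^{K_\ell}$. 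Because the operators are extensive we have $\mathcal{V}\subseteq\mathcal{C}\subseteq\mathcal{B}=\mathcal{V}^K$; in particular $\mathcal{C}\in[\mathcal{S},\mathcal{CR}]$, so the $K_\ell$-formula applies to $\mathcal{C}$, and convexity of the $K$-class $[\mathcal{V}_K,\mathcal{V}^K]$ gives $\mathcal{C}\,K\,\mathcal{V}^K$, whence $\mathcal{C}^K=\mathcal{B}$. Thus $\mathcal{C}^{K_\ell}=\mathcal{C}^K\cap\mathcal{C}^{T_\ell}=\mathcal{B}\cap\mathcal{C}^{T_\ell}$, and the step reduces to the identity $\mathcal{B}\cap\mathcal{C}^{T_\ell}=\mathcal{B}\cap\mathcal{A}^{T_\ell}$.

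This reduction is the crux, and it turns on meet-preservation by the congruence $T_\ell$. One inclusion is free: $\mathcal{C}\subseteq\mathcal{A}$ gives $\mathcal{C}^{T_\ell}\subseteq\mathcal{A}^{T_\ell}$ by monotonicity, hence $\mathcal{B}\cap\mathcal{C}^{T_\ell}\subseteq\mathcal{B}\cap\mathcal{A}^{T_\ell}$. For the reverse, I would use that $T_\ell$ is a congruence: from $\mathcal{A}\,T_\ell\,\mathcal{A}^{T_\ell}$ one gets $\mathcal{C}=\mathcal{A}\cap\mathcal{B}\,T_\ell\,\mathcal{A}^{T_\ell}\cap\mathcal{B}$, so $\mathcal{A}^{T_\ell}\cap\mathcal{B}$ lies in the $T_\ell$-class of $\mathcal{C}$ and is therefore contained in its top $\mathcal{C}^{T_\ell}$; intersecting with $\mathcal{B}$ yields $\mathcal{B}\cap\mathcal{A}^{T_\ell}\subseteq\mathcal{B}\cap\mathcal{C}^{T_\ell}$. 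This closes (iii), which, notably, uses neither (i) nor (ii).

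For (i) and (ii) I would run a uniform join-correction against $\mathcal{V}$. For (ii), given $\mathcal{U}\subseteq\mathcal{V}^T$ with $\mathcal{U}\,K\,\mathcal{V}^T$, joining with $\mathcal{V}$ (and using $\mathcal{V}\subseteq\mathcal{V}^T$ and $\mathcal{V}\,T\,\mathcal{V}^T$) gives $\mathcal{U}\vee\mathcal{V}\,K\,\mathcal{V}^T$ and $\mathcal{U}\vee\mathcal{V}\,T\,\mathcal{V}^T$, so $K\cap T=\epsilon$ forces $\mathcal{U}\vee\mathcal{V}=\mathcal{V}^T$; with $\mathcal{U}=(\mathcal{V}^T)_K$ this reduces (ii) to the containment $\mathcal{V}\subseteq(\mathcal{V}^T)_K$. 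The same manipulation for (i), starting from $\mathcal{U}\,T_\ell\,\mathcal{V}^K$, yields only $\mathcal{U}\vee\mathcal{V}\,K_\ell\,\mathcal{V}^K$, because here $K\cap T_\ell=K_\ell\neq\epsilon$, so it does not collapse on its own. I expect these final collapses to be the main obstacle: closing (i) will require the idempotent-type basis $\mathcal{V}^K=[xu_\alpha y(xv_\alpha y)^{-1}\in E]$ (equivalently the description $\mathcal{V}^K=\{S:S/\tau\in\mathcal{V}\}$) to show $\mathcal{V}^K$ admits nothing strictly smaller with the same left trace, and similarly for $(\mathcal{V}^{T_\ell})_{T_r}=\mathcal{V}^{T_\ell}$ through $\mathcal{V}^{T_\ell}=\mathcal{LG}\circ\mathcal{V}$; while closing (ii) is exactly where the hypothesis $\mathcal{V}\in[\mathcal{R}e\mathcal{B},\mathcal{CR}]$ must enter, placing $\mathcal{V}$ below the entire $K$-class of $\mathcal{V}^T$ via the explicit $T$-basis $[u_\alpha^0=v_\alpha^0,\,(xu_\alpha y)^0=(xv_\alpha y)^0]$.
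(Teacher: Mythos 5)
Your argument for part (iii) is complete and correct: the induction on $|\sigma|$, the use of convexity of the $K$-class to get $\mathcal{C}^K=\mathcal{V}^K$, the formula $\mathcal{C}^{K_\ell}=\mathcal{C}^K\cap\mathcal{C}^{T_\ell}$, and the two-sided comparison of $\mathcal{B}\cap\mathcal{C}^{T_\ell}$ with $\mathcal{B}\cap\mathcal{A}^{T_\ell}$ via the congruence property of $T_\ell$ all go through using only facts recorded in Section 2. Since the paper simply cites [R2, Proposition 3.4] for this part, your self-contained derivation is a genuine (and welcome) alternative. The same cannot be said of parts (i) and (ii), where what you have written is a reduction, not a proof, and the reduction stops exactly at the point where all of the mathematical content lies.

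Concretely: for (i) you concede that the join-correction argument only yields $\mathcal{U}\vee\mathcal{V}\;K_\ell\;\mathcal{V}^K$ and "does not collapse on its own," and you then assert that the collapse "will require" the basis $[xu_\alpha y(xv_\alpha y)^{-1}\in E]$. But restating the target — that no proper subvariety of $\mathcal{V}^K$ has the same left trace — in terms of a basis of identities is not an argument; the assertion that $\mathcal{V}^K$ is the minimum of its $T_\ell$-class is precisely Pol\'{a}k's theorem that the paper imports ([Po2], Theorems 1.7(3) and 2.4(4)), and it rests on his structural description of $\mathcal{L}(\mathcal{CR})$, not on a formal manipulation of identities. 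Likewise for (ii): your reduction to the containment $\mathcal{V}\subseteq(\mathcal{V}^T)_K$ is valid, but that containment is the whole lemma. It genuinely requires the hypothesis $\mathcal{R}e\mathcal{B}\subseteq\mathcal{V}$ (for $\mathcal{V}=\mathcal{S}$ one has $\mathcal{V}^T=\mathcal{SG}$ while $(\mathcal{SG})_K=\mathcal{G}\neq\mathcal{SG}$, so no soft argument can prove it), and the proof in the literature is Kad'ourek's Proposition 8.2, which depends on his solution of the word problem for free bands of groups. Saying that the hypothesis "must enter" at this step identifies where the gap is; it does not close it. As it stands, parts (i) and (ii) of your proposal are unproven.
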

\begin{proof} (i) See [Po2], Theorem 2.4(4)  and [Po2] Theorem 1.7(3).  
(ii)  See [K]  Proposition 8.2. 
(iii)  See [R2], Proposition 3.4.
\end{proof}

\begin{lemma}
{\rm(i)}  The mapping
$$
\mathcal{V} \longrightarrow \mathcal{V}_K \qquad\quad
(\mathcal{V} \in \mathcal{L}(\mathcal{CR}))
$$
is a complete $\vee$-endomorphism of $\mathcal{L}(\mathcal{CR}).$ 

{\rm(ii)}  The mapping
$$
\mathcal{V} \longrightarrow \mathcal{V}^K \qquad\quad
(\mathcal{V} \in \mathcal{L}(\mathcal{CR}))
$$
is a complete endomorphism of $\mathcal{L}(\mathcal{CR}).$

{\rm (iii)} The mapping
$$
\mathcal{V} \longrightarrow \mathcal{V}_{T_{\ell}} \qquad\quad
(\mathcal{V} \in \mathcal{L}(\mathcal{CR}))
$$
is a complete endomorphism of $\mathcal{L}(\mathcal{CR}).$ \\

{\rm (iv)} The mapping
$$
\mathcal{V} \longrightarrow \mathcal{V}^{T_{\ell}} \qquad\quad
(\mathcal{V} \in \mathcal{L}(\mathcal{CR}))
$$
is a complete $\cap$-endomorphism of $\mathcal{L}(\mathcal{CR}).$

{\rm(v)}  The mapping
$$
\mathcal{V} \longrightarrow \mathcal{V}^{K_{\ell}} \qquad\quad
(\mathcal{V} \in \mathcal{L}(\mathcal{CR}))
$$
is a complete $\cap$-endomorphism of $\mathcal{L}(\mathcal{CR}).$ 

{\rm(vi)}  The mapping 
$$
\mathcal{V} \longrightarrow \mathcal{V}_{K_{\ell}} \qquad\quad
(\mathcal{V} \in \mathcal{L}(\mathcal{CR}))
$$
is a complete $\vee$-endomorphsm of $\mathcal{L}(\mathcal{CR}).$

\end{lemma}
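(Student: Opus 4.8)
The plan is to isolate one lattice-theoretic fact about complete congruences, read off four of the six assertions from it directly, and then treat the two remaining (full-endomorphism) claims as each needing a single extra ingredient. So the first step is to record the general principle. Let $\theta$ be a complete congruence on a complete lattice $L$; since its classes are intervals we may write them as $[a_\theta, a^\theta]$ with $a_\theta = \bigwedge(a\theta)$ and $a^\theta = \bigvee(a\theta)$, and both of these lie in $a\theta$ precisely because $\theta$ respects arbitrary meets and joins. I would then show that the top map $\phi\colon a \mapsto a^\theta$ is a complete $\cap$-endomorphism and, dually, the bottom map $\psi\colon a \mapsto a_\theta$ is a complete $\vee$-endomorphism.

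For $\phi$ the argument is that it is a closure operator which respects $\theta$. It is inflationary since $a \le a^\theta$; it is idempotent since $a^\theta\,\theta\,a$ forces $(a^\theta)^\theta = a^\theta$; and it is monotone, for if $a \le b$ then $a\,\theta\,a^\theta$ gives $b = a \vee b \;\theta\; a^\theta \vee b$, so $a^\theta \le a^\theta \vee b \le b^\theta$. Now for any family $\{a_i\}$, completeness of $\theta$ yields $\bigwedge_i a_i \;\theta\; \bigwedge_i a_i^\theta$, hence $\phi\bigl(\bigwedge_i a_i\bigr) = \phi\bigl(\bigwedge_i a_i^\theta\bigr)$; writing $c = \bigwedge_i a_i^\theta$, monotonicity and idempotence give $\phi(c) \le \phi(a_j^\theta) = a_j^\theta$ for every $j$, so $\phi(c) \le c \le \phi(c)$ and therefore $\phi\bigl(\bigwedge_i a_i\bigr) = \bigwedge_i a_i^\theta$. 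The statement for $\psi$ is exactly dual (a kernel operator preserving arbitrary joins). Since $K$, $T_\ell$ and $K_\ell$ are complete congruences on $\mathcal{L}(\mathcal{CR})$, this principle settles four parts at once: the $\phi$-statement gives $\mathcal{V}\mapsto\mathcal{V}^{T_\ell}$ in (iv) and $\mathcal{V}\mapsto\mathcal{V}^{K_\ell}$ in (v) as complete $\cap$-endomorphisms, and the $\psi$-statement gives $\mathcal{V}\mapsto\mathcal{V}_K$ in (i) and $\mathcal{V}\mapsto\mathcal{V}_{K_\ell}$ in (vi) as complete $\vee$-endomorphisms.

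It remains to upgrade (ii) and (iii) to full endomorphisms, and this is where the main obstacle lies. For (ii), the $\phi$-statement already gives that $\mathcal{V}\mapsto\mathcal{V}^K$ preserves arbitrary meets, while monotonicity together with completeness of $K$ gives $\bigvee_i \mathcal{V}_i^{K} \le \bigl(\bigvee_i \mathcal{V}_i\bigr)^{K} = \bigl(\bigvee_i \mathcal{V}_i^{K}\bigr)^{K}$; the reverse inclusion is therefore equivalent to the assertion that the image of $\phi$ — the $K$-saturated varieties — is closed under arbitrary joins. Dually, (iii) reduces via the $\psi$-statement to showing that the varieties of the form $\mathcal{V}_{T_\ell}$ are closed under arbitrary meets. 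Neither closure property follows from the complete-congruence property alone, so each must be supplied separately. I would establish the one for (ii) from the concrete basis $\mathcal{V}^K = [\,xu_\alpha y(xv_\alpha y)^{-1} \in E\,]$ recorded earlier, verifying that a join of varieties each defined by identities of this shape is again $K$-saturated, and the one for (iii) from the structural description of the $\ltr$-operator (equivalently, cite the relevant results of [Po2] and [PT]). The genuine difficulty is that joins of varieties do not correspond to unions of identity bases, so confirming that the join of $K$-closed varieties is again $K$-closed — rather than merely contained in its $K$-closure — is the step that requires the real work.
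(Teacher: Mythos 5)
Your proposal is correct, and it is organized rather differently from the paper, which for parts (i)--(iv) simply cites the literature ([PR88] for (i), [Po] for (ii), [Pa] for (iii), [PR90] for (iv)) and only argues from first principles for (v) and (vi), where it invokes exactly the fact you formalize: that $K_{\ell}$ is a complete congruence. Your general lemma -- that for any complete congruence $\theta$ on a complete lattice the top map $a\mapsto a^{\theta}$ is a complete $\cap$-endomorphism and the bottom map $a\mapsto a_{\theta}$ is a complete $\vee$-endomorphism -- is proved correctly (the monotonicity, idempotence and the $\phi(c)=c$ computation all check out), and it cleanly subsumes (i), (iv), (v) and (vi) in one stroke; this is more self-contained and more illuminating than the paper's treatment, since it makes visible that these four assertions carry no semigroup-theoretic content beyond the completeness of $K$, $T_{\ell}$ and $K_{\ell}$. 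Your reduction of (ii) and (iii) is also the right one: preservation of joins by $\mathcal{V}\mapsto\mathcal{V}^{K}$ is equivalent to the class of $K$-saturated varieties being closed under arbitrary joins, and dually for (iii). The one caution concerns your suggested route for supplying that last fact ``from the concrete basis'' of Lemma 2.5: since the join of varieties is not presented by any union of identity bases, there is no direct way to verify that a join of varieties defined by identities of the form $xu_{\alpha}y(xv_{\alpha}y)^{-1}\in E$ is again of that form; this closure property is precisely Pol\'{a}k's theorem and genuinely requires his machinery (and Pastijn's for (iii)). You flag this yourself as ``the step that requires the real work,'' and falling back on the citations to [Po] and [Pa] at that point puts you in exactly the same position as the paper, so nothing is lost -- but the basis-manipulation alternative should not be presented as a viable substitute for those references.
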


\begin{proof}
(i) See Petrich/Reilly [PR88].  (ii)  See Pol\'{a}k [Po].  (iii) See Pastijn [Pa].  (iv) See Petrich and 
Reilly [PR90].  (v) and (vi) These follow immediately from the fact that $K_{\ell}$ is a complete congruence on $\mathcal{L}(\mathcal{CR}).$
\end{proof}

We gather here a few useful observations concerning the operators related to 
the $K_{\ell}$ and $K_r$ relations.

\begin{lemma}
Let $\mathcal{V} \in [\mathcal{S}, \mathcal{CR}].$
\begin{enumerate}
\item[{\rm (i)}] $(\mathcal{V}^{K_{\ell}})_{T_r} = \mathcal{V}^{K_{\ell}}$ and $
(\mathcal{V}^{K_{\ell}})_{K_{r}} = \mathcal{V}^{K_{\ell}}$.
\item[{\rm (ii)}] $\mathcal{V}^K = \bigvee\limits_{\sigma \in \Theta^1}
\mathcal{V}^{\sigma (K_{\ell},K_r)}$.
\item[{\rm (iii)}] $\mathcal{V} \subset \mathcal{V}^K
\,\Longleftrightarrow\, \mbox{either}\;\; \mathcal{V}\neq
\mathcal{V}^{K_{\ell}} \;\;\mbox{or}\;\; \mathcal{V} \neq
\mathcal{V}^{K_r}$.
\end{enumerate}
\end{lemma}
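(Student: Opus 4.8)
The plan is to prove the three parts of Lemma 2.9 by exploiting the characterizations and endomorphism properties established in Lemmas 2.6 and 2.7, together with the two fundamental facts $K\cap T = K_\ell\cap K_r = \epsilon$ and $K = $ (in interval terms) the intersection structure $K_\ell = K\cap T_\ell$, $K_r = K\cap T_r$. Throughout I work in the lattice of fully invariant congruences (equivalently $\mathcal{L}(\mathcal{CR})$), where each relation $P$ has classes that are intervals $[\mathcal{V}_P,\mathcal{V}^P]$, and I use freely that $K$, $K_\ell$, $K_r$, $T_\ell$, $T_r$ are complete congruences.

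For part (i), I would argue as follows. Since $K_\ell = K\cap T_\ell$ and $K_r = K\cap T_r$, the class $\mathcal{V}^{K_\ell}$ is a $K_\ell$-class, hence in particular a $T_\ell$-class restricted appropriately; the point is that $\mathcal{V}^{K_\ell}$ is already saturated under $T_r$ and under $K_r$. Concretely, to show $(\mathcal{V}^{K_\ell})_{T_r} = \mathcal{V}^{K_\ell}$ it suffices to show that the top of the $K_\ell$-interval is fixed by the lower $T_r$-operator, i.e. that no proper $T_r$-descent is possible from $\mathcal{V}^{K_\ell}$ that stays within the same $K_\ell$-class — but since $K_\ell\cap K_r = \epsilon$ forces $T_r$ and $K_\ell$ to interact trivially on the relevant interval, the equality follows. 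I expect the cleanest route is to invoke Lemma 2.6(iii), $\mathcal{V}^\sigma\cap\mathcal{V}^K = \mathcal{V}^{\sigma(K_\ell,K_r)}$, with suitable choices of $\sigma$ (taking $\sigma = T_r$ and $\sigma$ a word beginning with $T_\ell$), which translates statements about the $T$-operators into statements about the $K$-decorated operators and makes the idempotency-type identities $(\mathcal{V}^{K_\ell})_{T_r} = \mathcal{V}^{K_\ell}$ and $(\mathcal{V}^{K_\ell})_{K_r} = \mathcal{V}^{K_\ell}$ fall out from the analogous facts in Lemma 2.6(i) for the plain $T$-operators.

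For part (ii), the identity $\mathcal{V}^K = \bigvee_{\sigma\in\Theta^1}\mathcal{V}^{\sigma(K_\ell,K_r)}$ is the heart of the lemma. My plan is to use Lemma 2.6(iii) in the form $\mathcal{V}^{\sigma(K_\ell,K_r)} = \mathcal{V}^\sigma\cap\mathcal{V}^K$, so that the right-hand join becomes $\bigvee_{\sigma}(\mathcal{V}^\sigma\cap\mathcal{V}^K)$. Since $\mathcal{V}\longrightarrow\mathcal{V}^K$ is a complete endomorphism (Lemma 2.7(ii)), $K$ distributes over the join, and the claim reduces to showing $\bigl(\bigvee_\sigma \mathcal{V}^\sigma\bigr)\cap\mathcal{V}^K = \mathcal{V}^K$, i.e. that $\mathcal{V}^K \subseteq \bigvee_{\sigma\in\Theta^1}\mathcal{V}^\sigma$. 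The reverse containment of each summand in $\mathcal{V}^K$ is automatic since $\mathcal{V}^{\sigma(K_\ell,K_r)}\subseteq\mathcal{V}^K$ (each is $\mathcal{V}^\sigma\cap\mathcal{V}^K$), so only the upper bound needs the structural input. Here I would lean on the known fact that the $T$-operators $\mathcal{V}\longrightarrow\mathcal{V}^\sigma$ sweep out everything up to $\mathcal{V}^T$ and its iterates, and that $T_\ell\cap T_r = T$ with $K\cap T=\epsilon$ guarantees $\mathcal{V}^K\subseteq\mathcal{V}^{T}$-augmented joins collapse correctly; the main obstacle is verifying that the join over the (infinite) set $\Theta^1$ genuinely reaches the top $\mathcal{V}^K$ rather than a proper subvariety, which is exactly where completeness of the congruence $K$ and the explicit bases in Lemma 2.5 are needed.

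For part (iii), I would prove the contrapositive-style equivalence directly. The forward direction: if $\mathcal{V}=\mathcal{V}^{K_\ell}$ and $\mathcal{V}=\mathcal{V}^{K_r}$ simultaneously, then $\mathcal{V}$ is fixed by both the $K_\ell$- and $K_r$-operators, and by part (ii) together with $K_\ell\cap K_r=\epsilon$ this forces $\mathcal{V}=\mathcal{V}^K$, contradicting $\mathcal{V}\subset\mathcal{V}^K$; hence at least one of the two inequalities $\mathcal{V}\neq\mathcal{V}^{K_\ell}$, $\mathcal{V}\neq\mathcal{V}^{K_r}$ must hold. For the converse, if (say) $\mathcal{V}\neq\mathcal{V}^{K_\ell}$, then since $\mathcal{V}\subseteq\mathcal{V}^{K_\ell}\subseteq\mathcal{V}^K$ (as $K_\ell\subseteq K$ forces the $K_\ell$-class to sit inside the $K$-class) the strict inequality $\mathcal{V}\subsetneq\mathcal{V}^{K_\ell}$ propagates to $\mathcal{V}\subset\mathcal{V}^K$. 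The delicate point is the forward direction's use of part (ii): I must argue that being a fixed point of both decorated operators, combined with the join formula, pins $\mathcal{V}$ to the top of its $K$-class, and this is where I expect to spend the most care, since it is precisely the interaction of the two complete congruences $K_\ell$ and $K_r$ whose meet is trivial.
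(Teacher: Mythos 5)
Your part (iii) and the general shape of part (i) are close to the paper's argument. For (i) the paper writes $\mathcal{V}^{K_{\ell}} = \mathcal{V}^K \cap \mathcal{V}^{T_{\ell}}$ (Lemma 2.6(iii) with $\sigma = T_{\ell}$), applies the operator $(\,\cdot\,)_{T_r}$ to both factors using the fact that it preserves intersections (the dual of Lemma 2.7(iii)), and invokes Lemma 2.6(i) to see that each factor is fixed; the second identity $(\mathcal{V}^{K_{\ell}})_{K_r} = \mathcal{V}^{K_{\ell}}$ then follows from the sandwich $\mathcal{V}^{K_{\ell}} = (\mathcal{V}^{K_{\ell}})_{T_r} \subseteq (\mathcal{V}^{K_{\ell}})_{K_r} \subseteq \mathcal{V}^{K_{\ell}}$, which uses $K_r \subseteq T_r$. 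You should replace your heuristic appeal to ``$K_{\ell}\cap K_r=\epsilon$ forces $T_r$ and $K_{\ell}$ to interact trivially'' --- which proves nothing --- by this explicit computation, and you omit both the intersection-preservation step and the sandwich step, which are where the work actually happens. For (iii) your argument is essentially the paper's: if $\mathcal{V}$ is fixed by both decorated operators then $\mathcal{V}^{\sigma(K_{\ell},K_r)}=\mathcal{V}$ for every $\sigma$, and (ii) collapses $\mathcal{V}^K$ to $\mathcal{V}$; the converse is the trivial containment $\mathcal{V}\subseteq\mathcal{V}^{K_{\ell}}\subseteq\mathcal{V}^K$.

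The genuine gap is in part (ii), which the paper does not prove at all but quotes from [R2], Corollary 3.8. Your reduction rewrites the right-hand side as $\bigvee_{\sigma}(\mathcal{V}^{\sigma}\cap\mathcal{V}^K)$ and then asserts this equals $\bigl(\bigvee_{\sigma}\mathcal{V}^{\sigma}\bigr)\cap\mathcal{V}^K$, attributing the step to the fact that $\mathcal{V}\longrightarrow\mathcal{V}^K$ is a complete endomorphism. That property says the operator commutes with arbitrary joins and meets; it does not license the infinite distributive law $\bigvee_{\sigma}(\mathcal{X}_{\sigma}\cap\mathcal{Y}) = \bigl(\bigvee_{\sigma}\mathcal{X}_{\sigma}\bigr)\cap\mathcal{Y}$, and $\mathcal{L}(\mathcal{CR})$ is not distributive (only modularity is available). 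What completeness of $K$ does give is that each $\mathcal{V}^{\sigma(K_{\ell},K_r)}$ lies in the interval $[\mathcal{V},\mathcal{V}^K]$ and hence so does the join --- i.e.\ the join is $K$-related to $\mathcal{V}$ --- but not that it reaches the top $\mathcal{V}^K$. You flag this as ``the main obstacle'' yourself, but gesturing at Lemma 2.5 and completeness does not close it: this is precisely the substantive content of the cited result in [R2], and without it your proof of (ii), and therefore of the forward direction of (iii), is incomplete.
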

\begin{proof}
(i) By Lemma 2.6(ii) with $\sigma = K_{\ell}$, we have $\mathcal{V}^{K_{\ell}} = \mathcal{V}^K 
\cap \mathcal{V}^{T_{\ell}}$.  Then \\[-0.2cm]
\begin{eqnarray*}
(\mathcal{V}^{K_{\ell}})_{T_{r}} &=& (\mathcal{V}^K\cap \mathcal{V}^{T_{\ell}})_{T_r}\\
&=& (\mathcal{V}^K)_{T_r} \cap (\mathcal{V}^{T_{\ell}})_{T_r}\;\mbox{by the dual of Lemma 2.7(iii)}\\
&=& \mathcal{V}^K \cap \mathcal{V}^{T_{\ell}}\;\;\;\;\mbox{by Lemma 2.6(i)}\\
&=& \mathcal{V}^{K_{\ell}},
\end{eqnarray*}
which establishes the first claim.  In addition, 
$$
\mathcal{V}^{K_{\ell}} = (\mathcal{V}^{K_{\ell}})_{T_r} \subseteq (\mathcal{V}^{K_{\ell}})_{K_r} 
\subseteq \mathcal{V}^{K_{\ell}}.
$$
Therefore $(\mathcal{V}^{K_{\ell}})_{K_r} = \mathcal{V}^{K_{\ell}}$ as claimed.\\

\noindent
(ii) See [R2], Corollary 3.8. \\

\noindent
(iii) If $\mathcal{V} = \mathcal{V}^{K_{\ell}} = \mathcal{V}^{K_r},$
then $\mathcal{V}^{\sigma (K_{\ell}, K_r)} = \mathcal{V}$ for all
$\sigma \in \Theta^1$ so that, by (ii), $\mathcal{V}^K = \mathcal{V}.$
Thus the direct claim holds. Since $\mathcal{V}^{K_{\ell}} \cup
\mathcal{V}^{K_r} \subseteq \mathcal{V}^K,$  the converse is obvious.
\end{proof}

In [PT], Pastijn and Trotter show that if $\mathcal{V}$ is a proper
subvariety of $\mathcal{CR},$ then so also are $\mathcal{V}^K$ and
$\mathcal{V}^T.$ An interesting source of varieties for which
$\mathcal{V} \subset \mathcal{V}^K$ is provided below.  But first we 
need a simple technical lemma.

\begin{lemma}
Let $S \in \mathcal{CR}$ be such that $S/ \mathcal{D}$ has a least
element $D.$ For $P \in \{\mathcal{H,L,R}\},$ let $\rho_P$ be defined on $S$
by
$$
a\;\rho_P\:b \;\Longleftrightarrow\; \mbox{either}\;\; a=b
\;\;\mbox{or}\;\; a,b\in D \;\;\mbox{and}\;\; a\;P\;b\,.
$$
Then $\rho_H$ {\rm (}respectively, $\rho_L$ and $\rho_R)$ is a congruence
on $S$ such that $\rho_H \subseteq \mathcal{H},$ $D/ \rho_H \in
\mathcal{RB}$ {\rm (}respectively, $\rho_L \subseteq \mathcal{L},$ $D/
\rho_L\in \mathcal{RZ}$ and $\rho_R \subseteq \mathcal{R},$ $D/ \rho_R
\in \mathcal{LZ})$.
\end{lemma}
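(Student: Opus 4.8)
The plan is to identify the distinguished $\mathcal{D}$-class $D$ concretely and then check the two substantive assertions (that each $\rho_P$ is a congruence, and that the quotient has the right form). Since $S\in\mathcal{CR}$ we have $\mathcal{D}=\mathcal{J}$ and $S$ is a semilattice of its completely simple $\mathcal{D}$-classes (see [PR99]); the hypothesis that the structure semilattice $S/\mathcal{D}$ has a least element $D$ says precisely that this semilattice has a zero, whence $D$ is a two-sided ideal of $S$ — in fact its kernel — and is itself completely simple. Fix a Rees representation $D=\mathcal{M}(G;I,\Lambda;Q)$. That each $\rho_P$ is an equivalence relation is routine: off the diagonal $\rho_P$ relates only pairs lying in $D$, where it agrees with the restriction to $D$ of the equivalence relation $P$, so reflexivity, symmetry and transitivity are inherited; the inclusions $\rho_H\subseteq\mathcal{H}$, $\rho_L\subseteq\mathcal{L}$, $\rho_R\subseteq\mathcal{R}$ are immediate from the definition.

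For compatibility, suppose $a\,\rho_P\,b$ with $a\neq b$, so that $a,b\in D$ and $a\,P\,b$; since $D$ is an ideal, $ca,cb,ac,bc\in D$ for every $c\in S$. Half of the work is free, because $\mathcal{L}$ is always a right congruence and $\mathcal{R}$ always a left congruence on any semigroup: thus $a\,\mathcal{L}\,b$ gives $ac\,\mathcal{L}\,bc$, and $a\,\mathcal{R}\,b$ gives $ca\,\mathcal{R}\,cb$, automatically. The remaining, genuinely non-automatic directions I would handle with the idempotent identities $a=a^0a$ and $a=aa^0$, valid in any completely regular semigroup. Writing $ca=(ca^0)a$ with $ca^0\in D$ exhibits $ca$ as a product of two elements of $D$, so by the Rees multiplication it lies in the same $\mathcal{L}$-class of $D$ as $a$; applying this to both $a$ and $b$ and using $a\,\mathcal{L}\,b$ yields $ca\,\mathcal{L}\,cb$. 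The symmetric rewriting $ac=a(a^0c)$ with $a^0c\in D$ gives $ac\,\mathcal{R}\,bc$ when $a\,\mathcal{R}\,b$. For $\mathcal{H}=\mathcal{L}\cap\mathcal{R}$ one simply combines the two sides. This shows $\rho_L,\rho_R,\rho_H$ are congruences.

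The same device $a=a^0a=aa^0$ shows that the Green relations of $S$ restricted to $D$ coincide with those computed inside $D$ (if $b=sa$ with $s\in S^1$, then $b=(sa^0)a\in Da$), so the relation $P$ appearing in the definition of $\rho_P$ is exactly $P$ computed in the Rees semigroup $D$. Hence $D/\rho_P\cong D/(P|_D)$ is the familiar structure quotient of a completely simple semigroup: $D/\mathcal{H}\cong I\times\Lambda$ with $(i,\lambda)(j,\mu)=(i,\mu)$, a rectangular band, so $D/\rho_H\in\mathcal{RB}$; $D/\mathcal{L}\cong\Lambda$ with $\lambda\mu=\mu$, so $D/\rho_L\in\mathcal{RZ}$; and $D/\mathcal{R}\cong I$ with $ij=i$, so $D/\rho_R\in\mathcal{LZ}$. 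The main obstacle is the single step flagged above — compatibility with left (respectively right) multiplication by arbitrary elements of $S$ for $\mathcal{L}$ (respectively $\mathcal{R}$) — which fails for Green relations in general and is rescued here precisely by the complete regularity of $D$ together with $D$ being an ideal, through the rewriting $ca=(ca^0)a$.
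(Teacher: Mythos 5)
Your proof is correct, and the paper itself offers no argument here --- it dismisses the lemma with ``The proof is a simple exercise'' --- so your write-up simply supplies the standard details the author had in mind: $D$ is the completely simple kernel, $\mathcal{L}$ (resp.\ $\mathcal{R}$) is automatically a right (resp.\ left) congruence, and the non-automatic direction is rescued by the rewriting $ca=(ca^{0})a$ with $ca^{0}\in D$ together with the Rees coordinates of $D$. All steps check out, including the identification of $\mathcal{L}_S|_D$ with $\mathcal{L}$ computed inside $D$ and the descriptions of the quotients $D/\mathcal{H}\cong I\times\Lambda$, $D/\mathcal{L}\cong\Lambda$, $D/\mathcal{R}\cong I$; there is nothing to object to.
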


\begin{proof}
The proof is a simple exercise.
\end{proof}

\begin{lemma}
Let $\mathcal{V} = \mathcal{V}^T \in [\mathcal{S}, \mathcal{CR}).$  Let 
$S \in \mathcal{CR}\backslash \mathcal{V}$ and $\langle S \rangle$ denote  the 
variety generated by $S$.  \\

\noindent
{\rm (i)} Either $\langle S \rangle \cap \mathcal{V}^{K_{\ell}} \not\subseteq \mathcal{V}$ or 
$\langle S \rangle \cap \mathcal{V}^{K_r} \not\subseteq \mathcal{V}$.\\ 

\noindent
{\rm (ii)} Either $\mathcal{V} \neq \mathcal{V}^{K_{\ell}}$ or $\mathcal{V} \neq \mathcal{V}^{K_r}$.\\

\noindent
{\rm (iii)} $\mathcal{V} \neq \mathcal{V}^K$.
\end{lemma}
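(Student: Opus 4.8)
The plan is to prove (i) in full and then read off (ii) and (iii) with almost no extra work. For (iii) from (ii): since $\mathcal{V}$ always lies in its own $K$-class we have $\mathcal{V}\subseteq\mathcal{V}^K$, and Lemma 2.8(iii) asserts $\mathcal{V}\subset\mathcal{V}^K$ precisely when $\mathcal{V}\neq\mathcal{V}^{K_\ell}$ or $\mathcal{V}\neq\mathcal{V}^{K_r}$; so (ii) yields (iii) immediately. For (ii) from (i): likewise $\mathcal{V}\subseteq\mathcal{V}^{K_\ell}$ and $\mathcal{V}\subseteq\mathcal{V}^{K_r}$, so were $\mathcal{V}=\mathcal{V}^{K_\ell}$ we would get $\langle S\rangle\cap\mathcal{V}^{K_\ell}=\langle S\rangle\cap\mathcal{V}\subseteq\mathcal{V}$; hence whichever alternative of (i) holds forces the corresponding inequality in (ii).

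For (i) I would first reduce to the hypotheses of Lemma 2.10. Since $\langle S\rangle$ is generated by its subdirectly irreducible members and $S\notin\mathcal{V}$, at least one such member lies outside $\mathcal{V}$; replacing $S$ by it, I may assume $S$ is subdirectly irreducible, and hence (its monolith being carried by a smallest ideal) that $S/\mathcal{D}$ has a least element $D$. Lemma 2.10 then yields congruences $\rho_L\subseteq\mathcal{L}$ with $D/\rho_L\in\mathcal{RZ}$ and $\rho_R\subseteq\mathcal{R}$ with $D/\rho_R\in\mathcal{LZ}$. Both $S/\rho_L$ and $S/\rho_R$ are homomorphic images of $S$ and so lie in $\langle S\rangle$.

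The first substantive step is to show at least one of these quotients escapes $\mathcal{V}$. Here $\rho_L\cap\rho_R$ is a congruence contained in $\mathcal{L}\cap\mathcal{R}=\mathcal{H}$, hence in the largest congruence $\mathcal{H}^0$ inside $\mathcal{H}$, so $S/(\rho_L\cap\rho_R)$ maps onto $S/\mathcal{H}^0$. Since $\mathcal{V}=\mathcal{V}^T$ and, by Lemma 2.5, $\mathcal{V}^T=\{X:X/\mathcal{H}^0\in\mathcal{V}\}$, the hypothesis $S\notin\mathcal{V}$ gives $S/\mathcal{H}^0\notin\mathcal{V}$ and therefore $S/(\rho_L\cap\rho_R)\notin\mathcal{V}$. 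As the map $S\to S/\rho_L\times S/\rho_R$ has kernel $\rho_L\cap\rho_R$, it induces an embedding $S/(\rho_L\cap\rho_R)\hookrightarrow S/\rho_L\times S/\rho_R$; if both factors lay in $\mathcal{V}$ so would their product and the subalgebra $S/(\rho_L\cap\rho_R)$, a contradiction. Hence $S/\rho_L\notin\mathcal{V}$ or $S/\rho_R\notin\mathcal{V}$.

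The remaining, and main, step is to place the two quotients in the relevant kernel classes: I claim $S/\rho_R\in\mathcal{V}^{K_\ell}$ and $S/\rho_L\in\mathcal{V}^{K_r}$, the left-zero bottom matching $K_\ell$ and the right-zero bottom matching $K_r$ (consistent with $\mathcal{T}^{K_\ell}=\mathcal{LZ}$, $\mathcal{T}^{K_r}=\mathcal{RZ}$). Granting this, the factor that escapes $\mathcal{V}$ witnesses one of the two alternatives of (i), completing the proof. To verify the claim I would use the identity bases of Lemma 2.5: writing $\mathcal{V}=[u_\alpha=v_\alpha]$, one checks that $S/\rho_R$ satisfies $xu_\alpha y(xv_\alpha y)^{-1}\in E$ together with $xu_\alpha=xu_\alpha(xv_\alpha)^0$ and $xv_\alpha=xv_\alpha(xu_\alpha)^0$, i.e. the defining identities of $\mathcal{V}^K\cap\mathcal{V}^{T_\ell}=\mathcal{V}^{K_\ell}$, using that $\rho_R$ alters $S$ only on the least $\mathcal{D}$-class $D$ and only in the $\mathcal{R}$-direction, while $\mathcal{V}=\mathcal{V}^T$ controls the behaviour above $D$; the case $S/\rho_L\in\mathcal{V}^{K_r}$ is dual. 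This identity verification — reconciling the one-sided collapse of the bottom $\mathcal{D}$-class with the kernel condition defining $\mathcal{V}^K$ — is where I expect the real difficulty to lie.
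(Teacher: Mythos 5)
Your reductions of (ii) and (iii) to (i) are fine, and your device for showing that at least one of $S/\rho_L$, $S/\rho_R$ escapes $\mathcal{V}$ --- namely $\rho_L\cap\rho_R\subseteq\mathcal{H}^0$, so $S/(\rho_L\cap\rho_R)$ maps onto $S/\mathcal{H}^0\notin\mathcal{V}$ and embeds in $S/\rho_L\times S/\rho_R$ --- is correct and in fact tidier than the paper's case split via $\rho_H$. But the final, and as you yourself suspected the decisive, step is where the argument breaks: the claim that $S/\rho_L\in\mathcal{V}^{K_r}$ and $S/\rho_R\in\mathcal{V}^{K_\ell}$ is simply false for a general $S$ with least $\mathcal{D}$-class $D$. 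Since $\rho_L$ and $\rho_R$ only disturb $S$ inside $D$, any obstruction to membership in $\mathcal{V}$ that lives strictly above $D$ survives in both quotients. Concretely, take $\mathcal{V}=\mathcal{SG}$ (so $\mathcal{V}=\mathcal{V}^T$, $\mathcal{V}^{K_\ell}=\mathcal{LRO}$, $\mathcal{V}^{K_r}=\mathcal{RRO}$) and let $S$ be a band with three $\mathcal{D}$-classes whose \emph{middle} class is a proper $2\times 2$ rectangular band: that middle class persists untouched in $S/\rho_L$ and $S/\rho_R$, so neither quotient is a left or right regular orthogroup, and your claimed memberships fail. What is really needed is that the Rees quotient $S/D$ lies in $\mathcal{V}$; then $S/\rho_L$ is a right-zero ideal extension of a member of $\mathcal{V}$ and lands in $\mathcal{V}^{K_r}$ (this is exactly the $\mathcal{RZ}\circ\mathcal{V}$ computation), and dually for $\rho_R$. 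Your appeal to ``$\mathcal{V}=\mathcal{V}^T$ controls the behaviour above $D$'' does not deliver this: $\mathcal{V}=\mathcal{V}^T$ constrains $S$ relative to $S/\mathcal{H}^0$, not relative to the Rees quotient $S/D$.

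The paper manufactures $S/D\in\mathcal{V}$ by a different reduction, which your setup cannot reproduce: one first passes to a finitely generated counterexample (hence one with finitely many $\mathcal{D}$-classes, so a least class exists automatically), and then to one with the \emph{minimal} number of $\mathcal{D}$-classes; minimality forces $S/D\in\mathcal{V}$, since a failed identity in $S/D$ would have its witnesses in some class $C\neq D$ and the union of the classes above $C$ would be a smaller counterexample. Your alternative reduction to a subdirectly irreducible member of $\langle S\rangle$ has its own unsupported step --- the parenthetical ``its monolith being carried by a smallest ideal'' presupposes the smallest ideal whose existence is exactly what you need, and a subdirectly irreducible completely regular semigroup is not obviously equipped with a least $\mathcal{D}$-class --- and, more importantly, even granting it you get no control over $S/D$. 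To repair the proof, replace the subdirect-irreducibility reduction by the finitely-generated/minimal-$\mathcal{D}$-class-count reduction and insert the argument that $S/D\in\mathcal{V}$; your product trick and the $\mathcal{RZ}\circ\mathcal{V}$ membership computation then combine correctly.
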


\begin{proof}
\hspace*{-0.1cm}Since $S \in \mathcal{CR}\backslash \mathcal{V}$, 
there must exist $u = u(x_1,\ldots ,x_n), v = v(x_1,\ldots ,x_n)\break
\in U_X$ such that $\mathcal{V}$ satisfies the identity $u=v$ but $S$ does
not. Consequently there exist $a_1,\ldots ,a_n \in S$ such that $u(a_1,
\ldots ,a_n) \neq v(a_1,\ldots ,a_n).$ The completely regular subsemigroup
of $S$ generated by $a_1,\ldots ,a_n$ must also lie in $\mathcal{CR}
\backslash \mathcal{V}.$ Hence, we may assume that $S = \langle
a_1,\ldots ,a_n\rangle .$ It then follows that $S$ has only a finite
number of $\mathcal{D}$-classes. Thus there must exist a semigroup,
which we again take to be $S,$ which has the smallest possible number
of $\mathcal{D}$-classes with $S\not\in \mathcal{V}.$ Also $S$ must
have a least $\mathcal{D}$-class, $D$ say, in the semilattice
$S/ \mathcal{D}.$

Now $\mathcal{V}$ contains the variety $\mathcal{S}$ of semilattices and $\mathcal{V}$ satisfies
the identity $u=v.$ Therefore $u$ and $v$ must have the same content.
Hence $u(a_1,\ldots ,a_n)\;\,\mathcal{D}\;\,v(a_1,\ldots,a_n).$
Let $C = D_{u(a_1,\ldots ,a_n)}.$ If $C\neq D,$ then $T =
\bigcup \{D_a \mid C\leq D_a\}$ will be a subsemigroup of $S$ with
fewer $\mathcal{D}$-classes and $T \not\in \mathcal{V}.$ This
contradicts the choice of $S.$ Hence $C=D.$ \\[-0.15cm]

\noindent
\emph{Case\/}: \,$u(a_1,\ldots ,a_n)\;\,\mathcal{H}\;\,v(a_1,\ldots
,a_n).$ Let $\rho_H$ be defined as in Lemma 2.9.  \,If $S/ \rho_H \in \mathcal{V}$ 
then since $\rho_H \subseteq
\mathcal{H},$ we have $S\in \mathcal{V}^T = \mathcal{V},$ a
contradiction. Hence $S/ \rho_H \not\in \mathcal{V}$ and there must
be another identity $p=q,$ where $p=p(x_1,\ldots ,x_m), q=
q(x_1,\ldots ,x_m) \in U_X,$ that is satisfied by $\mathcal{V}$ but
not by $S/ \rho_H.$ Let $b_1,\ldots ,b_m \in S/ \rho_H$ be such that
$p(b_1,\ldots ,b_m) \neq q(b_1,\ldots ,b_m).$ As for $u(a_1,\ldots
,a_n),$ $v(a_1,\ldots ,a_n),$ we must have $p(b_1,\ldots ,b_m),
q(b_1,\ldots ,b_m) \in D/ \rho_H.$ Since the restriction of $\mathcal{H}$
to $D/ \rho_H$ is the identity relation, we must have that either
$p(b_1,\ldots ,b_m),$ $q(b_1,\ldots ,b_m)$ are not $\mathcal{L}$-related
or they are not $\mathcal{R}$-related. This situation is
covered by the next case.\\[-0.1cm]

\noindent
\emph{Case\/}: \,Either $(u(a_1,\ldots ,a_n), v(a_1,\ldots
,a_n)) \not\in \mathcal{L}$ or $(u(a_1,\ldots,a_n),
v(a_1,\ldots ,a_n))\break
\not\in \mathcal{R}.$ It suffices to consider
the case $(u(a_1,\ldots ,a_n), v(a_1,\ldots ,a_n)) \not\in
\mathcal{L}.$ Then\break
$(u(a_1,\ldots ,a_n), v(a_1,\ldots ,a_n))
\not\in \rho_L.$ Hence $S/ \rho _L \not\in \mathcal{V}.$ However,
$D/ \rho_L \in \mathcal{RZ}.$ If the Rees quotient $S/D \in \mathcal{V}$
then we have $S/ \rho_L \in (\mathcal{RZ} \circ \mathcal{V}) \backslash
\mathcal{V},$ that is, $\mathcal{V}^{K_r} \neq \mathcal{V}.$ It
remains to show that $S/D \in \mathcal{V}.$ By way of
contradiction, suppose that $S/D \not\in \mathcal{V}.$ Then there
exist $p = p(x_1,\ldots ,x_m), q= q(x_1,\ldots ,x_m) \in U_X$
such that $\mathcal{V}$ satisfies the identity $p=q,$ but $S/D$ does 
not. Let $c_1,\ldots c_m \in S/D$ be
such that $p(c_1,\ldots ,c_m) \neq q(c_1,\ldots ,c_m).$ As previously,
$p(c_1,\ldots, c_m)$ and $q(c_1,\ldots,c_m)$ must lie in the same
$\mathcal{D}$-class, $C$ say. Moreover $C\neq D$ since $D$ is the
zero element in $S/ \mathcal{D}.$ But then $T = \bigcup \{D_a \mid
C\leq D\}$ has fewer $\mathcal{D}$-classes than $S,$ contradicting
the choice of $S.$ Hence $S/D \in \mathcal{V}$ and $\langle S \rangle \cap 
\mathcal{V}^{K_r} \not\subseteq \mathcal{V}$.  In particular
$\mathcal{V}^{K_r} \neq \mathcal{V}.$\\

By duality, this proves parts (i) and (ii).  Part (iii) then follows trivially.
\end{proof}
\vskip1cm

\section{Dual varieties}
The concept of duality in semigroups is introduced in [CP] where the
dual semigroup is implicit, while the concept of the dual semigroup
is explicitly defined in [L].  This leads to the concepts of the mirror image 
of a word, dual identities and dual varieties.  It has played an important role 
in some recent papers by Petrich [Pe2007], [Pe2015a] on varieties of 
completely regular semigroups.  In these papers, Petrich 
defines the mirror image of a ``completely regular" word directly in 
the free completely regular 
semigroup thereby raising the awkward question as to whether the 
concept is well-defined.  Here we take a slightly different approach and 
take as our starting point a completely unambiguous definition of the
mirror image of a word in $Y^+$.  This incorporates the definitions 
introduced by Clifford and Preston [CP] and Lallement [L] and  
restricts to exactly what we want in $U_X$ and is mapped onto 
exactly the relation that we want in $CR_X = U_X/\zeta$ under 
the natural mapping.

\begin{definition}
Let $w = x_1x_2\cdots x_n \in Y^+$ where $x_1,x_2,\ldots ,x_n \in
Y.$ Let $\varphi :\, Y\longrightarrow Y$ be defined by
$$
x\varphi = \left\{ \begin{array}{lcl}
x & \;\; \mbox{if} \;\; & x\in X \\[0.2cm]
)^{-1} & \;\; \mbox{if} \;\; & x = ( \\[0.2cm]
( & \;\; \mbox{if} \;\; & x = \;\,)^{-1}\,.
\end{array} \right.
$$
\vskip0.2cm

\noindent
We define the \emph{\bf mirror image\/} of $w$ to be $\overline{w}
= \varphi(x_n) \varphi (x_{n-1}) \cdots \varphi (x_1).$
\end{definition}

For example, with $w = p(q(rs)^{-1} t)^{-1}u$ where $p,q,r,s,t,u
\in X$ we have $\overline{w} = u(t(sr)^{-1}q)^{-1}p.$ 

\begin{lemma}
Let $u,v,w \in Y^+.$
\begin{enumerate}
\item[{\rm (i)}] $\overline{uv} = \overline{v}\,\overline{u}$.
\item[{\rm (ii)}] $\overline{\overline{w}} = w,$ $\overline{(w)^{-1}} = (\overline{w})^{-1}$, 
$\;\;\overline{w^0}\;\,\zeta\,\; \overline{w}^0$.
\item[{\rm (iii)}] $w\in U_X \,\Longrightarrow\, \overline{w} \in
U_x$.
\item[{\rm (iv)}] $u,v \in U_X,$ $\;\; u\;\zeta \;v
\,\Longrightarrow\, \overline{u}\;\zeta \;\overline{v}$\vspace*{0.1cm}.
\end{enumerate}
\end{lemma}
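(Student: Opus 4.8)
The plan is to treat the four claims in the given order, since each later part uses the earlier ones. Parts (i) and (ii) are formal consequences of Definition 3.1. For (i), write $u = x_1\cdots x_m$ and $v = y_1\cdots y_n$; the block $y_1\cdots y_n$ occupies the tail of $uv$, so after reversing and applying $\varphi$ letterwise the images $\varphi(y_j)$ come first, and reading off the definition gives $\overline{uv} = \overline{v}\,\overline{u}$ at once. For the first two assertions of (ii) I would use that $\varphi$ is an involution on $Y$ (it interchanges $($ and $)^{-1}$ and fixes $X$): mirroring twice undoes the reversal and applies $\varphi^2 = \mathrm{id}$, giving $\overline{\overline{w}} = w$; and since $(w)^{-1}$ is literally the symbol $($ followed by $w$ followed by the symbol $)^{-1}$, reversing and applying $\varphi$ restores an outer $($ and $)^{-1}$ around the core $\overline{w}$, so $\overline{(w)^{-1}} = (\overline{w})^{-1}$.

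For the third assertion of (ii) (read for $w \in U_X$, where $\zeta$ lives), I would compute $\overline{w^0} = \overline{w(w)^{-1}} = \overline{(w)^{-1}}\,\overline{w} = (\overline{w})^{-1}\overline{w}$ using (i) and the part of (ii) just established, while $\overline{w}^{\,0} = \overline{w}(\overline{w})^{-1}$ by definition of $u^0$. These two words are $\zeta$-related because $\zeta$ contains the generating pair $(xx^{-1}, x^{-1}x)$ and is fully invariant: the endomorphism sending $x$ to $\overline{w}$ (legitimate since $\overline{w} \in U_X$ by (iii)) carries that pair to $\bigl(\overline{w}(\overline{w})^{-1}, (\overline{w})^{-1}\overline{w}\bigr)$, which by symmetry of $\zeta$ is exactly $\overline{w}^{\,0}\,\zeta\,\overline{w^0}$.

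For (iii) I would verify that $\overline{w}$ satisfies the three defining conditions of Lemma 2.1. Since $\varphi$ swaps $($ and $)^{-1}$, the equal-count condition transfers immediately, and the condition that $($ is never immediately followed by $)^{-1}$ transfers because such a forbidden adjacency in $\overline{w}$ would pull back, after undoing the reversal and $\varphi$, to a $($ immediately followed by $)^{-1}$ in $w$. The substantive point is the initial-segment condition: an initial segment of $\overline{w}$ is the mirror of a \emph{terminal} segment of $w$, so, writing $p(z)$ and $q(z)$ for the numbers of $($ and of $)^{-1}$ in a word $z$, I must show $q(S) \ge p(S)$ for every suffix $S$ of $w$. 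This is a short count: if $w = PS$ then $p(P) + p(S) = p(w) = q(w) = q(P) + q(S)$, while the prefix condition for $w$ gives $p(P) \ge q(P)$; subtracting yields $q(S) - p(S) = p(P) - q(P) \ge 0$.

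The heart of the lemma is (iv), and this is where I expect the main obstacle. The strategy is to put $\theta = \{(u,v) \in U_X \times U_X : \overline{u}\,\zeta\,\overline{v}\}$ (well defined by (iii)) and to show that $\theta$ is a fully invariant unary congruence containing the three defining pairs of $\zeta$; minimality of $\zeta$ then forces $\zeta \subseteq \theta$, which is precisely the assertion. That $\theta$ is an equivalence compatible with multiplication and with the operation $u \mapsto (u)^{-1}$ follows from (i) and (ii) together with the corresponding properties of $\zeta$, and the three generators lie in $\theta$ via the direct evaluations $\overline{x} = x$, $\overline{xx^{-1}x} = xx^{-1}x$, $\overline{(x^{-1})^{-1}} = (x^{-1})^{-1}$, $\overline{xx^{-1}} = x^{-1}x$, $\overline{x^{-1}x} = xx^{-1}$, combined with the symmetry of $\zeta$. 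The delicate step is full invariance, because the mirror map $\Phi \colon w \mapsto \overline{w}$ is an \emph{anti}-automorphism, not an endomorphism. My plan is: given an endomorphism $\psi$ of $U_X$, form $\psi' = \Phi\psi\Phi$ (using $\Phi^2 = \mathrm{id}$) and check, from the anti-homomorphism identities of (i) and (ii), that $\psi'$ is again a unary endomorphism with $\overline{\psi(w)} = \psi'(\overline{w})$ for all $w$. Then $u\,\theta\,v$ means $\overline{u}\,\zeta\,\overline{v}$, whence full invariance of $\zeta$ applied to $\psi'$ gives $\psi'(\overline{u})\,\zeta\,\psi'(\overline{v})$, i.e.\ $\overline{\psi(u)}\,\zeta\,\overline{\psi(v)}$, so $\psi(u)\,\theta\,\psi(v)$. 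This conjugation trick is the crux; the remaining verifications are routine bookkeeping with (i) and (ii).
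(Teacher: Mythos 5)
Your proposal is correct throughout. Parts (i) and (ii) are the same routine verifications the paper dismisses as self-evident (your derivation of $\overline{w^0}\,\zeta\,\overline{w}^{\,0}$ via the fully invariant image of the generating pair $(xx^{-1},x^{-1}x)$ under $x\mapsto\overline{w}$ is exactly the intended justification), and your part (iii) coincides with the paper's: the one non-trivial point is that condition (ii) of Lemma 2.1 for $\overline{w}$ translates into the statement that every final segment of $w$ contains at least as many occurrences of $)^{-1}$ as of $($, which your prefix/suffix count $q(S)-p(S)=p(P)-q(P)\ge 0$ establishes.

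For part (iv) you take a genuinely different route. The paper argues syntactically: it unwinds $u\,\zeta\,v$ into a derivation sequence $u=w_0,w_1,\dots,w_{n+1}=v$ in which consecutive terms differ by an endomorphic substitution instance of a generating identity placed in a context, $w_i=a_i\varphi_i(p)b_i$, $w_{i+1}=a_i\varphi_i(q)b_i$, and checks case by case that the mirror of each elementary step is again an elementary step (for the mirrored identity), so that $\overline{w}_i\,\zeta\,\overline{w}_{i+1}$ and transitivity finishes. You instead argue order-theoretically: the pullback $\theta=\{(u,v):\overline{u}\,\zeta\,\overline{v}\}$ is shown to be a fully invariant unary congruence containing the three generating pairs, whence $\zeta\subseteq\theta$ by minimality of $\zeta$. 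The two proofs rest on the same underlying facts --- that $\Phi:w\mapsto\overline{w}$ is an involutive anti-automorphism commuting with $(\,\cdot\,)^{-1}$, and that the generating set of $\zeta$ is (up to the symmetry of $\zeta$) closed under mirroring --- but your conjugation $\psi'=\Phi\psi\Phi$, together with the identity $\overline{\psi(w)}=\psi'(\overline{w})$, packages the full-invariance check once and for all, whereas the paper redistributes that same computation over the individual derivation steps via the elements $\overline{\varphi_i(x)}$. Your version is cleaner and avoids the case analysis over the three identities; the paper's version is more elementary in that it never appeals to the lattice-theoretic characterization of $\zeta$ as a least fully invariant congruence. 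Both are complete proofs.
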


\begin{proof}
(i),(ii) \,These are self-evident. \\[-0.1cm]

\noindent
(iii) \,It is straightforward to see that since $w$ satisfies the
conditions (i), (ii), (iii) of Lemma 2.1, so also does
$\overline{w}.$ The only point worth noting is that condition (ii)
of Lemma 2.1 for $w$ is equivalent to the number of occurrences of
$\;)^{-1}$ in any final segment of $w$ being at least as great as
the number of occurrences of $\,(.$ \\

\noindent
(iv) Now $\zeta$ is generated as a semigroup congruence from the identities  
$x=x(x)^{-1}x$, $\,x=((x)^{-1})^{-1}, \,x(x)^{-1}=(x)^{-1}x.$
Since $u\:\zeta \:v$, the identity $u=v$ can be formally derived
from the above listed identities.  
Thus there exists a sequence $u = w_0,w_1,\ldots ,
w_{n+1} = v\in U_X$ such that, for each $i=0,1,\ldots ,n$ there exists an
endomorphism $\varphi_i$ of $U_X,$ an identity $p = q$ (or $q = p$) in
$I$ and $a_i, b_i \in P^+$ such that
$$
w_i = a_i \varphi_i (p) b_i\,, \quad w_{i+1} =
a_i \varphi_i(q) b_i\,.
$$

\noindent
Now consider the case where $p=x,$ $q=x(x)^{-1}x.$ Then
$$
w_i = a_i \varphi_i(x) b_i\,, \quad w_{i+1} = a_i \varphi_i(x)
(\varphi_i(x))^{-1}\varphi_i(x) b_i
$$
and we have
$$
\overline{w}_i = \overline{b}_i \overline{\varphi_i(x)}
\overline{a}_i\,, \quad \overline{w}_{i+1} = \overline{b}_i
\overline{\varphi_i(x)}\; (\overline{\varphi_i(x)})^{-1}\;
\overline{\varphi_i(x)} \overline{a_i}\,.
$$

The other possibilities for the identity $p=q$ can be dealt with
similarly. Thus $\overline{w}_i\;\, \zeta \;\, \overline{w}_{i+1}$ for
$i = 0,1,\ldots ,n-1$ and therefore $\overline{u}=\overline{w}_0\;\,\zeta
\;\, \overline{w}_{n+1} = \overline{v}.$
\end{proof}

\begin{definition}
For any semigroup $(S, \;\cdot \;)$ we define its \emph{\bf dual} $\overline{S}$ to be
$(S, *)$ where $a*b = ba$ for all $a,b\in S.$ For any variety $\mathcal{V} \in
\mathcal{L}(\mathcal{CR})$ we define the \emph{\bf dual} of $\mathcal{V}$
to be $\overline{\mathcal{V}} = \{\overline{S} \mid S\in \mathcal{V}\}$ and
we say that $\mathcal{V}$ is self-dual if $\overline{\mathcal{V}} =
\mathcal{V}.$ We denote the class of all self-dual subvarieties of
$\mathcal{CR}$ by $\mathcal{SD}.$  For any equivalence relation $\rho$ on 
$U_X$, we define the dual $\overline{\rho}$ of $\rho$ by
$u \:\overline{\rho} \:v \Longleftrightarrow \overline{u}\: \rho \:\overline{v}$.
\end{definition}

When we wish to emphasize  that an expression or formula
is to be calculated in $\overline{S}$ as opposed to $S,$ we will write
the elements as $\overline{a}, \overline{b}, \overline{c}, \ldots$.
For instance, if $u(x,y) = xy \in U_X$ and $a,b\in S,$ then $u(a,b)
= ab$ while $u(\overline{a}, \overline{b}) = a*b = ba.$ The dual
operator $S \longrightarrow \overline{S}$ reflects Green's relations
in the sense that, for $a,b \in S,$ $a\;\mathcal{L}_S\;b
\,\Longleftrightarrow\, a\;\mathcal{R}_{\overline{S}}\;b$;
$a\;\mathcal{L}_S^0\;b \,\Longleftrightarrow\,
a\;\mathcal{R}_{\overline{S}}^0\;b$.

Some elementary examples of $\overline{\mathcal{V}}$ are $\mathcal{RZ}
= \overline{\mathcal{LZ}},$  $\mathcal{RNB} = \overline{\mathcal{LNB}},$
$\mathcal{RG} = \overline{\mathcal{LG}}$ and $\mathcal{RRO} =
\overline{\mathcal{LRO}}.$ The class of self-dual varieties clearly
includes $\mathcal{T}, \mathcal{RB}, \mathcal{G}, \mathcal{CS},
\mathcal{S, NB}, \mathcal{B}, \mathcal{CR}.$

\begin{lemma}
Let $S\in \mathcal{CR},
\mathcal{U, V} \in \mathcal{L}(\mathcal{CR}), 
u, v, u_{\alpha}, v_{\alpha} (\alpha \in A) \in U_X$ 
 and 
$\mathcal{V} = [u_\alpha =v_\alpha ]_{\alpha \in A}.$
\begin{enumerate}
\item[{\rm (i)}] $\overline{S} \in \mathcal{CR}$.
\item[{\rm (ii)}] $\overline{\mathcal{V}} \in \mathcal{L}(\mathcal{CR})$.
\item[{\rm (iii)}] $\overline{\overline{\mathcal{V}}} = \mathcal{V}$.
\item[{\rm (iv)}] If \,$\mathcal{V} \subseteq \overline{\mathcal{V}}$
then\, $\mathcal{V} = \overline{\mathcal{V}}$.  
\item[{\rm (v)}]  The varieties 
$\mathcal{V}\cap \overline{\mathcal{V}}$ and\, $\mathcal{V}\vee
\overline{\mathcal{V}}$ are self-dual.
\item[{\rm (vi)}] $\overline{\mathcal{V}} = [\overline{u}_\alpha =
\overline{v}_\alpha ]_{\alpha \in A}$.
\item[{\rm(vii)}] $u \;\zeta_{\mathcal{V}}\; v \Longrightarrow \overline{u}\; \zeta_{\overline{\mathcal{V}}} 
\;\overline{v}$
\item[{\rm(viii)}] $\overline{\mathcal{U}\vee \mathcal{V}} = \overline{\mathcal{U}} \vee 
\overline{\mathcal{V}}$ and $\overline{\mathcal{U}\cap \mathcal{V}} = 
\overline{\mathcal{U}} \cap \overline{\mathcal{V}}$.
\end{enumerate}
\end{lemma}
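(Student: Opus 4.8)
The plan is to extract one computational lemma that does the real work and to obtain every remaining item as a formal consequence of the fact that $\mathcal{V}\mapsto\overline{\mathcal{V}}$ is an involutory order-automorphism of $\mathcal{L}(\mathcal{CR})$. I would begin with (i). Since $\mathcal{CR}$ is a variety of unary semigroups, I equip $\overline{S}=(S,*)$ with the \emph{same} unary operation $a\mapsto a^{-1}$ carried by $S$ and verify the four defining identities under $*$: for instance $a*a^{-1}=a^{-1}a=aa^{-1}=a^{-1}*a$ and $a*a^{-1}*a=a$, each reducing to an identity already valid in $S$, while $(a^{-1})^{-1}=a$ is unchanged. (Equivalently, the maximal subgroups of $\overline{S}$ are those of $S$ with reversed multiplication, hence groups with the same identities and inverses --- this is the Green's-relation reflection recorded after Definition 3.3.) The fact to retain for the sequel is that inversion in $\overline{S}$, as a map $S\to S$, coincides with inversion in $S$.

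The heart of the argument is the following \emph{evaluation identity}: for every $w\in U_X$, every $S\in\mathcal{CR}$, and every tuple $\bar a$ from $S$, the value of $\overline{w}$ computed in $\overline{S}$ equals the value of $w$ computed in $S$. I would prove this by induction along the construction $C(\mathrm{i})$--$C(\mathrm{iii})$ of $U_X$: the base case $w\in X$ is trivial; for $w=w_1w_2$ one combines $\overline{w}=\overline{w_2}\,\overline{w_1}$ (Lemma 3.2(i)) with $p*q=qp$; and for $w=(w_1)^{-1}$ one combines $\overline{w}=(\overline{w_1})^{-1}$ (Lemma 3.2(ii)) with the fact, from (i), that inversion agrees in $\overline{S}$ and $S$. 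The immediate corollary is \emph{satisfaction transfer}: $S\models u=v$ if and only if $\overline{S}\models\overline{u}=\overline{v}$.

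From satisfaction transfer the equational parts are short. For (vi), if $S\in\mathcal{V}$ then $S\models u_\alpha=v_\alpha$, so $\overline{S}\models\overline{u}_\alpha=\overline{v}_\alpha$, giving $\overline{\mathcal{V}}\subseteq[\overline{u}_\alpha=\overline{v}_\alpha]$; conversely, if $T\models\overline{u}_\alpha=\overline{v}_\alpha$, put $S=\overline{T}$ (so $\overline{S}=T$ by the evident semigroup involution $\overline{\overline{S}}=S$), and transfer yields $S\models u_\alpha=v_\alpha$, whence $S\in\mathcal{V}$ and $T=\overline{S}\in\overline{\mathcal{V}}$. Part (ii) is then immediate, as $\overline{\mathcal{V}}$ has an equational basis; (vii) is satisfaction transfer applied to each $S\in\mathcal{V}$; and (iii) follows either from $\overline{\overline{\mathcal{V}}}=\{\overline{\overline{S}}:S\in\mathcal{V}\}=\mathcal{V}$ or by reading (vi) through $\overline{\overline{w}}=w$.

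For the lattice parts, the involution $\overline{\overline{\mathcal{V}}}=\mathcal{V}$ together with the obvious monotonicity $\mathcal{A}\subseteq\mathcal{B}\Rightarrow\overline{\mathcal{A}}\subseteq\overline{\mathcal{B}}$ gives $\mathcal{A}\subseteq\mathcal{B}\iff\overline{\mathcal{A}}\subseteq\overline{\mathcal{B}}$, so $\overline{\;\cdot\;}$ is an order-automorphism of $\mathcal{L}(\mathcal{CR})$ and therefore preserves $\cap$ and $\vee$, which is (viii). Then (iv) is immediate: $\mathcal{V}\subseteq\overline{\mathcal{V}}$ forces $\overline{\mathcal{V}}\subseteq\overline{\overline{\mathcal{V}}}=\mathcal{V}$; and (v) follows from (viii) and (iii), since $\overline{\mathcal{V}\cap\overline{\mathcal{V}}}=\overline{\mathcal{V}}\cap\mathcal{V}=\mathcal{V}\cap\overline{\mathcal{V}}$ and likewise for $\vee$. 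The one genuinely substantive step is the evaluation identity, and within it the inversion clause; this is exactly where (i)'s conclusion that $\overline{S}$ and $S$ share the same inverse operation is indispensable. Once that is in hand, every remaining item is formal.
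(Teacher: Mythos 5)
Your proposal is correct, and its computational core --- the evaluation identity $\overline{u}(\overline{a}_1,\ldots,\overline{a}_n)=u(a_1,\ldots,a_n)$ proved by structural induction on $U_X$, followed by satisfaction transfer --- is exactly the Claim at the heart of the paper's proof of (vi), from which (ii), (iii) and (vii) fall out in both treatments. Where you genuinely diverge is in (v) and (viii). The paper proves (viii) by passing to the fully invariant congruences $\zeta_{\mathcal{V}}$ on $CR_X$ and computing with $\zeta_{\mathcal{U}\vee\mathcal{V}}=\zeta_{\mathcal{U}}\cap\zeta_{\mathcal{V}}$ and $\zeta_{\mathcal{U}\cap\mathcal{V}}=\zeta_{\mathcal{U}}\vee\zeta_{\mathcal{V}}$ (the meet case requiring an explicit chain argument), and it proves the join half of (v) directly by tracking $S\in\mathcal{V}\vee\overline{\mathcal{V}}$ through a subsemigroup of a product and an epimorphism; you instead observe that $\mathcal{W}\mapsto\overline{\mathcal{W}}$ is an involutive order-automorphism of $\mathcal{L}(\mathcal{CR})$, hence automatically a lattice automorphism, which yields (viii) for free and then (v) as $\overline{\mathcal{V}\cap\overline{\mathcal{V}}}=\overline{\mathcal{V}}\cap\mathcal{V}$ and likewise for the join. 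Your route is shorter and avoids both the congruence bookkeeping and the explicit HSP argument; what it does not give you (and what the paper's $\zeta$-level computation quietly supplies for later use) is the companion statement about how the dual interacts with the congruences themselves, but nothing in the lemma as stated requires that. One small bookkeeping point: since you derive (v) from (viii) and (viii) from (ii)--(iii), you are reordering the paper's logical dependencies, but there is no circularity, so this is harmless.
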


\begin{proof}
(i)  It is clear that the dual of a group is also a group and therefore that the dual of a semigroup 
that is a union of groups is again a semigroup that is a union of groups.  Hence $\overline{S} \in 
\mathcal{CR}$.\\

\noindent
(ii) Clearly $\overline{\mathcal{V}}$ is closed under the formation
of homomorphisms, subsemigroups and direct products.  Therefore $\overline{\mathcal{V}}$ is a 
variety and, by part (i), $\overline{\mathcal{V}} \in \mathcal{L}(\mathcal{CR})$.\\[0.2cm]

\noindent
(iii) That $\overline{\overline{\mathcal{V}}} = \mathcal{V}$ follows from
the evident fact that the dual of the dual of a semigroup $(S, \;\cdot\;)$
is just $(S, \;\cdot\;)$ itself. \\[0.2cm]

\noindent
(iv) If $\mathcal{V} \subseteq \overline{\mathcal{V}},$ then we must
have $\overline{\mathcal{V}} \subseteq \overline{\overline{\mathcal{V}}}
= \mathcal{V}$ and therefore $\mathcal{V} = 
\overline{\mathcal{V}}$ as claimed. \\

\noindent
(v)  It is trivial that $\mathcal{V} \cap
\overline{\mathcal{V}}$ is self-dual. Let $S\in \mathcal{V} \vee
\overline{\mathcal{V}}.$ Then there exist $\mathcal{V}_\alpha \in
\mathcal{V} \cup \overline{\mathcal{V}},$ $\alpha \in A,$ a
subsemigroup $R$ of $\prod\limits_{\alpha \in A} S_\alpha $ and an
epimorphism $\varphi: \,R\longrightarrow S.$ It follows that
$\overline{S}$ is a homomorphic image of $\overline{R}$ which, in
turn, is a subsemigroup of $\prod\limits_{\alpha \in A}
\overline{S}_\alpha$ where $\overline{S}_\alpha \in \mathcal{V} \cup
\overline{\mathcal{V}}.$ Hence $\overline{S} \in \mathcal{V} \vee 
\overline{\mathcal{V}}.$ That \vspace*{-0.1cm}implies that
$\overline{\mathcal{V} \cup \overline{\mathcal{V}}} \subseteq
\mathcal{V} \vee \overline{\mathcal{V}}$ and therefore that
$\mathcal{V} \vee \overline{\mathcal{V}}$ is self-dual.\\[0.2cm]

\noindent
(vi) Let $a_1, a_2,\ldots ,a_n \in S\in \mathcal{V}$ and $u =
u(x_1,\ldots ,x_n) \in U_X.$ \\[-0.2cm]

\noindent
\emph{Claim\/}: \,$\overline{u} (\overline{a}_1,\ldots ,\overline{a}_n)
= u(a_1,\ldots ,a_n).$\\[-0.2cm]

We argue by induction on $|u|_Y.$ \\[-0.2cm]

\noindent
\emph{Case\/}: \,$|u|_Y = 1.$ \,Then $u=x_1 \in X.$ Hence
$\overline{u} (\overline{a}_1) = \overline{a_1} = a_1 = u(a_1).$\\

\noindent
Now assume that $|u|_Y > 1$ and that the claim is true for all $v\in
U_X$ with $|v|_Y < |u|_Y.$ Let $u = u_1 \cdots u_m$ as a product of
irreducibles (see [PR99], Lemma I.10.4).\\[-0.15cm]

\noindent
\emph{Case\/}: \,$m=1.$ \,Then $u=u_1$ is irreducible and $|u|_Y >
1.$ The only way \vspace*{0.05cm}that that can occur is if $u = (w)^{-1}$
for some $w\in U_X.$ By Lemma 3.2(ii), $\overline{(w)^{-1}} = (\overline{w})^{-1}$
so that $\overline{u} = \overline{(w)^{-1}} = (\overline{w})^{-1}$ and 
\begin{align*}
\overline{u} (\overline{a}_1,\ldots ,\overline{a}_n) =\;&
\overline{(w(\overline{a}_1,\ldots ,\overline{a}_n))^{-1}}
\\[0.25cm]
=\;& (\overline{w}(\overline{a}_1,\ldots ,\overline{a}_n))^{-1}
\\[0.25cm]
=\;& (w(a_1,\ldots ,a_n))^{-1} \qquad \mbox{by induction
hypothesis} \\[0.25cm]
=\;& u(a_1,\ldots ,a_n)\,.
\end{align*}
\vskip0.3cm

\noindent
\emph{Case\/}: \,$m > 1.$ \,Then $\overline{u} = \overline{u_1 u_2
\cdots u_m} = \overline{u}_m \cdots \overline{u}_2 \overline{u}_1$
so that from the substitution $x_i \rightarrow \overline{a}_i$
$\,(1\leq i\leq n)$ into $\overline{S}$ we \vspace*{0.2cm}obtain 
(using $\ast$ to denote the multiplication in $\overline{S}$) 
\begin{align*}
\overline{u}(\overline{a}_1, \ldots ,\overline{a}_n) =\;&
\overline{u}_m (\overline{a}_1,\ldots , \overline{a}_n) \ast \ast \ast 
\overline{u}_1 (\overline{a}_1,\ldots ,\overline{a}_n) \\[0.25cm]
=\;& \overline{u}_1 (\overline{a}_1,\ldots ,\overline{a}_n) \cdots 
\overline{u}_m (\overline{a}_1,\ldots ,\overline{a}_n) \\[0.25cm]
=\;& u_1( a_1,\ldots ,a_n) \cdots u_m (a_1,\ldots ,a_n) \qquad \mbox{by
the induction hypothesis} \\[0.25cm]
=\;& u(a_1,\ldots ,a_n)\,.
\end{align*}
\vskip0.2cm

\noindent
Thus the claim holds.

Consequently, for all $\alpha \in A,$
$$
\overline{u}_\alpha (\overline{a}_1,\ldots ,\overline{a}_n) = u_\alpha
(a_1,\ldots ,a_n) = v_\alpha (a_1,\ldots ,a_n) = \overline{v}_\alpha
(\overline{a}_1,\ldots ,\overline{a}_n)\,.
$$
Thus $\overline{S} \in [\overline{u}_\alpha = \overline{v}_\alpha
]_{\alpha \in A}$ so that $\overline{\mathcal{V}} \subseteq
{[\overline{u}_\alpha = \overline{v}_\alpha ]_{\alpha \in A}.}$
Similarly, $S\in [\overline{u}_\alpha = \overline{v}_\alpha ]_{\alpha
\in A}$ implies that
$\overline{S} \in \big[\,\overline{\overline{u}}_\alpha  =
\overline{\overline{v}}_\alpha \big]_{\alpha \in A} = [u_\alpha = v_\alpha
]_{\alpha \in A} = \mathcal{V}.$
Thus $S = \overline{\overline{S}} \in \overline{\mathcal{V}}$ whence
$[\overline{u}_\alpha = \overline{v}_\alpha ]_{\alpha \in A} \subseteq
\overline{\mathcal{V}}$ and equality prevails.\\

\noindent
(vii)  This follows immediately from (vi).\\

\noindent
(viii)  Let $u, v \in U_X$.   It follows from (vii) that
\begin{eqnarray*}
u\; \zeta_{\overline{\mathcal{U}\vee \mathcal{V}}}\; v 
&\Longleftrightarrow& \overline{u}\: 
\zeta_{\mathcal{U}\;\vee\; \mathcal{V}}\: \overline{v} \Longleftrightarrow \overline{u} \:
(\zeta_{\mathcal{U}}\;\cap\; \zeta_{\mathcal{V}})\: \overline{v}\\
&\Longleftrightarrow& \overline{u}\: \zeta_{\mathcal{U}}\: \overline{v}\;\;\mbox{and}\;\; 
\overline{u}\: \zeta_{\mathcal{V}}\: \overline{v}\: \Longleftrightarrow 
u\: \zeta_{\overline{\mathcal{U}}} \: v \:\mbox{and}\: u\: \zeta_{\overline{\mathcal{V}}} \: v \\
&\Longleftrightarrow& u\: (\zeta_{\overline{\mathcal{U}}}\: \cap\: \zeta_{\overline{\mathcal{V}}})\: 
v \: \Longleftrightarrow u\: \zeta_{\overline{\mathcal{U}}\:\vee\:\overline{\mathcal{V}}} \:v.
\end{eqnarray*}
Therefore, $\overline{\mathcal{U}\:\vee\:\mathcal{V}} = \overline{\mathcal{U}}\:\vee\: 
\overline{\mathcal{V}}$, establishing the first claim.  \\

Similarly,
\begin{eqnarray*}
u\; \zeta_{\overline{\mathcal{U}\cap \mathcal{V}}}\: v 
&\Longleftrightarrow& \overline{u} \:\zeta_{\mathcal{U}\cap \mathcal{V}} \:\overline{v}\\
&\Longleftrightarrow& \overline{u}\: (\zeta_{\mathcal{U}} \vee \zeta_{\mathcal{V}}) \:\overline{v}\\
&\Longleftrightarrow& \mbox{there exist}\: 
u_i \in U_X, \rho_i \in \{\zeta_{\mathcal{U}}, 
\zeta_{\mathcal{V}}\}\: \mbox{with}\:\\
&&  
\overline{u} = u_0 \: \rho_1 \: u_1\: \rho_2\: u_2 \ldots 
\rho_n\: u_n = \overline{v}\\
&\Longleftrightarrow& u\: \overline{\rho_1}\: \overline{u_1}\: \overline{\rho_2} \ldots 
\overline{\rho_n}\: \overline{u_n} = v\\
&\Longleftrightarrow& u\: (\zeta_{\overline{\mathcal{U}}} \vee \zeta_{\overline{\mathcal{V}}})\: v\\
&\Longleftrightarrow& u\: \zeta_{\overline{\mathcal{U}} \cap \overline{\mathcal{V}}} \: v\:. 
\end{eqnarray*}
Therefore $\overline{\mathcal{U}\cap\mathcal{V}} = \overline{\mathcal{U}} \cap \overline{\mathcal{V}}$.
\end{proof}

\begin{theorem}
{\rm (i)} $\mathcal{SD}$ is a complete sublattice of
$\mathcal{L}(\mathcal{CR}).$ \\[-0.3cm]\\
\noindent
For the remaining parts, let $\mathcal{V}
= [u_\alpha =v_\alpha ]_{\alpha \in A} \in [\mathcal{S}, \mathcal{CR}]$
be self-dual\vspace*{0.1cm}.
\begin{enumerate}
\item[{\rm (ii)}] $\overline{\mathcal{V}^{T_{\ell}}} =
\mathcal{V}^{T_r}, \quad \overline{\mathcal{V}^{K_{\ell}}} =
\mathcal{V}^{K_r}\vspace*{0.1cm}.$
\item[{\rm (iii)}] $\mathcal{V}^K$ and\, $\mathcal{V}^T$ are
self-dual\vspace*{0.1cm}.
\end{enumerate}
\end{theorem}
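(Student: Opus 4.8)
The plan is to prove the three parts in order, leveraging the duality machinery already established in Lemma 3.5, particularly the identity-level description of duals in part (vi) and the explicit bases for the operators $\mathcal{V}^{T_\ell}, \mathcal{V}^{K_\ell}$ (and their dual formulas) given in Lemma 2.5.

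For part (i), I would show $\mathcal{SD}$ is closed under arbitrary meets and joins. The key tool is Lemma 3.5(viii), which gives $\overline{\bigcap \mathcal{V}_i} = \bigcap \overline{\mathcal{V}_i}$ and $\overline{\bigvee \mathcal{V}_i} = \bigvee \overline{\mathcal{V}_i}$; although (viii) is stated for two varieties, the argument via $\zeta$-congruences extends verbatim to arbitrary families, so if each $\mathcal{V}_i$ is self-dual then so are their meet and join. The only subtlety is confirming that the infinite-family version of (viii) holds as claimed; since meets correspond to joins of fully invariant congruences and joins to intersections, and Lemma 3.2(iv) transports $\zeta$-relatedness through the mirror map, the extension is routine. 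Thus $\mathcal{SD}$ is a complete sublattice.

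For part (ii), the plan is to read off the bases. By Lemma 2.5, $\mathcal{V}^{T_\ell} = [\,xu_\alpha = xu_\alpha(xv_\alpha)^0\,]_{\alpha\in A}$, so by Lemma 3.5(vi) its dual has basis obtained by applying the mirror map to each identity. Using Lemma 3.2(i),(ii) to compute $\overline{xu_\alpha} = \overline{u}_\alpha\,\overline{x}$, $\overline{(xv_\alpha)^0} \;\zeta\; (\overline{v}_\alpha\,\overline{x})^0$, and the self-duality hypothesis $\overline{\mathcal{V}} = \mathcal{V}$ (which lets me rename $\overline{u}_\alpha, \overline{v}_\alpha$ back into a defining basis for $\mathcal{V}$ with the variable $\overline{x}$ renamed to a fresh $y$), I expect to land on precisely the right-handed basis. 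The target is the dual formula for $\mathcal{V}^{T_r}$, namely $[\,u_\alpha x = u_\alpha x (v_\alpha x)^0\,]$, which is the mirror of the $T_\ell$ basis; so $\overline{\mathcal{V}^{T_\ell}} = \mathcal{V}^{T_r}$ follows. The same computation applied to the three-identity basis of $\mathcal{V}^{K_\ell}$ in Lemma 2.5 yields $\overline{\mathcal{V}^{K_\ell}} = \mathcal{V}^{K_r}$, where one checks that $\overline{xu_\alpha y(xv_\alpha y)^{-1}} \;\zeta\;$ an idempotent-declaring word of the same symmetric shape, which is automatic since the content and mirror structure are preserved.

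For part (iii), I would combine (ii) with Lemma 3.5(viii) and the decompositions in Lemma 2.5. Since $\mathcal{V}^T = \mathcal{V}^{T_\ell} \cap \mathcal{V}^{T_r}$, applying the dual and (viii) gives $\overline{\mathcal{V}^T} = \overline{\mathcal{V}^{T_\ell}} \cap \overline{\mathcal{V}^{T_r}} = \mathcal{V}^{T_r} \cap \mathcal{V}^{T_\ell} = \mathcal{V}^T$, using (ii) in both directions (the second equality from applying (ii) to $\overline{\mathcal{V}^{T_r}} = \overline{\overline{\mathcal{V}^{T_\ell}}} = \mathcal{V}^{T_\ell}$ via Lemma 3.5(iii)). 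For $\mathcal{V}^K$, I would use Lemma 2.8(ii), $\mathcal{V}^K = \bigvee_{\sigma\in\Theta^1} \mathcal{V}^{\sigma(K_\ell,K_r)}$, together with the complete-lattice version of (viii). The decisive observation is that the mirror map swaps $K_\ell \leftrightarrow K_r$ (by part (ii)), hence sends each word $\sigma(K_\ell,K_r)$ to $\overline{\sigma}(K_\ell,K_r)$, and by the remark in Section 2 that the two length-$n$ words of $\Theta$ are mirror images of each other, this swap is a bijection of $\Theta^1$ onto itself. Therefore $\overline{\mathcal{V}^K} = \bigvee_\sigma \overline{\mathcal{V}^{\sigma}} = \bigvee_\sigma \mathcal{V}^{\overline{\sigma}} = \mathcal{V}^K$. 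The main obstacle is the bookkeeping in part (ii): verifying that each mirrored defining word is $\zeta$-equivalent to one in the claimed symmetric basis, and correctly handling the interplay between the formal mirror image (a syntactic operation on $Y^+$) and equality modulo $\zeta$; Lemma 3.2 is designed precisely to clear this hurdle, so the difficulty is careful application rather than any genuinely new idea.
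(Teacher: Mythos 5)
Your overall plan is sound and each part can be made to work, but you take a genuinely different route from the paper in almost every step, and there is one real gap in part (iii). For (i) the paper argues directly with the closure operations: given $S$ in a join of self-dual varieties, it exhibits $\overline{S}$ as a homomorphic image of a subsemigroup of a product of duals; your route through fully invariant congruences and an infinitary version of Lemma 3.4(viii) is equivalent and the extension to arbitrary families is indeed routine (finite chains for joins of congruences, termwise biconditionals for meets). For (ii) the paper does not touch bases at all: it uses the semantic descriptions $\mathcal{V}^{T_{\ell}} = \{S \mid S/\mathcal{L}^0 \in \mathcal{V}\}$ from Lemma 2.5 together with the fact that the dual operator interchanges $\mathcal{L}^0$ and $\mathcal{R}^0$, giving a three-line biconditional; your syntactic computation with mirrored bases also works but is heavier, requires the explicit dual basis for $\mathcal{V}^{T_r}$ (which the paper only cites, and which is $u_\alpha x = (v_\alpha x)^0\,u_\alpha x$, not $u_\alpha x = u_\alpha x(v_\alpha x)^0$ as you wrote), and needs Lemma 3.2 to control the $\zeta$-classes of mirrored words. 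For $\mathcal{V}^T$ in (iii) your derivation from $\mathcal{V}^T=\mathcal{V}^{T_{\ell}}\cap\mathcal{V}^{T_r}$, part (ii) and Lemma 3.4(iii),(viii) is actually cleaner than the paper's identity manipulation.

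The gap is in your treatment of $\mathcal{V}^K$ in (iii). You invoke part (ii) to conclude that the dual operator sends $\mathcal{V}^{\sigma(K_{\ell},K_r)}$ to $\mathcal{V}^{\overline{\sigma}(K_{\ell},K_r)}$, but part (ii) as stated only gives $\overline{\mathcal{W}^{K_{\ell}}}=\mathcal{W}^{K_r}$ when $\mathcal{W}$ itself is self-dual, and the intermediate varieties $\mathcal{V}^{K_{\ell}}$, $\mathcal{V}^{K_{\ell}K_r}$, \dots\ are not self-dual. To iterate you need the stronger statement $\overline{\mathcal{W}^{K_{\ell}}}=\overline{\mathcal{W}}^{\,K_r}$ for arbitrary $\mathcal{W}\in[\mathcal{S},\mathcal{CR}]$, which is true and follows from the same semantic equivalence used in (ii), but must be stated and proved in that generality before your induction on $|\sigma|$ goes through. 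The paper sidesteps this entirely: it proves $\mathcal{V}^K$ self-dual directly from the basis $[xu_\alpha y(xv_\alpha y)^{-1}\in E]$, using self-duality of $\mathcal{V}$ to add the mirrored identities and [PR99] Lemma II.2.2(iii) to commute the idempotent factor, concluding $\mathcal{V}^K\subseteq\overline{\mathcal{V}^K}$ and hence equality by Lemma 3.4(iv). Once you supply the general form of (ii), your join-decomposition argument via Lemma 2.8(ii) is a valid alternative.
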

\begin{proof}
(i) Clearly $\mathcal{SD}$ is closed under arbitrary intersections.
Now let $S \in \mathcal{V} = \bigvee\limits_{\alpha \in A}
\mathcal{V}_\alpha $ where $\mathcal{V}_\alpha \in \mathcal{SD}$ for
all $\alpha \in A.$ Then there exist $B \subseteq A,$ $S_\alpha \in
\mathcal{V}_\alpha $ for $\alpha \in B,$ a subsemigroup $R$ of
$\prod\limits_{\alpha \in A} S_\alpha $ and an epimorphism $\varphi
: \,R\longrightarrow S.$ Since each $\mathcal{V}_\alpha $ is
self-dual, we have $\overline{S}_\alpha \in
\overline{\mathcal{V}}_\alpha = \mathcal{V}_\alpha $ for all $\alpha
\in B.$ Now $\varphi $ is also a homomorphism of $\overline{R}$ onto
$\overline{S}$ and $\overline{R}$ is a subsemigroup of
$\prod\limits_{\alpha \in B} \overline{S}_\alpha \in
\bigvee\limits_{\alpha \in A} \mathcal{V}_\alpha = \mathcal{V}.$ Thus
$\overline{S} \in \mathcal{V}$ \vspace*{-0.2cm}so that $\overline{\mathcal{V}}
\subseteq \mathcal{V}$ and $\mathcal{V}$ is self-dual.\\[0.4cm]

\noindent
(ii) We have
$$
S \in \mathcal{V}^{T_{\ell}} \;\Longleftrightarrow\; S/ \mathcal{L}^0
\in \mathcal{V} \;\Longleftrightarrow\; \overline{S}/ \mathcal{R}^0
\in \overline{\mathcal{V}} = \mathcal{V} \;\Longleftrightarrow
\overline{S} \in \mathcal{V}^{T_r}\,.
$$
Hence $\mathcal{V}^{T_r} = \overline{\mathcal{V}^{T_{\ell}}}.$ A
similar argument yields $\mathcal{V}^{K_r} =
\overline{\mathcal{V}^{K_{\ell}}}.$ \\[0.4cm]

\noindent
(iii) \,\emph{Case\/}: $\mathcal{V}^K$.  \,By Lemma 2.5, $\mathcal{V}^K = [xu_\alpha y
(xv_\alpha y)^{-1} \in E]_{\alpha \in A}.$ Since $\mathcal{V}$ is
self-dual, $\mathcal{V}$ also satisfies $\overline{u}_\alpha =
\overline{v}_\alpha ,$ for all $\alpha \in A.$ Hence, by Lemma 2.5,
$\mathcal{V}^K$ satisfies the identity
$$
y\overline{u}_\alpha x (y\overline{v}_\alpha x)^{-1} \in E \qquad\quad
(\alpha \in A)\,.
$$
By [PR99] Lemma II.2.2(iii), $\mathcal{V}^K$ also satisfies
$$
(y\overline{v}_\alpha x)^{-1} y\overline{u}_\alpha x \in E\,,
$$
that is,
$$
\overline{xu_\alpha y (xv_\alpha y)^{-1}} \in E\,.
$$

\noindent
Thus, by Lemma 3.4(v)\vspace*{0.1cm},
$$
\mathcal{V}^K \subseteq \big[\,\overline{xu_\alpha y (xv_\alpha y)^{-1}}
\in E\big]_{\alpha \in A} = \overline{\mathcal{V}^K} \vspace*{0.1cm}\,
$$
which implies, by Lemma 3.4(v), that $\mathcal{V}^K = \overline{\mathcal{V}^K}$ and
$\mathcal{V}^K$ is self-dual. \\[-0.05cm]

\noindent
\emph{Case\/}: \,$\mathcal{V}^T.$ By Lemma 2.5, $\mathcal{V}^T =
{[u_\alpha^0 = v_\alpha^0, \,(xu_\alpha y)^0 = (xv_\alpha y)^0]}_{\alpha
\in A}.$ \vspace*{0.05cm}Since $\mathcal{V}$ is self-dual, $\mathcal{V}$
also satisfies $\overline{u}_\alpha = \overline{v}_\alpha $ so that
$\mathcal{V}^T$ satisfies $(y\overline{u}_\alpha x)^0 = (y\overline{v}_\alpha
x)^0$ and, \vspace*{0.05cm}equivalently, $\overline{(xu_\alpha y)^0} =
\overline{(xv_\alpha y)^0},$ for all $\alpha \in A.$ In a similar
fashion, \vspace*{0.05cm}it follows that $\mathcal{V}^T$ satisfies
$\overline{u}_\alpha^{\,0} = \overline{v}_\alpha^{\,0}.$ Thus $\mathcal{V}^T
\subseteq \overline{\mathcal{V}^T}$ and equality prevails.\\[0.4cm]
\end{proof}

We may now augment our list of self-dual varieties with $\mathcal{O}
= \mathcal{G}^K,$ $L\mathcal{O} = \mathcal{CS}^K,$ $\mathcal{BG} =
\mathcal{B}^T,$ $\mathcal{B}^{T_{\ell}} \vee
\mathcal{B}^{T_r},$ etc.
\vskip1cm

\section{  Networks}

Before focussing on $K$-classes, we first make some general observations regarding sublattices 
of $\mathcal{L}(\mathcal{CR})$ generated by the repeated application of operators of the 
form $\mathcal{V} \rightarrow \mathcal{V}^{\sigma}, \mathcal{V} \rightarrow 
\mathcal{V}^{\sigma (K_{\ell},K_r)}$ for $\sigma \in \Theta^1$. 

\begin{theorem}
Let $\mathcal{V}
= [u_\alpha =v_\alpha ]_{\alpha \in A} \in [\mathcal{S}, \mathcal{CR}]$
be self-dual.  In addition let $\sigma ,\tau
\in \Theta$ be such that $|\sigma | = |\tau |$ and $h(\sigma ) \neq
h(\tau ),$ $t(\sigma ) = T_{\ell},$ $t(\tau ) = T_r\vspace*{0.1cm}.$
\begin{enumerate}
\item[{\rm (i)}] $\mathcal{V}^{\sigma (K_{\ell},K_r)} \cap
\mathcal{V}^{\tau (K_{\ell},K_r)}$ and\, $\mathcal{V}^{\sigma (T_{\ell},
T_r)} \cap \mathcal{V}^{\tau (T_{\ell},T_r)}$ are self-dual\vspace*{0.1cm}.
\item[{\rm (ii)}] $\mathcal{V}^{\sigma (K_{\ell},K_r)} \vee
\mathcal{V}^{\tau (K_{\ell},K_r)} = \mathcal{V}^{\sigma
(K_{\ell},K_r)K_r} \cap \mathcal{V}^{\tau (K_{\ell},K_r)K_{\ell}}$ is self-dual.
\vspace*{0.1cm}.
\item[{\rm (iii)}] $(\mathcal{V}^\sigma \vee \mathcal{V}^\tau ) \cap
\mathcal{V}^K = (\mathcal{V}^{\sigma T_r} \cap \mathcal{V}^{\tau
T_{\ell}}) \cap \mathcal{V}^K = \mathcal{V}^{\sigma (K_{\ell},K_r)K_r}
\cap \mathcal{V}^{\tau (K_{\ell},K_r)K_{\ell}}\vspace*{0.1cm}.$
\item[{\rm (iv)}] $(\mathcal{V}^\sigma \vee \mathcal{V}^\tau )^{T}
= \mathcal{V}^{\sigma T_r} \cap \mathcal{V}^{\tau T_{\ell}}\vspace*{0.1cm}.$
\item[{\rm (v)}] Let $\mathcal{V} \subset \mathcal{V}^K$. The varieties
of the form $\mathcal{V}^{\rho (K_{\ell},K_r)}$ $(\rho \in \Theta^1)$
and $\mathcal{V}^{\sigma (K_{\ell},K_r)} \cap \mathcal{V}^{\tau
(K_{\ell},K_r)}$ {\rm (}with $\sigma ,\tau $ as above{\rm )} consitute
a sublattice of $[\mathcal{V}, \mathcal{V}^K]$ with distinct elements
as illustrated in Diagram {\rm 4.2}. Solid lines indicate $K_{\ell}$-related
varieties and broken lines indicate $K_r$-related varieties\vspace*{0.1cm}.
\item[{\rm(vi)}] Let $\mathcal{V} \subset \mathcal{V}^K.$ The varieties
of the forms $\mathcal{V}^\rho $ $(\rho \in \Theta^1),$
$\mathcal{V}^\sigma \cap \mathcal{V}^\tau $ and $\mathcal{V}^\sigma
\vee \mathcal{V}^\tau $ {\rm (}with $\sigma ,\tau $ as above{\rm )}
constitute a sublattice of $[\mathcal{V}, \mathcal{CR}]$ with distinct
elements as illustrated in Diagram {\rm 4.3} {\rm (}with the possible
exception of the equality of $\mathcal{V}^\sigma \vee \mathcal{V}^\tau $
and $\mathcal{V}^{\sigma T_r} \cap \mathcal{V}^{\tau T_{\ell}}$ for some
or all of the $\sigma ,\tau ).$ Solid lines indicate
$T_{\ell}$-related varieties and broken lines indicate $T_r$-related
varieties\vspace*{0.1cm}.
\item[{\rm (vii)}] $\bigvee\limits_{\sigma \in \Theta }
\mathcal{V}^{\sigma (T_{\ell},T_r)} = \mathcal{CR}.$
\end{enumerate}
\end{theorem}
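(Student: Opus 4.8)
The plan is to prove $\bigvee_{\sigma\in\Theta}\mathcal{V}^{\sigma(T_{\ell},T_r)}=\mathcal{CR}$ by reducing to finitely generated members and then analyzing iterated trace quotients. Write $\mathcal{W}=\bigvee_{\sigma\in\Theta}\mathcal{V}^{\sigma(T_{\ell},T_r)}$. Since $\mathcal{W}$ is a variety, it suffices to show that every finitely generated $S\in\mathcal{CR}$ lies in $\mathcal{W}$: if $\mathcal{W}\neq\mathcal{CR}$, then $\mathcal{W}$ satisfies some identity $u=v$ failing in $\mathcal{CR}$, the failure is witnessed by finitely many elements $a_1,\dots,a_m$ of some $S\in\mathcal{CR}$, and the completely regular subsemigroup $\langle a_1,\dots,a_m\rangle$ is then a finitely generated member of $\mathcal{CR}\setminus\mathcal{W}$. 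So fix such an $S=\langle a_1,\dots,a_m\rangle$.

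The decisive observation is that a single $T_{\ell}$-step already lands inside the variety of bands. By Lemma 2.5, $\mathcal{V}^{T_{\ell}}=\{S\in\mathcal{CR}\mid S/\mathcal{L}^0\in\mathcal{V}\}$, and dually $\mathcal{V}^{T_r}=\{S\in\mathcal{CR}\mid S/\mathcal{R}^0\in\mathcal{V}\}$; iterating the operator definition, $S\in\mathcal{V}^{\sigma(T_{\ell},T_r)}$ if and only if the successive quotient of $S$ by the letters of $\sigma$ read from right to left, interpreted as $\mathcal{L}^0$ for $T_{\ell}$ and $\mathcal{R}^0$ for $T_r$, lies in $\mathcal{V}$. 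Now for any $a\in S$ we have $a=a^{-1}a^2$, whence $a\,\mathcal{L}\,a^2$ and so $\overline{a}=\overline{a^2}=\overline{a}^{\,2}$ in $S/\mathcal{L}^0$; thus every element of $S/\mathcal{L}^0$ is idempotent and $S/\mathcal{L}^0$ is a band. Being a homomorphic image of $\langle a_1,\dots,a_m\rangle$, it is a finitely generated band, hence finite, since bands are locally finite.

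It remains to show that the alternating quotient sequence $B_0=S/\mathcal{L}^0,\ B_1=B_0/\mathcal{R}^0,\ B_2=B_1/\mathcal{L}^0,\dots$ reaches a semilattice after finitely many steps. From $B_0$ on we are inside the finite band $B_0$, and each further step is a quotient, so the cardinalities are nonincreasing. Here I use that a band with $\mathcal{L}=\mathcal{R}=\epsilon$ is a semilattice: each $\mathcal{D}$-class is a rectangular band whose rows and columns are forced to be singletons. Consequently, if some $B_i$ is not a semilattice then $\mathcal{L}\neq\epsilon$ or $\mathcal{R}\neq\epsilon$ on it, so at least one of the next two alternating quotients is proper and strictly decreases the cardinality. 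As $B_0$ is finite, the sequence stabilizes at a semilattice after some number $N$ of steps. A semilattice lies in $\mathcal{S}\subseteq\mathcal{V}$, so taking $\sigma\in\Theta$ to be the alternating word of length $N$ ending in $T_{\ell}$ yields $S\in\mathcal{V}^{\sigma(T_{\ell},T_r)}\subseteq\mathcal{W}$, contradicting $S\notin\mathcal{W}$. Hence $\mathcal{W}=\mathcal{CR}$.

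The main obstacle is the termination of the alternating descent, and the simplification that defuses it is that the first $\mathcal{L}^0$-quotient collapses all group structure and lands in the band case; once there, local finiteness of bands together with the characterization of semilattices as exactly the bands with $\mathcal{L}=\mathcal{R}=\epsilon$ forces the descent to terminate. I note that self-duality of $\mathcal{V}$ is not actually needed for this part; only the inclusion $\mathcal{S}\subseteq\mathcal{V}$ is used.
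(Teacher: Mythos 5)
Your proposal addresses only part (vii) of this seven-part theorem. Parts (i)--(vi) --- the self-duality statements, the identity $\mathcal{V}^{\sigma (K_{\ell},K_r)} \vee \mathcal{V}^{\tau (K_{\ell},K_r)} = \mathcal{V}^{\sigma(K_{\ell},K_r)K_r} \cap \mathcal{V}^{\tau (K_{\ell},K_r)K_{\ell}}$, and the two lattice diagrams --- are the substance of the result and of the paper's proof (an induction on $|\sigma|$ using Theorem 3.5(ii) and Lemmas 2.6--2.8); none of this appears in your write-up. Ironically, (vii) is the one part the paper does not prove: it is quoted from [PR88], Theorem 4.6(iii).

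The argument you do give for (vii) has a fatal gap. You infer from $a \mathrel{\mathcal{L}} a^{2}$ that $\overline{a}=\overline{a^{2}}$ in $S/\mathcal{L}^{0}$, and conclude that $S/\mathcal{L}^{0}$ is always a band. But $\mathcal{L}^{0}$ is the \emph{largest congruence contained in} $\mathcal{L}$, not the congruence generated by $\mathcal{L}$, so $\mathcal{L}$-equivalent elements need not be identified in $S/\mathcal{L}^{0}$. Concretely, take $G=\{e,a\}$ the two-element group and form $S=G\cup G^{1}$ as in the paper's proof of Theorem 5.1(7): $G^{1}$ carries the right-zero multiplication, $g\ast x=x$ and $x\ast g=xg$ for $g\in G$, $x\in G^{1}$. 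Each element of the ideal $G^{1}$ is a singleton $\mathcal{L}$-class, so any congruence contained in $\mathcal{L}$ that identified $e$ with $a$ would have to identify $1\ast e=e$ with $1\ast a=a$ inside $G^{1}$, which it cannot; hence $\overline{a}\neq\overline{a}^{2}$ in $S/\mathcal{L}^{0}$ and this quotient is not a band. (Equivalently, your claim would force $\mathcal{CR}=\mathcal{B}^{T_{\ell}}$, which is false.) With this step gone, the rest of the argument collapses: $S/\mathcal{L}^{0}$ is only a finitely generated completely regular semigroup, so local finiteness of bands is unavailable, the cardinality-descent has no finite starting point, and the termination of the alternating sequence of $\mathcal{L}^{0}$- and $\mathcal{R}^{0}$-quotients --- which is exactly the content of (vii) --- remains unproved.
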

\vskip0.025cm

\begin{proof}
(i) \,\emph{Claim\/}: \,$\mathcal{V}^\sigma \cap \mathcal{V}^\tau $
is self-dual. We argue by induction on $|\sigma |.$ Let $|\sigma |
= |\tau | = 1$ so that $\sigma = t(\sigma ) = T_{\ell},$ $\tau = t(\tau ) = T_r.$
Then
$$
\mathcal{V}^{T_{\ell}} \cap \mathcal{V}^{T_r} = \mathcal{V}^{T_{\ell}}
\cap \overline{\mathcal{V}^{T_{\ell}}} \qquad\quad \mbox{by Theorem 3.5 (ii)}
$$
which is self-dual by Lemma 3.4(v).

So now assume that $|\sigma | = |\tau | = m>1$ and that the claim
holds \vspace*{0.2cm}for $|\sigma | = |\tau | < m.$

We may assume that $\sigma = \sigma_1T_{\ell}$ and $\tau = \tau_1T_r$
for some $\sigma_1, \tau_1 \in \Theta $ with $t(\sigma_1) = T_r, t(\tau_1) 
= T_{\ell}$. Then, by Lemma 2.7(iv), we have
$$
(\mathcal{V}^{\sigma_1} \cap \mathcal{V}^{\tau_1})^{T_{\ell}} =
\mathcal{V}^\sigma \cap \mathcal{V}^{\tau_1} = \mathcal{V}^{\tau_1}
\qquad\quad \mbox{since $|\tau_1| < |\sigma |$}
$$
and similarly
$$
(\mathcal{V}^{\sigma_1} \cap \mathcal{V}^{\tau_1})^{T_r} =
\mathcal{V}^{\sigma_1} \cap \mathcal{V}^\tau = \mathcal{V}^{\sigma_1}
\qquad\quad \mbox{since $|\sigma_1| < |\tau |$.}
$$
\vskip0.1cm

\noindent
By Theorem 3.5(ii), since $\mathcal{V}^{\sigma_1} \cap \mathcal{V}^{\tau_1}$
is self-dual by the induction hypothesis, we have
$$
\mathcal{V}^{\sigma_1} = (\mathcal{V}^{\sigma_1} \cap
\mathcal{V}^{\tau_1})^{T_r} = \overline{(\mathcal{V}^{\sigma_1}
\cap \mathcal{V}^{\tau_1})^{T_{\ell}}} =
\overline{\mathcal{V}^{\tau_1}}
$$
so that
$$
\mathcal{V}^{\sigma_1} \vee \mathcal{V}^{\tau_1} =
\overline{\mathcal{V}^{\tau_1}} \vee \mathcal{V}^{\tau_1}\,.
$$
\vskip0.1cm

\noindent
By Lemma 3.4(v),  $\mathcal{V}^{\sigma_1} \vee \mathcal{V}^{\tau_1}$
is self-dual. \\

\noindent
Furthermore, by Lemma 2.7(iii) and with $\tau_1 = \tau_2
T_{\ell},$ where $\tau_2 \in \Theta^1$, we have
$$
(\mathcal{V}^{\sigma_1} \vee \mathcal{V}^{\tau_1})_{T_{\ell}}
= (\mathcal{V}^{\sigma_1})_{T_{\ell}} \vee
(\mathcal{V}^{\tau_1})_{T_{\ell}} = \mathcal{V}^{\sigma_1}
\vee (\mathcal{V}^{\tau_2})_{T_{\ell}}\,.
$$
Since $|\tau_2| < |\sigma_1|,$ it follows that $\mathcal{V}^{\tau_2}
\subseteq \mathcal{V}^{\sigma_1}.$ Consequently,
$$
(\mathcal{V}^{\sigma_1} \vee \mathcal{V}^{\tau_1})_{T_{\ell}}
= \mathcal{V}^{\sigma_1}
$$
which implies that $(\mathcal{V}^{\sigma_1} \vee
\mathcal{V}^{\tau_1})^{T_{\ell}} = \mathcal{V}^{\sigma_1T_{\ell}}
= \mathcal{V}^\sigma \,.$ Similarly, $(\mathcal{V}^{\sigma_1}
\vee \mathcal{V}^{\tau_1})^{T_r} = \mathcal{V}^\tau .$ But
$\mathcal{V}^{\sigma_1} \vee \mathcal{V}^{\tau_1}$ is self-dual and
so, by Theorem 3.5(ii),
$$
\mathcal{V}^\tau = (\mathcal{V}^{\sigma_1} \vee
\mathcal{V}^{\tau_1})^{T_r} = \overline{(\mathcal{V}^{\sigma_1}
\vee \mathcal{V}^{\tau_1})^{T_{\ell}}} =
\overline{\mathcal{V}^\sigma }\,.
$$
Consequently $\mathcal{V}^\sigma \cap \mathcal{V}^\tau $ is
self-dual.\\[-0.15cm]

\noindent
\emph{Claim\/}: \,$\mathcal{V}^{\sigma (K_{\ell}, K_r)} \cap
\mathcal{V}^{\tau (K_{\ell},K_r)}$ is self-dual.
By Lemma 2.6(ii),
\begin{align*}
\mathcal{V}^{\sigma (K_{\ell},K_r)} \cap \mathcal{V}^{\tau
(K_{\ell},K_r)} =\;& \mathcal{V}^{\sigma (T_{\ell},T_r)} \cap
\mathcal{V}^K \cap \mathcal{V}^{\tau (T_{\ell},T_r)} \cap
\mathcal{V}^K \\[0.2cm]
=\;& \big(\mathcal{V}^{\sigma (T_{\ell},T_r)} \cap \mathcal{V}^{\tau
(T_{\ell},T_r)}\big) \cap \mathcal{V}^K
\end{align*}

\noindent
where, by the first case $\mathcal{V}^{\sigma (T_{\ell},T_r)} \cap
\mathcal{V}^{\tau (T_{\ell},T_r)}$ is self-dual and, by Theoem 3.5(iii),
$\mathcal{V}^K$ is self-dual. Hence $\mathcal{V}^{\sigma
(K_{\ell},K_r)} \cap \mathcal{V}^{\tau (K_{\ell},K_r)}$ is
self-dual.\\[0.4cm]

\noindent
(ii) We \vspace*{0.1cm}have
$$
\mathcal{V}^{\sigma (K_{\ell},K_r)} \subseteq \mathcal{V}^{\sigma
(K_{\ell},K_r)} \vee \mathcal{V}^{\tau (K_{\ell},K_r)} \subseteq
\mathcal{V}^{\sigma (K_{\ell},K_r)K_r} \vspace*{0.1cm}\,
$$
so \vspace*{0.1cm}that
$$
\mathcal{V}^{\sigma (K_{\ell},K_r)} \;\;\,K_r \;\;\,
\mathcal{V}^{\sigma (K_{\ell},K_r)} \vee \mathcal{V}^{\tau
(K_{\ell},K_r)} \vspace*{0.1cm}\,
$$
similarly \vspace*{0.1cm}\,
$$
\mathcal{V}^{\tau (K_{\ell},K_r)} \;\;\, K_{\ell} \;\;\,
\mathcal{V}^{\sigma (K_{\ell},K_r)} \vee \mathcal{V}^{\tau
(K_{\ell},K_r)}\vspace*{0.1cm}\,.
$$
On the other \vspace*{0.1cm}hand
$$
\mathcal{V}^{\sigma (K_{\ell},K_r)} \subseteq \mathcal{V}^{\sigma
(K_{\ell},K_r)K_r} \cap \mathcal{V}^{\tau (K_{\ell},K_r)K_{\ell}}
\subseteq \mathcal{V}^{\sigma (K_{\ell},K_r)K_r} \vspace*{0.1cm}\,
$$
so \vspace*{0.1cm}that
$$
\mathcal{V}^{\sigma (K_{\ell},K_r)} \;\;\,K_r \;\;\,
\mathcal{V}^{\sigma (K_{\ell},K_r)K_r} \cap \mathcal{V}^{\tau
(K_{\ell},K_r)K_{\ell}} \vspace*{0.1cm}\,
$$
and \vspace*{0.1cm}similarly
$$
\mathcal{V}^{\tau (K_{\ell},K_r)} \;\;\, K_{\ell} \;\;\,
\mathcal{V}^{\sigma (K_{\ell},K_r)K_r} \cap \mathcal{V}^{\tau
(K_{\ell},K_r)K_{\ell}}\vspace*{0.1cm}\,.
$$
Thus we must have\vspace*{0.1cm}\,
$$
\mathcal{V}^{\sigma (K_{\ell},K_r)} \vee \mathcal{V}^{\tau (K_{\ell},K_r)}
\;\;\, K_{\ell} \cap K_r \;\;\, \mathcal{V}^{\sigma (K_{\ell},K_r)K_r}
\cap \mathcal{V}^{\tau (K_{\ell},K_r)K_{\ell}}\vspace*{0.1cm}\,.
$$
\vskip0.1cm

\noindent
Since $K_{\ell} \cap K_r = \varepsilon$,  we have 
$$
\mathcal{V}^{\sigma (K_{\ell},K_r)} \vee \mathcal{V}^{\tau (K_{\ell},K_r)}
= \mathcal{V}^{\sigma (K_{\ell},K_r)K_r} \cap \mathcal{V}^{\tau
(K_{\ell},K_r)K_{\ell}}\,.
$$
That this variety is self-dual follows from part (i) with 
$\sigma(K_{\ell}, K_r)K_r$ and $\tau(K_{\ell}, K_r)K_{\ell}$ in place of 
$\sigma(K_{\ell}, K_r)$ and $\tau(K_{\ell}, K_r)$, respectively.  
\vskip0.7cm

\noindent
(iii) Denote the three expressions by $A,B$ and $C,$ respectively.
\\[-0.2cm]

\noindent
$A \subseteq B.$ \,We have $\mathcal{V}^{\sigma }, \mathcal{V}^\tau
\subseteq \mathcal{V}^{\sigma T_r} \cap \mathcal{V}^{\tau T_{\ell}}$
so that
$$
(\mathcal{V}^\sigma \vee \mathcal{V}^\tau ) \cap \mathcal{V}^K
\subseteq (\mathcal{V}^{\sigma T_r} \cap \mathcal{V}^{\tau
T_{\ell}}) \cap \mathcal{V}^K\,.
$$
\vskip0.2cm

\noindent
$B=C.$ \,We have
\begin{align*}
(\mathcal{V}^{\sigma T_r} \cap \mathcal{V}^{\tau T_{\ell}})
\cap \mathcal{V}^K =\;& (\mathcal{V}^{\sigma T_r} \cap
\mathcal{V}^{K}) \cap (\mathcal{V}^{\tau T_{\ell}} \cap
\mathcal{V}^K) \\[0.2cm]
=\;& \mathcal{V}^{\sigma (K_{\ell},K_r)K_r} \cap \mathcal{V}^{\tau
(K_{\ell},K_r)K_{\ell}} \qquad\quad\mbox{by Lemma 2.6}
\end{align*}
\vskip0.2cm

\noindent
$C\subseteq A.$ \,By part (ii),
\begin{align*}
\mathcal{V}^{\sigma (K_{\ell},K_r)K_r} \cap \mathcal{V}^{\tau
(K_{\ell},K_r)K_{\ell}} =\;& \mathcal{V}^{\sigma (K_{\ell},K_r)} \vee
\mathcal{V}^{\tau (K_{\ell},K_r)} \\[0.2cm]
=\;& (\mathcal{V}^{\sigma}\cap \mathcal{V}^{\kappa}) \vee (\mathcal{V}^{\tau}\cap 
\mathcal{V}^{\kappa})\\
\subseteq\;& (\mathcal{V}^\sigma \vee \mathcal{V}^\tau ) \cap
\mathcal{V}^K\,.
\end{align*}
\vskip0.85cm

\noindent
(iv) Replacing $K_{\ell}$ and $K_r$ by $T_{\ell}$ and $T_r,$
respectively, in the argument in part (ii) we \vspace*{0.1cm}obtain
$$
\mathcal{V}^\sigma \vee \mathcal{V}^\tau  \;\;\, P \;\;\,
\mathcal{V}^{\sigma T_r} \cap \mathcal{V}^{\tau T_{\ell}}
\vspace*{0.1cm}\,
$$
for $P = T_{\ell}$ and $P = T_r$ and therefore also for $T_{\ell}
 \cap T_r = T.$ Thus $\mathcal{V}^\sigma \vee \mathcal{V}^\tau \;\;
T \;\; \mathcal{V}^{\sigma T_r} \cap \mathcal{V}^{\tau T_{\ell}}.$
Now $T$ is a complete congruence on $\mathcal{L}(\mathcal{CR})$ while
$\mathcal{V}^{\sigma T_r} = \mathcal{V}^{\sigma T_rT}$ and
$\mathcal{V}^{\tau T_{\ell}} = \mathcal{V}^{\tau T_{\ell}T}$ are both
maximal in their $T$-class. Hence so also is $\mathcal{V}^{\sigma
T_r} \cap \mathcal{V}^{\tau T_{\ell}},$ that is,
$$
\mathcal{V}^{\sigma T_r} \cap \mathcal{V}^{\tau T_{\ell}} =
(\mathcal{V}^{\sigma } \vee \mathcal{V}^\tau )^T\,.
$$
\vskip0.85cm

\noindent
(v) Since $\mathcal{V} \subset \mathcal{V}^K$ and, by Lemma 2.8(ii),
$\mathcal{V}^K = \bigvee\limits_{\sigma \in\Theta^1}
\mathcal{V}^{\sigma (K_{\ell},K_r)},$ it follows that either
$\mathcal{V} \neq \mathcal{V}^{K_{\ell}}$ or $\mathcal{V} \neq
\mathcal{V}^{K_r}.$ By hypothesis, $\mathcal{V}$ is self-dual. Hence
$\mathcal{V} \neq \mathcal{V}^{K_{\ell}}, \mathcal{V}^{K_r}.$ We also
have $\mathcal{V} = \mathcal{V}^{K_{\ell}} \cap \mathcal{V}^{K_r}$
so that $\mathcal{V}^{K_{\ell}}$ and $\mathcal{V}^{K_r}$ must be
incomparable.  Thus we have the base $\mathcal{V}, \mathcal{V}^{K_{\ell}}, \mathcal{V}^{K_r}$ 
for Diagram 4.2.   This, in turn, implies that $\mathcal{V}^{K_{\ell}}
\vee \mathcal{V}^{K_r} \neq \mathcal{V}^{K_{\ell}}, \mathcal{V}^{K_r}.$
In addition $\mathcal{V}^{K_r} = \overline{\mathcal{V}^{K_{\ell}}}$
so that $\mathcal{V}^{K_{\ell}} \vee \mathcal{V}^{K_r}$ is self-dual.

Suppose that $\mathcal{V}^{K_{\ell}} \vee \mathcal{V}^{K_r} =
\mathcal{V}^K.$ \vspace*{0.1cm}Then
$$
\mathcal{V}^K = \mathcal{V}^{KK_{\ell}} = (\mathcal{V}^{K_{\ell}}
\vee \mathcal{V}^{K_r})^{K_{\ell}} \subseteq
\mathcal{V}^{K_rK_{\ell}} \subseteq \mathcal{V}^K \vspace*{0.1cm}\,
$$
so that $\mathcal{V}^K = \mathcal{V}^{K_rK_{\ell}}.$ Hence, either
$\mathcal{V} \subset \mathcal{V}^K$ and $\mathcal{V}^{K_r} = \mathcal{V}^K,$
or $\mathcal{V}^{K_r} \subset \mathcal{V}^K$ and $\mathcal{V}^{K_rK_{\ell}}
= \mathcal{V}^K.$ This implies that either
$$
(\mathcal{V}^K)_{T_r} = (\mathcal{V}^{K_r})_{T_r} =
\mathcal{V}_{T_r} \subseteq \mathcal{V} \subset \mathcal{V}^K
$$
or \vspace*{0.1cm}\,
$$
(\mathcal{V}^K)_{T_{\ell}} =
(\mathcal{V}^{K_rK_{\ell}})_{T_{\ell}} =
(\mathcal{V}^{K_r})_{T_{\ell}} \subseteq \mathcal{V}^{K_r}
\subset \mathcal{V}^K
$$
\vskip0.25cm

\noindent
contradicting either Lemma 2.6(i) or its dual. Hence $\mathcal{V}^{K_{\ell}}
\vee \mathcal{V}^{K_r}$ is a self-dual variety that is a proper
subvariety of $(\mathcal{V}^{K_{\ell}} \vee
\mathcal{V}^{K_r})^K = \mathcal{V}^K.$ In addition, by part
\vspace*{0.1cm}(ii), with $\sigma = T_{\ell}, \tau = T_r$ and Lemma 2.7 (v),
\begin{align*}
(\mathcal{V}^{K_{\ell}} \vee \mathcal{V}^{K_r})^{K_{\ell}}
=\;& (\mathcal{V}^{K_{\ell}K_r} \cap
\mathcal{V}^{K_rK_{\ell}})^{K_{\ell}} =
\mathcal{V}^{K_{\ell}K_rK_{\ell}} \cap \mathcal{V}^{K_rK_{\ell}}
\\[0.2cm]
=\;& \mathcal{V}^{K_rK_{\ell}}
\end{align*}
\vskip0.1cm

\noindent
and similarly $(\mathcal{V}^{K_{\ell}} \vee
\mathcal{V}^{K_r})^{K_r} = \mathcal{V}^{K_{\ell}K_r}.$ Thus we
can build the next step of the diagram starting with
$\mathcal{V}^{K_{\ell}} \vee \mathcal{V}^{K_r}$ and so on.\\[0.4cm]

\noindent
(vi) Note that $\mathcal{V}^\rho \subseteq \mathcal{V}^\sigma $ for
all $\rho ,\sigma \in \Theta $ with $|\rho | < |\sigma |$ so that
with the help of part (iv) we see that the partially ordered set
of the varieties listed in this part is as depicted in Diagram 4.3.
As in part (v), we deduce that
$\mathcal{V} \neq \mathcal{V}^{K_{\ell}}$ and $\mathcal{V} \neq
\mathcal{V}^{K_r}.$  Again, 
$\mathcal{V} = \mathcal{V}^{K_{\ell}} \cap \mathcal{V}^{K_r}$
so that the three varieties $\mathcal{V}, \mathcal{V}^{K_{\ell}},
\mathcal{V}^{K_r}$ are all distinct. 

We have $\mathcal{V} \subseteq \mathcal{V}^{K_{\ell}}\cap \mathcal{V}^T \subseteq 
\mathcal{V}^K \cap \mathcal{V}^T = \mathcal{V}$.  Therefore $\mathcal{V}^{K_{\ell}}$ and 
$\mathcal{V}^T$ ae incomparable.  Hence $\mathcal{V}^{T_{\ell}} \neq \mathcal{V}^T$ and, 
dually, $\mathcal{V}^{T_r} \neq \mathcal{V}^{T}$.   
But $\mathcal{V}^T = \mathcal{V}^{T_{\ell}} \cap \mathcal{V}^{T_r}$.  It
follows that $\mathcal{V}^{T_{\ell}}$ and $\mathcal{V}^{T_r}$ are
incomparable and therefore that $\mathcal{V}^{T_{\ell}} \vee
\mathcal{V}^{T_r}$ is distinct from $\mathcal{V}^{T_{\ell}}$ and
$\mathcal{V}^{T_r}.$ It is possible that $\mathcal{V}^{T_{\ell}} \vee
\mathcal{V}^{T_r} = \mathcal{V}^{T_{\ell}T_r} \cap
\mathcal{V}^{T_rT_{\ell}}$ but definitely this is not always the case, see example
below. However, by Theorem 3.5(ii) and Lemma 3.4(v), $\mathcal{V}^{T_{\ell}}
\vee \mathcal{V}^{T_r}$ is self-dual and therefore, by Theorem 3.5(iii),
$\mathcal{V}^{T_{\ell}T_r} \cap \mathcal{V}^{T_rT_{\ell}} =
(\mathcal{V}^{T_{\ell}} \vee \mathcal{V}^{T_r})^{T}$ is
self-dual.

Suppose that $\mathcal{V}^{T_{\ell}T_r} \cap \mathcal{V}^{T_rT_{\ell}}
= \mathcal{CR}.$ Since $\mathcal{V}^{T_{\ell}T_r} \subseteq
\mathcal{CR},$ we must have $\mathcal{V}^{T_{\ell}T_r} = \mathcal{CR}.$
This means that either $\mathcal{V}^{T_{\ell}} < \mathcal{CR}$ and
$(\mathcal{V}^{T_{\ell}})^{T_r} = \mathcal{CR}$ or $\mathcal{V}
< \mathcal{CR}$ and $\mathcal{V}^{T_{\ell}} = \mathcal{CR}.$ In the
former case $\mathcal{CR}_{T_r} \subseteq \mathcal{V}^{T_{\ell}} <
\mathcal{CR}$ and in the latter case $\mathcal{CR}_{T_{\ell}}
\subseteq \mathcal{V} < \mathcal{CR}.$ Since $\mathcal{CR} =
\mathcal{CR}^K,$ this contradicts Lemma 2.6(i) or its dual. Thus
$\mathcal{V}^{T_{\ell}T_r} \cap \mathcal{V}^{T_rT_{\ell}} =
(\mathcal{V}^{T_{\ell}} \vee \mathcal{V}^{T_r})^T$ is
self-dual and a proper subvariety of $\mathcal{CR}.$

Furthermore,
$$
\begin{array}{rcll}
((\mathcal{V}^{T_{\ell}} \vee \mathcal{V}^{T_r})^T)^{T_{\ell}}
&\!\!\!=\!\!\!& (\mathcal{V}^{T_{\ell}T_r} \cap
\mathcal{V}^{T_rT_{\ell}})^{T_{\ell}} & \quad \mbox{by part (iv)}
\\[0.3cm]
&\!\!\!=\!\!\!& \mathcal{V}^{T_{\ell}T_rT_{\ell}} \cap
\mathcal{V}^{T_rT_{\ell}} & \quad \mbox{by Theorem 2.7(iv)} \\[0.3cm]
&\!\!\!=\!\!\!& (\mathcal{V}^{T_r})^{T_{\ell}}
\end{array}
$$
\vskip0.2cm

\noindent
and dually $((\mathcal{V}^{T_{\ell}} \vee
\mathcal{V}^{T_r})^T)^{T_r} =
(\mathcal{V}^{T_{\ell}})^{T_r}.$

We may now repeat the above argument starting with
$(\mathcal{V}^{T_{\ell}} \vee \mathcal{V}^{T_r})^T$ to extend
Diagram 4.3 to the next level, and so on. \\[0.4cm]

\noindent
(vii) See [PR88] Theorem 4.6(iii).
\end{proof}

\newpage

\[ \hspace*{2cm}\begin{minipage}[t]{12cm}
\beginpicture
\setcoordinatesystem units <.7truecm,.7truecm>
\setplotarea x from 0 to 14, y from -1 to 10
\setlinear
\setdashes <1mm>
\plot 6 0  8.5 1.25 /
\plot 3.5 1.25  6 2.5 /
\plot 6 2.5  8.5 3.75 /
\plot 3.5 3.75  6 5 /
\plot 8.5 6.25  6 5 /
\plot 3.5 6.25  6 7.5 /
\plot 6 7.5  7.5 8.225 /

\setsolid
\plot 3.5 1.25  6 0 /
\plot 6 2.5  8.5 1.25 /
\plot 3.5 3.75  6 2.5 /
\plot 8.5 3.75  6 5 /
\plot 3.5 6.25  6 5 /
\plot 6 7.5  8.5 6.25 /
\plot 6 7.5  4.5 8.225 /

\setsolid
\put {\circle*{2.5}} at 6.075 0
\put {\circle*{2.5}} at 3.575 1.25
\put {\circle*{2.5}} at 8.575 1.25
\put {\circle*{2.5}} at 6.075 2.5
\put {\circle*{2.5}} at 3.575 3.75
\put {\circle*{2.5}} at 8.575 3.75
\put {\circle*{2.5}} at 6.075 5
\put {\circle*{2.5}} at 3.575 6.25
\put {\circle*{2.5}} at 8.575 6.25
\put {\circle*{2.5}} at 6.075 7.5

\put {\circle*{2}} at 6.05 8.4
\put {\circle*{2}} at 6.05 8.6
\put {\circle*{2}} at 6.05 8.8

\put {${\mathcal{V}}$} at 6 -0.4
\put {${\mathcal{V}}^{K_{\ell}}$} at  2.9 1.25
\put {${\mathcal{V}}^{K_rK_{\ell}}$} at 2.6 3.75
\put {${\mathcal{V}}^{K_{\ell}K_rK_{\ell}}$} at 2.35 6.25
\put {${\mathcal{V}}^{K_r}$} at 9.175 1.25

\put {${\mathcal{V}}^{K_{\ell}} \vee {\mathcal{V}}^{K_r} =
{\mathcal{V}}^{K_rK_{\ell}} \cap {\mathcal{V}}^{K_{\ell}K_r}$} at
11.5 2.5

\put {${\mathcal{V}}^{K_{\ell}K_r}$} at 9.45 3.75

\put {${\mathcal{V}}^{K_rK_{\ell}} \vee {\mathcal{V}}^{K_{\ell}K_r}
= {\mathcal{V}}^{K_{\ell}K_rK_{\ell}} \cap
{\mathcal{V}}^{K_rK_{\ell}K_r}$} at 12.625 5

\put {${\mathcal{V}}^{K_rK_{\ell}K_r}$} at 9.75 6.25

\put {${\mathcal{V}}^K$} at 6.2 9.3
\endpicture
\end{minipage} \]
\vskip0.3cm

\centerline{\bf Diagram 4.2 \hspace*{1.5cm}}

\vskip1.4cm

\[ \hspace*{-0.35cm}\begin{minipage}[t]{12cm}
\beginpicture
\setcoordinatesystem units <.7truecm,.7truecm>
\setplotarea x from 0 to 14, y from -1 to 12
\setlinear
\setdashes <1mm>
\plot 6 0  8.5 1.25 /
\plot 3.5 1.25  6 2.5 /

\plot 6 3.7  8.5 4.95 /
\plot 3.5 4.95  6 6.2 /

\plot 8.5 8.65  6 7.4 /
\plot 3.5 8.65  6 9.9 /

\plot 6.035 2.5  6.035 3.7 /
\plot 6.035 6.2  6.035 7.4 /

\plot 6.035 9.9  6.035 10.6 /

\setsolid
\plot 5.975 0  5.975 -1 /

\plot 3.5 1.25  6 0 /
\plot 6 2.5  8.5 1.25 /

\plot 3.5 4.95  6 3.7 /
\plot 8.5 4.95  6 6.2 /

\plot 3.5 8.65  6 7.4 /
\plot 6 9.9  8.5 8.65 /

\plot 5.95 2.5  5.95 3.7 /
\plot 5.95 6.2  5.95 7.4 /

\plot 5.95 9.9  5.95 10.6 /

\setsolid
\put {\circle*{2.5}} at 6.05 0
\put {\circle*{2.5}} at 6.05 -1
\put {\circle*{2.5}} at 3.575 1.25
\put {\circle*{2.5}} at 8.575 1.25
\put {\circle*{2.5}} at 6.075 2.5

\put {\circle*{2.5}} at 6.075 3.7
\put {\circle*{2.5}} at 3.575 4.95
\put {\circle*{2.5}} at 8.575 4.95
\put {\circle*{2.5}} at 6.075 6.2

\put {\circle*{2.5}} at 6.075 7.4
\put {\circle*{2.5}} at 3.575 8.65
\put {\circle*{2.5}} at 8.575 8.65
\put {\circle*{2.5}} at 6.075 9.9

\put {\circle*{2}} at 6.025 10.9
\put {\circle*{2}} at 6.025 11.1
\put {\circle*{2}} at 6.025 11.3

\put {${\mathcal{V}}$} at 6.4 -1
\put {${\mathcal{V}}^T$} at 6.6 -0.1
\put {${\mathcal{V}}^{T_{\ell}}$} at 2.875 1.25
\put {${\mathcal{V}}^{T_rT_{\ell}}$} at 2.65 4.95
\put {${\mathcal{V}}^{T_{\ell}T_rT_{\ell}}$} at 2.45 8.7
\put {${\mathcal{V}}^{T_r}$} at 9.1 1.25

\put {${\mathcal{V}}^{T_{\ell}} \vee {\mathcal{V}}^{T_r}$} at 9
2.5
\put {${\mathcal{V}}^{T_rT_{\ell}} \cap \mathcal{V}^{T_{\ell}T_r} = (\mathcal{V}^{T_{\ell}} 
\vee \mathcal{V}^{T_r})^T$} at
10.6 3.7

\put {${\mathcal{V}}^{T_{\ell}T_r}$} at 9.325 4.95

\put {${\mathcal{V}}^{T_rT_{\ell}} \vee {\mathcal{V}}^{T_{\ell}T_r}$}
at 9.4 6.2
\put {${\mathcal{V}}^{T_{\ell}T_rT_{\ell}} \cap
{\mathcal{V}}^{T_rT_{\ell}T_r} = (\mathcal{V}^{T_rT_{\ell}}\vee \mathcal{V}^{T_{\ell}T_r})^T$} at 11 7.4

\put {${\mathcal{V}}^{T_rT_{\ell}T_r}$} at 9.55 8.65

\put {${\mathcal{CR}}$} at 6 11.75
\endpicture
\end{minipage} \]
\vskip0.3cm

\centerline{\bf Diagram 4.3 \hspace*{1.5cm}}
\vskip1.6cm

\addtocounter{theorem}{2}
\begin{theorem}
Let $\mathcal{V} = \mathcal{V}^T \in \mathcal{L}(\mathcal{CR}), \mathcal{V} \neq \mathcal{CR}$ 
and $\sigma \in \Theta^1.$
Then $\mathcal{V}^\sigma $ is the largest variety
$\mathcal{W} \in \mathcal{L}(\mathcal{CR})$ such that $\mathcal{W}
\cap \mathcal{V}^K = \mathcal{V}^{\sigma (K_{\ell},K_r)}.$
If $\sigma, \tau \in \Theta $ are such that
$|\sigma | = |\tau |,$ $h(\sigma ) \neq h(\tau ),$ then $\mathcal{V}^\sigma
\cap \mathcal{V}^\tau $ is the  largest variety  $\mathcal{W} \in
\mathcal{L}(\mathcal{CR})$ such that $\mathcal{W} \cap \mathcal{V}^K
= \mathcal{V}^{\sigma (K_{\ell},K_r)} \cap \mathcal{V}^{\tau (K_{\ell},K_r)}.$
\end{theorem}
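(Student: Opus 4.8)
The plan is to prove existence (that $\mathcal{V}^\sigma$ really does have the stated intersection with $\mathcal{V}^K$) separately from maximality, and to carry the weight of the maximality argument in a clean reduction that lets me replace an arbitrary competitor $\mathcal{W}$ by its $T$-saturation $\mathcal{W}^T$. Existence is immediate from Lemma 2.6(iii): $\mathcal{V}^\sigma\cap\mathcal{V}^K=\mathcal{V}^{\sigma(K_\ell,K_r)}$, and for the two-word version $(\mathcal{V}^\sigma\cap\mathcal{V}^\tau)\cap\mathcal{V}^K=(\mathcal{V}^\sigma\cap\mathcal{V}^K)\cap(\mathcal{V}^\tau\cap\mathcal{V}^K)=\mathcal{V}^{\sigma(K_\ell,K_r)}\cap\mathcal{V}^{\tau(K_\ell,K_r)}$. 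Next I record three facts about the candidate $\mathcal{V}^\sigma$ (and likewise $\mathcal{V}^\sigma\cap\mathcal{V}^\tau$): it is $T$-maximal, it is proper, and it contains $\mathcal{S}$. For $T$-maximality, write $\sigma=\sigma_1 T_\ell$ (without loss of generality); then $\mathcal{V}^\sigma$ is $T_\ell$-maximal, so $(\mathcal{V}^\sigma)^T=(\mathcal{V}^\sigma)^{T_\ell}\cap(\mathcal{V}^\sigma)^{T_r}=\mathcal{V}^\sigma\cap(\mathcal{V}^\sigma)^{T_r}=\mathcal{V}^\sigma$, and an intersection of two $T$-maximal varieties is $T$-maximal by monotonicity of $\mathcal{U}\mapsto\mathcal{U}^T$, covering $\mathcal{V}^\sigma\cap\mathcal{V}^\tau$. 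Properness follows by peeling the last operator and invoking $\mathcal{CR}_P=\mathcal{CR}$ (Lemma 2.4): if $\mathcal{V}^\sigma=\mathcal{CR}$ then $\mathcal{CR}=\mathcal{CR}_P\subseteq\mathcal{V}^{\sigma_1}$, forcing $\mathcal{V}^{\sigma_1}=\mathcal{CR}$ and, by induction, $\mathcal{V}=\mathcal{CR}$. Thus $\mathcal{V}^\sigma=(\mathcal{V}^\sigma)^T\in[\mathcal{S},\mathcal{CR})$, so Lemma 2.10 applies to it.

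The engine of the maximality argument is a reduction letting me assume the competitor is $T$-maximal. Let $\mathcal{W}$ satisfy $\mathcal{W}\cap\mathcal{V}^K=\mathcal{V}^{\sigma(K_\ell,K_r)}$. Applying the complete endomorphism $\mathcal{U}\mapsto\mathcal{U}^K$ (Lemma 2.7(ii)) gives $\mathcal{W}^K\cap\mathcal{V}^K=(\mathcal{W}\cap\mathcal{V}^K)^K=(\mathcal{V}^{\sigma(K_\ell,K_r)})^K=\mathcal{V}^K$, since $\mathcal{V}^{\sigma(K_\ell,K_r)}$ lies in the $K$-class of $\mathcal{V}$; hence $\mathcal{V}^K\subseteq\mathcal{W}^K$. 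Now put $\mathcal{Z}=\mathcal{W}^T\cap\mathcal{V}^K$. Because $\mathcal{W}\subseteq\mathcal{W}\vee\mathcal{Z}\subseteq\mathcal{W}^T$ and $T$-classes are intervals, $\mathcal{W}\vee\mathcal{Z}$ is $T$-related to $\mathcal{W}$; because $\mathcal{Z}^K\subseteq\mathcal{V}^K\subseteq\mathcal{W}^K$ we also get $(\mathcal{W}\vee\mathcal{Z})^K=\mathcal{W}^K\vee\mathcal{Z}^K=\mathcal{W}^K$, so $\mathcal{W}\vee\mathcal{Z}$ is $K$-related to $\mathcal{W}$. As $K\cap T=\epsilon$, this forces $\mathcal{W}\vee\mathcal{Z}=\mathcal{W}$, whence $\mathcal{Z}\subseteq\mathcal{W}$ and therefore $\mathcal{W}^T\cap\mathcal{V}^K=\mathcal{W}\cap\mathcal{V}^K=\mathcal{V}^{\sigma(K_\ell,K_r)}$. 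Since $\mathcal{V}^\sigma$ is $T$-closed, $\mathcal{W}\subseteq\mathcal{V}^\sigma\iff\mathcal{W}^T\subseteq\mathcal{V}^\sigma$; so it suffices to treat $T$-maximal competitors $\mathcal{X}=\mathcal{X}^T$ with $\mathcal{X}\cap\mathcal{V}^K=\mathcal{V}^{\sigma(K_\ell,K_r)}$ and to show $\mathcal{X}\subseteq\mathcal{V}^\sigma$.

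With this reduction the base case $\sigma=\emptyset$ is clean: if $\mathcal{X}=\mathcal{X}^T$, $\mathcal{X}\cap\mathcal{V}^K=\mathcal{V}$, and $S\in\mathcal{X}\setminus\mathcal{V}$, then Lemma 2.10(i) produces a member of $\langle S\rangle\cap\mathcal{V}^{K_\ell}$ or $\langle S\rangle\cap\mathcal{V}^{K_r}$ lying outside $\mathcal{V}$; since $\langle S\rangle\subseteq\mathcal{X}$ and $\mathcal{V}^{K_\ell},\mathcal{V}^{K_r}\subseteq\mathcal{V}^K$, this contradicts $\mathcal{X}\cap\mathcal{V}^K=\mathcal{V}$. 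For general $\sigma$ I would run the same idea with $\mathcal{V}^\sigma$ in place of $\mathcal{V}$: given $S\in\mathcal{X}\setminus\mathcal{V}^\sigma$, Lemma 2.10(i) applied to $\mathcal{V}^\sigma$ yields a witness in $(\mathcal{V}^\sigma)^{K_\ell}$ or $(\mathcal{V}^\sigma)^{K_r}$ outside $\mathcal{V}^\sigma$; and because $\mathcal{V}^\sigma$ is $T_\ell$-maximal, the decomposition $(\mathcal{V}^\sigma)^{K_\ell}=(\mathcal{V}^\sigma)^K\cap(\mathcal{V}^\sigma)^{T_\ell}=\mathcal{V}^\sigma$ (Lemma 2.5) kills the $K_\ell$-alternative and forces a witness $T\in\langle S\rangle\cap(\mathcal{V}^\sigma)^{K_r}\setminus\mathcal{V}^\sigma$.

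The main obstacle is precisely here: the witness $T$ is detected inside $(\mathcal{V}^\sigma)^K$, whereas the hypothesis only controls $\mathcal{X}\cap\mathcal{V}^K$, and $(\mathcal{V}^\sigma)^K\supsetneq\mathcal{V}^K$ in general because the operators $T_\ell,T_r$ enlarge the kernel. To bridge this gap I would argue by induction on $|\sigma|$, peeling the final letter $T_\ell$ and descending from the $\sigma$-problem to the $\sigma_1$-problem by applying the complete $\cap$-endomorphism $\mathcal{U}\mapsto\mathcal{U}_{T_\ell}$ (Lemma 2.7(iii)) to the equality $\mathcal{X}\cap\mathcal{V}^K=\mathcal{V}^{\sigma(K_\ell,K_r)}$, and re-expressing the right-hand side at level $\sigma_1$ via the fixed-point identities $(\mathcal{V}^K)_{T_\ell}=\mathcal{V}^K$ (Lemma 2.6(i)) and $(\mathcal{V}^{K_\ell})_{T_r}=(\mathcal{V}^{K_\ell})_{K_r}=\mathcal{V}^{K_\ell}$ (Lemma 2.8(i)); tracking the bookkeeping between the $K_\ell,K_r$ and $T_\ell,T_r$ operators is the delicate point. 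The two-word statement proceeds along identical lines: $\mathcal{V}^\sigma\cap\mathcal{V}^\tau$ is $T$-maximal, proper and contains $\mathcal{S}$, the $K\cap T=\epsilon$ reduction applies verbatim, and the detection step uses Lemma 2.10 applied to $\mathcal{V}^\sigma\cap\mathcal{V}^\tau$, now with both the $K_\ell$- and $K_r$-witnesses to be reconciled with $\mathcal{V}^K$.
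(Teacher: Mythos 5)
Your existence step is exactly the paper's (Lemma 2.6(iii)), your base case $\sigma=\emptyset$ via Lemma 2.10 is exactly the paper's, and your reduction to $T$-maximal competitors (showing $\mathcal{W}^T\cap\mathcal{V}^K=\mathcal{W}\cap\mathcal{V}^K$ by playing $K\cap T=\epsilon$ against the join $\mathcal{W}\vee(\mathcal{W}^T\cap\mathcal{V}^K)$) is correct and is a genuine extra not in the paper — though it turns out to buy you nothing, because the argument that actually closes the induction never uses $T$-maximality of the competitor. The problem is that the inductive step, which is the entire content of the maximality claim for $|\sigma|\geq 1$, is left unexecuted. You correctly diagnose that the ``direct'' route (Lemma 2.10 applied to $\mathcal{V}^\sigma$) fails because the witness lands in $(\mathcal{V}^\sigma)^K\supsetneq\mathcal{V}^K$, and you correctly name the right repair (peel $t(\sigma)=T_\ell$ and apply $\mathcal{U}\mapsto\mathcal{U}_{T_\ell}$), but you then declare the bookkeeping ``delicate'' and stop. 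That bookkeeping is the proof.

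Concretely, two things are missing. First, the induction cannot be run on the equality $\mathcal{W}\cap\mathcal{V}^K=\mathcal{V}^{\sigma(K_{\ell},K_r)}$ as you state it: applying $(\cdot)_{T_{\ell}}$ to an equality gives an equality of images, but the image of the right-hand side only satisfies $(\mathcal{V}^{\sigma(K_{\ell},K_r)})_{T_{\ell}}=(\mathcal{V}^{\sigma_1(K_{\ell},K_r)K_{\ell}})_{T_{\ell}}\subseteq\mathcal{V}^{\sigma_1(K_{\ell},K_r)}$ (the two varieties are $K_{\ell}$-related, hence $T_{\ell}$-related, so they share a $T_{\ell}$-lower which lies below the smaller one), and there is no reason for equality at level $\sigma_1$. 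The statement must therefore be strengthened to the containment form ``$\mathcal{W}\cap\mathcal{V}^K\subseteq\mathcal{V}^{\sigma(K_{\ell},K_r)}\Longrightarrow\mathcal{W}\subseteq\mathcal{V}^\sigma$'' and the induction run on that; this is how the paper sets it up. Second, even granting the descent
$$\mathcal{W}_{T_{\ell}}\cap\mathcal{V}^K=(\mathcal{W}\cap\mathcal{V}^K)_{T_{\ell}}\subseteq\mathcal{V}^{\sigma_1(K_{\ell},K_r)}$$
(which uses Lemma 2.7(iii) together with $(\mathcal{V}^K)_{T_{\ell}}=\mathcal{V}^K$ from Lemma 2.6(i)), you never state the lift that finishes the step: the induction hypothesis gives $\mathcal{W}_{T_{\ell}}\subseteq\mathcal{V}^{\sigma_1}$, whence $\mathcal{W}\subseteq(\mathcal{W}_{T_{\ell}})^{T_{\ell}}\subseteq\mathcal{V}^{\sigma_1 T_{\ell}}=\mathcal{V}^\sigma$. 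Without these two pieces the argument does not go through; with them, your first paragraph's auxiliary facts ($T$-maximality, properness) and the whole $T$-saturation reduction become unnecessary. The two-word case then follows, as you say, by applying the containment form separately to $\sigma$ and $\tau$.
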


\begin{proof}
Recall from Lemma 2.6(ii) that, for any $\sigma \in \Theta^1,$ we \vspace*{-0.1cm}have
$$
\mathcal{V}^\sigma \cap \mathcal{V}^K = \mathcal{V}^{\sigma
(K_{\ell},K_r)}. \vspace*{-0.05cm}\,
$$

Now let $\mathcal{W}\in \mathcal{L}(\mathcal{CR})$ be such that $\mathcal{W} \cap
\mathcal{V}^K \subseteq \mathcal{V}^{\sigma (K_{\ell},K_r)}$. For $\sigma =
\emptyset,$ this means 
that $\mathcal{W} \cap \mathcal{V}^K \subseteq \mathcal{V}.$ Suppose
that $\mathcal{W} \not\subseteq \mathcal{V}.$ By Lemma 2.10, 
there exists $S\in \mathcal{W} \backslash
\mathcal{V}$ such that either $S \in \mathcal{V}^{K_{\ell}}\backslash
\mathcal{V}$, or $S\in \mathcal{V}^{K_r}\backslash \mathcal{V}$ 
so that $S\in (\mathcal{W} \cap \mathcal{V}^K) \backslash
\mathcal{V}$ which is a contradiction. Hence $\mathcal{W} \subseteq
\mathcal{V}.$

Now consider $\sigma \in \Theta $ and assume that the claim holds
for shorter words than $\sigma .$ By duality, if suffices to consider
the case where $\sigma = \sigma_1T_{\ell},$ $\sigma_1 \in \Theta^1.$
Let $\mathcal{W} \in \mathcal{L}(\mathcal{CR})$ be such that
$\mathcal{W} \cap \mathcal{V}^K \subseteq \mathcal{V}^{\sigma
(K_{\ell},K_r)}.$ Then, by Lemma 2.7(iii) and the induction hypothesis,
\begin{align*}
\mathcal{W}_{T_{\ell}} \cap \mathcal{V}^K =\;& (\mathcal{W} \cap
\mathcal{V}^K)_{T_{\ell}} \subseteq (\mathcal{V}^{\sigma
(K_{\ell},K_r)})_{T_{\ell}} =
(\mathcal{V}^{\sigma_1
(K_{\ell},K_r)K_{\ell}})_{T_{\ell}} \\[0.15cm]
\subseteq\;& \mathcal{V}^{\sigma_1(K_{\ell},K_r)}\,.
\end{align*}

\noindent
By the induction hypothesis $\mathcal{W}_{T_{\ell}} \subseteq
\mathcal{V}^{\sigma_1}.$ \vspace*{0.05cm}Hence
$$
\mathcal{W} \subseteq (\mathcal{W}_{T_{\ell}})^{T_{\ell}}
\subseteq \mathcal{V}^{\sigma_1T_{\ell}} = \mathcal{V}^\sigma
\vspace*{-0.2cm}
$$
and the induction step holds.

Therefore $\mathcal{V}^\sigma $ must
be the largest \vspace*{0.05cm}variety whose intersection with
$\mathcal{V}^K$ is $\mathcal{V}^{\sigma (K_{\ell},K_r)}.$

For $\sigma ,\tau $ as in the statement and $\mathcal{W} \in
\mathcal{L}(\mathcal{CR}),$ we \vspace*{-0.1cm}have
\begin{eqnarray*}
\mathcal{W} \cap \mathcal{V}^K  \subseteq 
\mathcal{V}^{\sigma(K_{\ell},K_r)} \cap \mathcal{V}^{\tau(K_{\ell},K_r)} 
&\Longrightarrow&  \mathcal{W} \cap \mathcal{V}^K \subseteq \mathcal{V}^{\sigma(K_{\ell}, K_r)}\\
&\Longrightarrow& \mathcal{W} \subseteq \mathcal{V}^{\sigma} \;\mbox{by argument above} .
\end{eqnarray*} 
Similarly $\mathcal{W} \cap \mathcal{V}^K  \subseteq 
\mathcal{V}^{\sigma(K_{\ell},K_r)} \cap \mathcal{V}^{\tau(K_{\ell},K_r)} \Longrightarrow 
\mathcal{W} \subseteq \mathcal{V}^{\tau}$
so that $\mathcal{W} \subseteq  \mathcal{V}^{\sigma}\cap  \mathcal{V}^{\tau}$.
Conversely \vspace*{-0.1cm}\,
\begin{align*}
(\mathcal{V}^\sigma \cap \mathcal{V}^\tau )  \cap
\mathcal{V}^K =\;& (\mathcal{V}^\sigma \cap \mathcal{V}^K)
\cap (\mathcal{V}^\tau \cap \mathcal{V}^K) \\[0.15cm]
=\;& \mathcal{V}^{\sigma (K_{\ell},K_r)} \cap \mathcal{V}^{\tau
(K_{\ell},K_r)}
\end{align*}
and the final claim holds.
\end{proof}

Theorem 4.4 is modelled on the result of Reilly and Zhang [RZ], Lemma 4.11, characterizing 
the largest subvariety of $\mathcal{L}(\mathcal{CR})$ whose intersection with $\mathcal{B}$ 
is a specific variety of bands.\\

We note that, for any variety $\mathcal{V} = [u_\alpha =v_\alpha
]_{\alpha \in A} \in \mathcal{L}(\mathcal{CR}),$ we can derive a basis
of identities for each of the varieties appearing in Diagrams 4.2
and 4.3 (except for those in 4.3 expressed as the join of two
varieties) by a sequence of applications of the bases provided in
Lemma 2.5 for varieties of each of the forms $\mathcal{U}^K,$
$\mathcal{U}^{T_{\ell}},$ $\mathcal{U}^{T_r},$
$\mathcal{U}^{K_{\ell}},$ $\mathcal{U}^{K_r}$ $(\mathcal{U} \in
\mathcal{L}(\mathcal{CR}))$ and taking the union of two bases for
the intersection of varieties.   Diagram 4.5 displays the relationship between 
the network based on $\mathcal{V}$ using the upper operators associated with 
$K_{\ell}, K_r$ and the network based on $\mathcal{V}$ using the upper operators associated with 
$T_{\ell}, T_r$.


\ \\[0.5cm]

\[ \hspace*{1cm}\begin{minipage}[t]{14cm}
\beginpicture
\setcoordinatesystem units <.7truecm,.7truecm>
\setplotarea x from -6.5 to 14, y from 4 to 25
\setplotsymbol ({\eightrm.})
\setlinear
\setlinear
\setdashes <1mm>
\plot 4.2 13  6.7 14.25 /
\plot 1.7 14.25  4.2 15.5 /

\plot 4.2 16.7  6.7 17.95 /
\plot 1.7 17.95  4.2 19.2 /

\plot 6.7 21.65  4.2 20.4 /
\plot 1.7 21.65  4.2 22.9 /

\plot 4.235 15.5  4.235 16.7 /
\plot 4.235 19.2  4.235 20.4 /

\plot 4.235 22.9  4.235 23.6 /

\setsolid
\plot 4.175 13  4.175 12 /

\plot 1.7 14.25  4.2 13 /
\plot 4.2 15.5  6.7 14.25 /

\plot 1.7 17.95  4.2 16.7 /
\plot 6.7 17.95  4.2 19.2 /

\plot 1.7 21.65  4.2 20.4 /
\plot 4.2 22.9  6.7 21.65 /

\plot 4.15 15.5  4.15 16.7 /
\plot 4.15 19.2  4.15 20.4 /

\plot 4.15 22.9  4.15 23.6 /

\setsolid
\put {\circle*{2.5}} at 4.25 13
\put {\circle*{2.5}} at 4.25 12
\put {\circle*{2.5}} at 1.775 14.25
\put {\circle*{2.5}} at 6.775 14.25
\put {\circle*{2.5}} at 4.275 15.5

\put {\circle*{2.5}} at 4.275 16.7
\put {\circle*{2.5}} at 1.775 17.95
\put {\circle*{2.5}} at 6.775 17.95
\put {\circle*{2.5}} at 4.275 19.2

\put {\circle*{2.5}} at 4.275 20.4
\put {\circle*{2.5}} at 1.775 21.65
\put {\circle*{2.5}} at 6.775 21.65
\put {\circle*{2.5}} at 4.275 22.9

\put {\circle*{2}} at 4.225 23.9
\put {\circle*{2}} at 4.225 24.1
\put {\circle*{2}} at 4.225 24.3

\put {${\mathcal{V}}$} at 4.5 12
\put {${\mathcal{V}}^T$} at 4.7 12.85
\put {${\mathcal{V}}^{T_{\ell}}$} at 1.075 14.25
\put {${\mathcal{V}}^{T_rT_{\ell}}$} at 0.85 17.95
\put {${\mathcal{V}}^{T_{\ell}T_rT_{\ell}}$} at 0.65 21.7
\put {${\mathcal{V}}^{T_r}$} at 7.35 14.25

\put {${\mathcal{V}}^{T_{\ell}} \vee {\mathcal{T}}^{T_r}$} at 6.55
15.65
\put {${\mathcal{V}}^{T_rT_{\ell}} \cap {\mathcal{V}}^{T_{\ell}T_r}$} at
6.95 16.55

\put {${\mathcal{V}}^{T_{\ell}T_r}$} at 7.575 17.95

\put {${\mathcal{V}}^{T_rT_{\ell}} \vee {\mathcal{V}}^{T_{\ell}T_r}$}
at 7.05 19.4
\put {${\mathcal{V}}^{T_{\ell}T_rT_{\ell}} \cap
{\mathcal{V}}^{T_rT_{\ell}T_r}$} at 7.5 20.225

\put {${\mathcal{V}}^{T_rT_{\ell}T_r}$} at 7.795 21.65

\put {${\mathcal{CR}}$} at 4.2 24.75

\setdashes <1mm>
\plot -3 4.5  -0.5 5.75 /
\plot -5.5 5.75  -3 7 /
\plot -3 7  -0.5 8.25 /
\plot -5.5 8.25  -3 9.5 /
\plot -0.5 10.75  -3 9.5 /
\plot -5.5 10.75  -3 12 /
\plot -3 12  -1.5 12.725 /

\setsolid
\plot -5.5 5.75  -3 4.5 /
\plot -3 7  -0.5 5.75 /
\plot -5.5 8.25  -0.5  5.75 /
\plot -0.5 8.25  -3 9.5 /
\plot -5.5 10.75 -3 9.5 /

\plot -3 12  -0.5 10.75 /
\plot -3 12  -4.5 12.725 /

\setsolid
\put {\circle*{2.5}} at -2.925 4.5
\put {\circle*{2.5}} at -5.425 5.75
\put {\circle*{2.5}} at -0.425 5.75
\put {\circle*{2.5}} at -2.925 7
\put {\circle*{2.5}} at -5.425 8.25
\put {\circle*{2.5}} at -0.425 8.25
\put {\circle*{2.5}} at -2.925 9.5
\put {\circle*{2.5}} at -5.425 10.75
\put {\circle*{2.5}} at -0.425 10.75
\put {\circle*{2.5}} at -2.925 12

\put {\circle*{2}} at -2.95 12.9
\put {\circle*{2}} at -2.95 13.1
\put {\circle*{2}} at -2.95 13.3

\put {${\mathcal{V}}$} at -3.1 4
\put {${\mathcal{V}}^{K_{\ell}}$} at  -6.2 5.75
\put {${\mathcal{V}}^{K_rK_{\ell}}$} at -6.5 8.25
\put {${\mathcal{V}}^{K_{\ell}K_rK_{\ell}}$} at -6.75 10.75
\put {${\mathcal{V}}^{K_r}$} at 0.35 5.75

\put {${\mathcal{V}}^{K_{\ell}}\!\vee\!{\mathcal{V}}^{K_r}\!=\!
{\mathcal{V}}^{K_rK_{\ell}}\!\cap\!{\mathcal{V}}^{K_{\ell}K_r}$} at
1.7 7

\put {${\mathcal{V}}^{K_{\ell}K_r}$} at 0.65 8.25

\put {${\mathcal{V}}^{K_rK_{\ell}}\!\vee\!{\mathcal{V}}^{K_{\ell}K_r}
\!=\!{\mathcal{V}}^{K_{\ell}K_rK_{\ell}}\!\cap\!
{\mathcal{V}}^{K_rK_{\ell}K_r}$} at 2.825 9.5

\put {${\mathcal{V}}^{K_rK_{\ell}K_r}$} at 0.875 10.75

\put {${\mathcal{V}}^K$} at -2.8 13.8

\setplotsymbol ({\fiverm.})

\plot -5.475 5.75  -4.075 7.4 /
\plot -3.95 7.575  -2.575 9.175 /
\plot -2.45 9.35  -1.925 9.95 /
\plot -1.705 10.225  1.7 14.25 /

\plot -5.475 8.25  -4.225 9.95 /
\plot -4.0575 10.15  -2.825 11.8225 /
\plot -2.75 11.95  -2.605 12.125 /
\plot -2.4 12.4  1.7 17.95 /

\plot -5.475 10.75  1.7 21.65 /

\plot -0.5 5.75  0.4 6.8 /
\plot 0.8 7.275  2.375 9.15 /
\plot 2.95 9.825  6.7 14.25 /

\plot -0.5 8.25  0.175 9.15 /
\plot 0.6275 9.775  1.185 10.525 /
\plot 1.6 11.075  3.2725 13.3275 /
\plot 3.4 13.5  4.65 15.175 /
\plot 4.75 15.325  5.4 16.2 /
\plot 5.905 16.9  6.7 17.95 /

\plot -0.5 10.75  1.7 14.075 /
\plot 1.975 14.5  3.55 16.8755  /
\plot 3.7 17.1  4.8 18.7725 /
\plot 4.95 19  5.5 19.85 /
\plot 6 20.605  6.7 21.65 /

\put{{\bf{Diagram 4.5}}} at 2 3
\endpicture
\end{minipage} \]

\vskip1.6cm


\addtocounter{theorem}{1}
\newpage

\begin{example}
{\rm In general the vertical lines in Diagram 4.5 can represent
non-trivial intervals. For example, if we take $\mathcal{V} =
\mathcal{SG},$ the variety of all semilattices of groups, then
$$
\begin{array}{rcl c l}
\mathcal{V}^{T_{\ell}} &\!\!=\!\!& \mathcal{LRO} & \mbox{---} &
\mbox{the variety of all left regular orthogroups} \\[0.3cm]
\mathcal{V}^{T_r} &\!\!=\!\!& \mathcal{RRO} & \mbox{---} & \mbox{the
variety of all right regular orthogroups} \\[0.3cm]
\mathcal{V}^{T_{\ell}} \vee \mathcal{V}^{T_r} &\!\!=\!\!& \mathcal{RO}
&\mbox{---} & \mbox{the variety of all regular orthogroups} \\[0.3cm]
\mathcal{V}^{T_{\ell}T_r} &\!\!=\!\!& \mathcal{R}^* & \mbox{---} &
\mbox{\parbox[t]{7cm}{the variety of all completely regular semigroups
on which $\mathcal{R}$ is a congruence}} \\[0.8cm]
\mathcal{V}^{T_rT_{\ell}} &\!\!=\!\!& \mathcal{L}^* & \mbox{---} &
\mbox{the dual of $\mathcal{R}^*$} \\[0.3cm]
\mathcal{V}^{T_{\ell}T_r} \cap \mathcal{V}^{T_rT_{\ell}} &\!\!=\!\!&
\mathcal{RBG} & \mbox{---} & \mbox{the variety of all regular bands
of groups.}
\end{array}
$$
\vskip0.2cm
\noindent
For the equality $\mathcal{LRO}\vee\mathcal{RRO} = \mathcal{RO}$ see [PR99], Theorem V.3.3.\\

Let $\mathcal{U} \in
\mathcal{L}(\mathcal{CS}),$ where $\mathcal{CS}$ is the variety of
all completely simple semigroups, be a non-orthodox variety.  Then we have $\mathcal{RO} \vee
\mathcal{U} \in (\mathcal{RO}, \mathcal{RBG}).$ Thus
$$
\mathcal{SG}^{T_{\ell}} \vee \mathcal{SG}^{T_r} = \mathcal{LRO} \vee \mathcal{RRO} \neq
\mathcal{RBG} = \mathcal{SG}^{T_{\ell}T_r} \cap \mathcal{SG}^{T_rT_{\ell}} \,.
$$
this is illustrated in Diagram 4.7.}
\end{example}

\[ \hspace*{-0.35cm}\begin{minipage}[t]{6cm}
\beginpicture
\setcoordinatesystem units <.7truecm,.7truecm>
\setplotarea x from 0 to 14, y from -1 to 5
\setlinear
\setdashes <1mm>
\plot 6 0  8.5 1.25 /
\plot 3.5 1.25  6 2.5 /

\plot 6 3.7  8.5 4.95 /


\plot 6.035 2.5  6.035 3.7 /


\setsolid

\plot 3.5 1.25  6 0 /
\plot 6 2.5  8.5 1.25 /

\plot 3.5 4.95  6 3.7 /


\plot 5.95 2.5  5.95 3.7 /


\setsolid
\put {\circle*{2.5}} at 6.05 0
\put {\circle*{2.5}} at 3.575 1.25
\put {\circle*{2.5}} at 8.575 1.25
\put {\circle*{2.5}} at 6.075 2.5

\put {\circle*{2.5}} at 6.075 3.7
\put {\circle*{2.5}} at 3.575 4.95
\put {\circle*{2.5}} at 8.575 4.95



\put {${\mathcal{SG}}$} at 6 -0.5
\put {${\mathcal{LRO} = \mathcal{SG}}^{T_{\ell}}$} at 1.6 1.25
\put {${\mathcal{L}}^{\ast}$} at 2.65 4.95
\put {${\mathcal{SG}}^{T_r} = \mathcal{RRO}$} at 10.5 1.25

\put {${\mathcal{RO}} = \mathcal{LRO} \vee {\mathcal{RRO}}$} at 9.2
2.5

\put {${\mathcal{L}}^{\ast} \cap {\mathcal{R}}^{\ast}$} at 8 3.7

\put {${\mathcal{R}}^{\ast}$} at 9.325 4.95



\endpicture
\end{minipage} \]
\vskip0.3cm

\centerline{\bf Diagram 4.7 \hspace*{1.4cm}}



We discuss briefly some ways in which the diagrams 4.5 and 4.7 can
be found in $\mathcal{L}(\mathcal{CR})$

\addtocounter{theorem}{1}

\begin{definition}
We refer to the variety $\mathcal{V}$ on which the Diagrams {\rm 4.2} and
{\rm 4.3} are built as the \emph{\bf base point} of the diagrams.
\end{definition}

For instance, we can start with $\mathcal{SG},$ $\mathcal{NBG}.$ Alternatively,
we can start with any self-dual proper subvariety $\mathcal{Z}$ of $\mathcal{CR}$,  pass to
$\mathcal{Z}^{K}$ which is also self-dual, and then pass to
$\mathcal{Z}^{KT}$ which 
is self-dual and a proper subvariety of $\mathcal{Z}^{KTK}$, and 
which also has the properties
required for a base point. For example, if we take $\mathcal{Z}
= \mathcal{T},\:\mathcal{G}$ or $\mathcal{CS}$ we obtain base points
$\mathcal{T}^{KT} = \mathcal{BG},$ $\mathcal{G}^{KT} = \mathcal{O}^T,$
$\mathcal{CS}^{KT} = L\mathcal{O}^T.$ Each of these is self-dual and
so we can repeat the process to obtain
$$
\mathcal{BG}^{KT}, \mathcal{BG}^{(KT)^2}, \ldots ,
\mathcal{BG}^{(KT)^n}, \mathcal{BG}^{(KT)^n},
(\mathcal{O}^T)^{(KT)^n}, (L\mathcal{O}^T)^{(KT)^n}\,.
$$

Another approach is to start with any variety $\mathcal{V} \in
{[\mathcal{S},\mathcal{CR})}.$ Then the varieties $\mathcal{V} \cap
\overline{\mathcal{V}}$ and $\mathcal{V} \vee \overline{\mathcal{V}}$
are both self-dual and so $(\mathcal{V} \cap \overline{\mathcal{V}})^T$
and $(\mathcal{V} \vee \overline{\mathcal{V}})^T$ are both base points
for diagrams and so on.




\section{Multiple copies of the lattice $[\mathcal{S}, \mathcal{B}]$}

Here we will be interested in the underlying abstract lattice of the lattice 
$[\mathcal{S}, \mathcal{B}]$ of varieties of bands containing the variety $\mathcal{S}$ 
of semilattices.  The sublattice $[\mathcal{S}, \mathcal{B}]$ is extremely well-known 
and has a very special role in the theory of semigroups.  In this section, we will 
show that copies of this lattice appear multiple times as a sublattice of various 
kernel classes of 
$\mathcal{L}(\mathcal{CR})$.

\begin{theorem}
Let $\mathcal{V, V}_{\ell}, \mathcal{V}_r \in [\mathcal{S}, \mathcal{CR}]$ be such that 
$\mathcal{V} \subset \mathcal{V}_{\ell} \subset \mathcal{V}^{K_{\ell}}$ and 
$\mathcal{V} \subset \mathcal{V}_r \subset \mathcal{V}^{K_r}$.   Let $\rho, \sigma, 
\tau \in \Theta^1$ be such that $h(\sigma) = T_r, h(\tau) = T_{\ell}$.  
Then the varieties $\mathcal{V}^{\rho(K_{\ell}, K_r)}, \mathcal{V}_{\ell}^{\sigma(K_{\ell}, K_r)}, 
\mathcal{V}_r^{\tau(K_{\ell}, K_r)}$, generate a sublattice $L$ of $\mathcal{V}K$ 
isomorphic to $[\mathcal{S}, \mathcal{B}]$.
\end{theorem}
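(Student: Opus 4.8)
The plan is to produce an explicit order isomorphism between the generated sublattice $L$ and $[\mathcal{S},\mathcal{B}]$, matching the three generating families to the three families of band varieties that exhaust the latter. Before anything else I would check that $L$ lies where it should: every generator has the form $\mathcal{W}^{w}$ with $\mathcal{W}\in\{\mathcal{V},\mathcal{V}_\ell,\mathcal{V}_r\}$ and $w$ a word in $K_\ell,K_r$, and since $\mathcal{V}\subseteq\mathcal{W}\subseteq\mathcal{V}^K$ while the upper operators are monotone with $(\mathcal{V}^K)^{K_\ell}=(\mathcal{V}^K)^{K_r}=\mathcal{V}^K$, monotonicity gives $\mathcal{V}\subseteq\mathcal{W}^{w}\subseteq\mathcal{V}^K$; hence $L\subseteq[\mathcal{V},\mathcal{V}^K]\subseteq\mathcal{V}K$. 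The target is the known Hasse diagram of $[\mathcal{S},\mathcal{B}]$ recorded in [PR99]: a regular-type skeleton refined at each level by a normal-type pair together with its join. I would set up the correspondence so that $\mathcal{V}\leftrightarrow\mathcal{S}$, the skeleton $\{\mathcal{V}^{\rho(K_\ell,K_r)}\}$ matches the regular-type chain (for instance $\mathcal{V}^{K_\ell}\leftrightarrow\mathcal{LRB}$, $\mathcal{V}^{K_r}\leftrightarrow\mathcal{RRB}$), and the insertions $\mathcal{V}_\ell^{\sigma(K_\ell,K_r)},\mathcal{V}_r^{\tau(K_\ell,K_r)}$ match the normal-type vertices (for instance $\mathcal{V}_\ell\leftrightarrow\mathcal{LNB}$, $\mathcal{V}_r\leftrightarrow\mathcal{RNB}$, $\mathcal{V}_\ell\vee\mathcal{V}_r\leftrightarrow\mathcal{NB}$). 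The restrictions $h(\sigma)=T_r$, $h(\tau)=T_\ell$ are exactly right, because $\mathcal{V}_\ell^{K_\ell}=\mathcal{V}^{K_\ell}$ (monotonicity applied to $\mathcal{V}\subseteq\mathcal{V}_\ell\subseteq\mathcal{V}^{K_\ell}$ together with $(\mathcal{V}^{K_\ell})^{K_\ell}=\mathcal{V}^{K_\ell}$), so prefixing $K_\ell$ merely collapses the $\mathcal{V}_\ell$-family back onto the skeleton, and dually for $\mathcal{V}_r$.

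Next I would pin down the order and the operations. Applying the monotone upper operator $(\cdot)^{\sigma(K_\ell,K_r)}$ to $\mathcal{V}\subseteq\mathcal{V}_\ell\subseteq\mathcal{V}^{K_\ell}$ places each insertion between consecutive skeleton nodes, $\mathcal{V}^{\sigma(K_\ell,K_r)}\subseteq\mathcal{V}_\ell^{\sigma(K_\ell,K_r)}\subseteq\mathcal{V}^{K_\ell\sigma(K_\ell,K_r)}$, the word $K_\ell\sigma(K_\ell,K_r)$ being reduced precisely because $h(\sigma)=T_r$; dually for the $\mathcal{V}_r$-family. The joins are then forced to land on nodes already listed: the join--meet identity of Theorem 4.1(ii), whose derivation uses only $K_\ell\cap K_r=\epsilon$ and monotonicity (not self-duality, which we do not have here), gives $\mathcal{V}^{\sigma(K_\ell,K_r)}\vee\mathcal{V}^{\tau(K_\ell,K_r)}=\mathcal{V}^{\sigma(K_\ell,K_r)K_r}\cap\mathcal{V}^{\tau(K_\ell,K_r)K_\ell}$, and the mixed joins of insertions with each other and with the skeleton reduce, using that $K_\ell$ and $K_r$ are complete congruences (Lemma 2.7), to the corresponding normal-type joins of the diagram. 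This shows the listed varieties are closed under $\vee$ and $\cap$, so they constitute exactly $L$, giving an order-preserving surjection from the vertices of $[\mathcal{S},\mathcal{B}]$ onto $L$.

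The crux is that this surjection is injective and reflects order, equivalently that no two distinct vertices of the diagram collapse to the same variety; this is what upgrades it to an isomorphism. The base data are genuinely strict: $\mathcal{V}\subsetneq\mathcal{V}_\ell\subsetneq\mathcal{V}^{K_\ell}$, $\mathcal{V}\subsetneq\mathcal{V}_r\subsetneq\mathcal{V}^{K_r}$ by hypothesis, and $\mathcal{V}=\mathcal{V}^{K_\ell}\cap\mathcal{V}^{K_r}$ (since $K_\ell\cap K_r=\epsilon$ and the classes are intervals), whence $\mathcal{V}^{K_\ell}$ and $\mathcal{V}^{K_r}$ are incomparable. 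I would propagate this strictness upward: a hypothetical coincidence of two higher nodes is pulled back, by applying a suitable lower operator, to a coincidence among the base varieties. The fixed-point identities $(\mathcal{V}^K)_{T_\ell}=\mathcal{V}^K$ (Lemma 2.6(i)) and $(\mathcal{V}^{K_\ell})_{T_r}=(\mathcal{V}^{K_\ell})_{K_r}=\mathcal{V}^{K_\ell}$ (Lemma 2.8(i)), together with completeness of the congruences, guarantee that the lower operators undo the upper ones along the skeleton and along each insertion, so any collapse would force one of the strict base inclusions to be an equality.

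I expect this last step to be the main obstacle. The lower operators invert the upper operators cleanly on the skeleton and at the top of the interval, but they do not invert them as neatly on the intermediate varieties $\mathcal{V}_\ell,\mathcal{V}_r$, so the non-collapse of the normal-type insertions and of their mixed joins must be checked level by level, tracking which lower operator recovers which generator. Once injectivity is secured, the order-preserving surjection of the second step is an order isomorphism, and therefore a lattice isomorphism $L\cong[\mathcal{S},\mathcal{B}]$.
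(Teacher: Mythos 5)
Your overall strategy is the paper's: build the lattice in nine-element blocks from the base quintuple $\mathcal{V},\mathcal{V}_{\ell},\mathcal{V}^{K_{\ell}},\mathcal{V}_r,\mathcal{V}^{K_r}$, separate the nodes using the lower operators $(\cdot)_{K_{\ell}},(\cdot)_{K_r}$ together with $K_{\ell}\cap K_r=\epsilon$, and iterate with $\mathcal{V}_{\ell}\vee\mathcal{V}_r,\ \mathcal{V}^{K_{\ell}}\vee\mathcal{V}_r,\ \mathcal{V}_{\ell}\vee\mathcal{V}^{K_r}$ as the new base, finally adjoining the top. But your write-up stops exactly where the content of the theorem lies. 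The step you defer as ``to be checked level by level'' is the strict containment $\mathcal{V}_{\ell}\vee\mathcal{V}^{K_r}\subset\mathcal{V}_{\ell}^{K_r}$ (and its dual), which is precisely what prevents the normal-type insertions from collapsing onto the skeleton and what licenses passing to the next level. The paper treats this as the one genuinely hard point and proves it by an explicit construction: it takes the free object $F$ of $\mathcal{V}_{\ell}^{K_r}$, adjoins a right-zero ideal $R=F^1$, checks that the resulting semigroup $S$ lies in $(\mathcal{V}_{\ell}^{K_r})^{K_r}=\mathcal{V}_{\ell}^{K_r}$, and shows that $\mathcal{L}^0=\epsilon$ on $S$, so that $S\in(\mathcal{V}^{K_r})^{K_{\ell}}$ would force $F\in\mathcal{V}^{K_r}$ and hence $\mathcal{V}_{\ell}\subseteq\mathcal{V}^{K_r}$, contradicting the incomparability already established. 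Nothing in your proposal supplies this or an equivalent argument.

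That said, the gap is closable with the very tools you name, and you should record the computation rather than gesture at it: by the dual of Lemma 2.8(i), $(\mathcal{V}_{\ell}^{K_r})_{K_{\ell}}=\mathcal{V}_{\ell}^{K_r}$, whereas, since $\mathcal{V}_{\ell}$ is $K_{\ell}$-related to $\mathcal{V}$ and $(\cdot)_{K_{\ell}}$ preserves joins (Lemma 2.7(vi)), $(\mathcal{V}_{\ell}\vee\mathcal{V}^{K_r})_{K_{\ell}}=\mathcal{V}_{K_{\ell}}\vee\mathcal{V}^{K_r}=\mathcal{V}^{K_r}$; equality of the two varieties would therefore give $\mathcal{V}_{\ell}\subseteq\mathcal{V}_{\ell}^{K_r}=\mathcal{V}^{K_r}$, contradicting the incomparability of $\mathcal{V}_{\ell}$ and $\mathcal{V}^{K_r}$. (This is essentially the computation the paper itself deploys in its step (8).) Two further points need attention. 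First, the joins you must control involve upper operators applied to three different base varieties, so Theorem 4.1(ii) does not apply verbatim; the $K_{\ell}$- and $K_r$-class memberships of the mixed joins (the paper's step (6)) have to be established directly before the sandwich-and-intersect argument can be run. Second, the top of $[\mathcal{S},\mathcal{B}]$ is not obtained under finite joins and meets of the generators, so $\mathcal{V}^K$ must be adjoined as the supremum of the constructed chain, which is legitimate because $\bigvee_{u\in\Theta}\mathcal{V}^{u(K_{\ell},K_r)}=\mathcal{V}^K$ by Lemma 2.8(ii).
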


{\bf{Note:}}  Varieties satisfying the  conditions in the hypothesis of Theorem 5.1 are not difficult to find.  For 
example, one could take $\mathcal{S, LNB, LRB}$ and their duals (in which case we obtain the 
interval $[\mathcal{S, B}]$ itself) 
or $\mathcal{SG, LNO, LRO}$ and their 
duals.  Also, the varieties $\mathcal{V}_{\ell}$ and  $\mathcal{V}_r$ may or may not be chosen 
independently of each other.  For instance, if $\mathcal{V}$ is self-dual, then one natural 
choice, after selecting $\mathcal{V}_{\ell}$, would be to take 
$\mathcal{V}_r = {\overline{\mathcal{V}}}_{\ell}$.  But other choices may be available, see 
Theorem 6.1.

\begin{proof}
The foundation of the lattice $L$ is shown in Diagram 5.2.\\

\[ \begin{minipage}[t]{12cm}
\beginpicture
\setcoordinatesystem units <0.8truecm,0.8truecm>
\setplotarea x from 0 to 12, y from -1 to 10
\setlinear


\setdashes <1mm> \setlinear

\setsolid\setlinear









\setdashes <1mm> \setlinear

\setsolid\setlinear





\setdashes <1mm> \setlinear

\setsolid\setlinear
\put {\circle*{2.5}} at 6.075 6.6


\put {${\mathcal{V}^{K_{\ell}}}\vee{\mathcal{V}}^{K_r}$} at 5.9 7.35


\plot 7.4 5.2  6 6.6 /

\plot 4.6 5.2  3.2 6.6 /


\setdashes <1mm> \setlinear
\plot 7.4 5.2  8.8 6.6 /
\plot 4.6 5.2  6 6.6 /

\setsolid\setlinear

\plot 6 1  4.6 2.4 /
\plot 7.4 2.4  6 3.8 /

\plot 4.6 2.4  3.2 3.8 /
\plot 6 3.8  4.6 5.2 /

\plot 8.8 3.8  7.4 5.2 /

\setdashes <1mm> \setlinear
\plot 6 3.8  7.4 5.2 /
\plot 7.4 2.4  8.8 3.8 /
\plot 3.2 3.8  4.6 5.2 /
\plot 4.6 2.4  6 3.8 /
\plot 6 1  7.4 2.4 /

\setsolid\setlinear
\put {\circle*{2.5}} at 6.075 1
\put {\circle*{2.5}} at 4.65 2.4
\put {\circle*{2.5}} at 7.475 2.4
\put {\circle*{2.5}} at 6.075 3.8

\put {\circle*{2.5}} at 3.25 3.8
\put {\circle*{2.5}} at 4.65 5.2
\put {\circle*{2.5}} at 3.25 6.6
\put {\circle*{2.5}} at 8.875 3.8
\put {\circle*{2.5}} at 7.475 5.2
\put {\circle*{2.5}} at 8.875 6.6

\put {$\mathcal{V}$} at 5.6 0.85
\put {$\mathcal{V}_{\ell}$} at 4.0 2.1
\put {$\mathcal{V}_r$} at 7.8 2.0
\put {$\mathcal{V}^{K_{\ell}}$} at 2.45 3.8
\put {$\mathcal{V}_{\ell}\vee \mathcal{V}_r$} at 7 3.7
\put {$\mathcal{V}^{K_r}$} at 9.6 3.8
\put {$\mathcal{V}_r^{K_{\ell}}$} at 2.215 6.6
\put {$\mathcal{V}_{\ell}^{K_r}$} at 9.825 6.6



\setdashes <1mm> \setlinear

\setsolid\setlinear

\endpicture
\end{minipage} \]
\centerline{\bf Diagram 5.2 \ }


\vskip0.2cm

This is simply the sublattice of $\mathcal{L}(\mathcal{CR})$ generated by the elements 
$\mathcal{V}, \mathcal{V}_{\ell}, \mathcal{V}^{K_{\ell}}, \mathcal{V}_r, \mathcal{V}^{K_r}$.  
However, we need to establish that all the elements are distinct and that the intersections are as 
indicated in the diagram.  Clearly, the joins are correctly shown.  We break the argument 
into convenient pieces.\\

\noindent
(1) $\mathcal{V}_{\ell}$ ({\em{respectively}}, $\mathcal{V}_r$) {\em{is not comparable with either}} 
$\mathcal{V}_r$ {\em{or}} $\mathcal{V}^{K_r}$ ({\em{respectively}}, $\mathcal{V}_{\ell}$ {\em{or}} $\mathcal{V}^{K_{\ell}}$). \\
We have 
$$
\mathcal{V}_{\ell} \subseteq \mathcal{V}^{K_r} \Longrightarrow \mathcal{V}_{\ell}\; K_r\; \mathcal{V} 
\Longrightarrow \mathcal{V}_{\ell}\;\; K_{\ell} \cap K_r\;\; \mathcal{V} \Longrightarrow 
\mathcal{V}_{\ell} = \mathcal{V}
$$
which is a contradiction.  A similar argument applies if we assume that $\mathcal{V}^{K_r} 
\subseteq \mathcal{V}_{\ell}$.  Thus $\mathcal{V}_{\ell}$ and $\mathcal{V}^{K_r}$ are 
incomparable.  
If $\mathcal{V}_{\ell}$ and $\mathcal{V}_r$ are comparable then, without loss of 
generality, we may assume that 
$\mathcal{V}_{\ell} \subseteq \mathcal{V}_r$.  This implies that $\mathcal{V}_{\ell}\;\; K_{\ell} \cap K_r\;\; \mathcal{V}$  
which is again a contradiction.  By duality, the claim holds.\\

\noindent
(2) $\mathcal{V}_{\ell} \vee \mathcal{V}_r$ {\em{is not comparable with}} $\mathcal{V}^{K_{\ell}}$ 
(respectively, $\mathcal{V}^{K_r}$) {\em{and is therefore distinct from}} $\mathcal{V}_{\ell}$ and 
$\mathcal{V}^{K_{\ell}}$ ({\em{respectively}} $\mathcal{V}_r$ and 
$\mathcal{V}^{K_r}$).\\

\noindent
That $\mathcal{V}_{\ell} \vee \mathcal{V}_r$ is not contained in $\mathcal{V}^{K_{\ell}}$ follows 
from (1).  On the other hand, by Lemmas 2.8(i) and 2.7(vi), 
$$
\mathcal{V}^{K_{\ell}} \subseteq \mathcal{V}_{\ell} \vee \mathcal{V}_r \Longrightarrow 
\mathcal{V}^{K_{\ell}} = (\mathcal{V}^{K_{\ell}})_{K_r} \subseteq (\mathcal{V}_{\ell} \vee 
\mathcal{V}_r)_{K_r} 
= (\mathcal{V}_{\ell})_{K_r} \vee \mathcal{V} 
\subseteq \mathcal{V}_{\ell}
$$
which is a contradiction.   This establishes the first claim and the remaining claims then follow.\\

\noindent
(3) ($\mathcal{V}_{\ell} \vee \mathcal{V}_r) \vee \mathcal{V}^{K_{\ell}} = \mathcal{V}_r \vee \mathcal{V}^{K_{\ell}}$ {\em{and is distinct from}} $\mathcal{V}_{\ell} \vee \mathcal{V}_r$ 
{\em{and}}  $\mathcal{V}^{K_{\ell}}$.\\
\noindent
The equality is clear and the remaining claims follow from the fact that $\mathcal{V}_{\ell} \vee \mathcal{V}_r$ and $\mathcal{V}^{K_{\ell}}$ are incomparable.\\

\noindent
(4)  $\mathcal{V}^{K_{\ell}}$ {\em{and}} $\mathcal{V}^{K_r}$ {\em{are incomparable and are distinct 
from}} $\mathcal{V}^{K_{\ell}} \vee \mathcal{V}^{K_r}$.\\
\noindent
If, for instance, $\mathcal{V}^{K_{\ell}} \subseteq \mathcal{V}^{K_r}$, then 
$\mathcal{V}_{\ell} \vee \mathcal{V}_r \subseteq \mathcal{V}^{K_r}$ which would contradict (2).  
Reversing the roles of $\mathcal{V}^{K_{\ell}}$ and $\mathcal{V}^{K_r}$ we see that 
these varieties are incomparable and therefore distinct from $\mathcal{V}^{K_{\ell}} \vee \mathcal{V}^{K_r}$.\\

\noindent
(5) $\mathcal{V}^{K_{\ell}} \vee \mathcal{V}_r$ {\em{and}} $\mathcal{V}_{\ell} \vee \mathcal{V}^{K_r}$ 
{\em{are incomparable and both are distinct from}} $\mathcal{V}^{K_{\ell}} \vee \mathcal{V}^{K_r}$.\\
\noindent
Suppose that 
$\mathcal{V}^{K_{\ell}} \vee \mathcal{V}_r \subseteq \mathcal{V}_{\ell} \vee \mathcal{V}^{K_r}$.  
Then by Lemmas 2.8(i) and 2.7(vi), 

\begin{eqnarray*}
\mathcal{V}^{K_{\ell}} = (\mathcal{V}^{K_{\ell}})_{K_r}
&\subseteq& (\mathcal{V}^{K_{\ell}} \vee \mathcal{V}_r)_{K_r} \subseteq (\mathcal{V}_{\ell} \vee \mathcal{V}^{K_r})_{K_r}\\
&=& (\mathcal{V}^{K_r})_{K_r} \vee (\mathcal{V}_{\ell})_{K_r} = \mathcal{V}_{K_r} \vee 
(\mathcal{V}_{\ell})_{K_r}\\
&\subseteq& \mathcal{V} \vee \mathcal{V}_{\ell} = \mathcal{V}_{\ell}
\end{eqnarray*}
which is a contradiction.  The assumption of the reverse containment also leads to a 
contradiction.  Accordingly, the first claim holds.  The second claim follows immediately 
from the first.\\

\noindent
(6)  {\em{The following are sets of}} $K_{\ell}$-{\em{related varieties}}
$$
\{\mathcal{V}_r, \mathcal{V}_{\ell} \vee \mathcal{V}_r, \mathcal{V}^{K_{\ell}} \vee \mathcal{V}_r, 
\mathcal{V}_r^{K_{\ell}}\}, \;\{\mathcal{V}^{K_r}, \mathcal{V}_{\ell} \vee \mathcal{V}^{K_r}, 
\mathcal{V}^{K_{\ell}} \vee \mathcal{V}^{K_r}\}
$$
{\em{and the following are sets of}} $K_r$-{\em{related varieties}}
$$
\{\mathcal{V}_{\ell}, \mathcal{V}_{\ell} \vee \mathcal{V}_r, \mathcal{V}_{\ell} \vee \mathcal{V}^{K_r}, 
\mathcal{V}_{\ell}^{K_r}\}, \;\{\mathcal{V}^{K_{\ell}}, \mathcal{V}^{K_{\ell}} \vee \mathcal{V}_r, \mathcal{V}^{K_{\ell}} \vee \mathcal{V}^{K_r}\}.
$$
\noindent
These claims follow immediately from the choices of $\mathcal{V}_{\ell}$ and $\mathcal{V}_r$ 
together with the fact that $K_{\ell}$ and $K_r$ are complete congruences on 
$\mathcal{L}(\mathcal{CR})$.\\

\noindent
(7) $\mathcal{V}_{\ell} \vee \mathcal{V}^{K_r} \subset \mathcal{V}_{\ell}^{K_r}$ {\em{and}} 
$\mathcal{V}^{K_{\ell}} \vee \mathcal{V}_r \subset \mathcal{V}_r^{K_{\ell}}$.\\
\noindent
Clearly it suffices to establish just one of these containments.  It is also clear that 
$\mathcal{V}_{\ell} \vee \mathcal{V}^{K_r} \subseteq \mathcal{V}_{\ell}^{K_r}$ so that 
the goal is to establish that the containment is proper.  Let $F$ denote the free object 
in $\mathcal{V}_{\ell}^{K_r}$ on a countably infinite set $X$.  Let $R = F^1$ as a set and endowed 
with the right zero multiplication.  Let $S$ denote the disjoint union $F \cup R$ of these 
sets and define a product $\ast$ on $S$ as follows.  The operation $\ast$ agrees with the 
already defined multiplication in $F$ and in $R$ while, for $a \in F, b\in R$, we  have 
$$
a \ast b = b, \;\;\mbox{and} \;\; b\ast a = ba
$$
where $ba$ denotes the product within $F^1$, as the semigroup $F$ with an identity adjoined.  
It is routine to check that this multiplication is associative and that $S$ is a union of groups.  
Therefore $S$ is a completely regular semigroup.  \\

The Rees quotient $S/R$ is isomorphic to $F^0$ and so belongs to $\mathcal{V}_{\ell}^{K_r}$.  
Hence $S \in (\mathcal{V}_{\ell}^{K_r})^{K_r} = \mathcal{V}_{\ell}^{K_r}$.  Now assume, 
by way of contradiction, that $\mathcal{V}_{\ell}^{K_r} \subseteq \mathcal{V}_{\ell} \vee 
\mathcal{V}^{K_r}$.  Since $\mathcal{V}_{\ell} \vee \mathcal{V}^{K_r}$ is $K_{\ell}$-related 
to $\mathcal{V}^{K_r}$, we must have that $S \in (\mathcal{V}^{K_r})^{K_{\ell}}$ which, 
by Lemma 2.5, means that $S/(\tau \cap \mathcal{L}^0) \in \mathcal{V}^{K_r}$ where 
$\tau$ denotes the largest idempotent pure congruence on $S$. It is helpful to note that 
$\tau \cap \mathcal{L}^0 = (\tau \cap \mathcal{L})^0$.\\  

Let $a, b \in S$ be such that $a\; \mathcal{L}^0\; b$.  Since $R$ is a right zero semigroup 
and $a\;\mathcal{L}\; b$ either 
$a = b \in R$ or $a, b \in F$.  If $a, b \in R$ then trivially $a = b$.  If $a, b \in F$ then $1\ast a\; 
\mathcal{L}^0\; 1\ast b$ so that, again since $R$ is a right zero semigroup,
 $1\ast a = 1\ast b$ and $a = b$.  Consequently, $\mathcal{L}^0 = 
\epsilon $, the identity congruence on $S$, and also $\tau \cap \mathcal{L}^0 = \epsilon$.  
This implies that $S \in \mathcal{V}^{K_r}$.  But $F$ is a subsemigroup of $S$.  Hence 
$F \in \mathcal{V}^{K_r}$ which implies that $\mathcal{V}_{\ell}^{K_r} \subseteq 
\mathcal{V}^{K_r}$ and therefore that $\mathcal{V}_{\ell} \subseteq \mathcal{V}^{K_r}$, 
contradicting (1).  Therefore the claim holds.\\

\noindent
(8)  $\mathcal{V}_{\ell}^{K_r}, \mathcal{V}_r^{K_{\ell}}$ {\em{and}} $\mathcal{V}^{K_{\ell}} \vee 
\mathcal{V}^{K_r}$ {\em{are pairwise incomparable}}. \\
\noindent
If $\mathcal{V}_{\ell}^{K_r} \subseteq \mathcal{V}_r^{K_{\ell}}$ then
$$
\mathcal{V}_{\ell} \subseteq \mathcal{V}_{\ell}^{K_r} = (\mathcal{V}_{\ell}^{K_r})_{K_{\ell}} 
\subseteq (\mathcal{V}_{r}^{K_{\ell}})_{K_{\ell}} = (\mathcal{V}_r)_{K_{\ell}}  \subseteq \mathcal{V}_r
$$ 
contradicting (1).  Reversing the roles of $\mathcal{V}_{\ell}^{K_r}$ and $\mathcal{V}_{r}^{K_{\ell}}$, 
we see that these varieties are incomparable.  Next 
\begin{eqnarray*}
\mathcal{V}^{K_{\ell}} \vee \mathcal{V}^{K_r} \subseteq \mathcal{V}_{\ell}^{K_r} 
&\Longrightarrow& (\mathcal{V}^{K_{\ell}} \vee \mathcal{V}^{K_r})_{K_r} \subseteq 
(\mathcal{V}_{\ell}^{K_r})_{K_r} = (\mathcal{V}_{\ell})_{K_r} \subseteq \mathcal{V}_{\ell}\\
&\Longrightarrow& (\mathcal{V}^{K_{\ell}})_{K_r} \vee (\mathcal{V}^{K_r})_{K_r} \subseteq 
\mathcal{V}_{\ell} \;\;\\
&\Longrightarrow& \mathcal{V}^{K_{\ell}} \vee \mathcal{V}_{K_r} \subseteq 
\mathcal{V}_{\ell} \;\;\\
&\Longrightarrow& \mathcal{V}^{K_{\ell}} \subseteq \mathcal{V}_{\ell}
\end{eqnarray*}
contradicting the choice of $\mathcal{V}_{\ell}$.  On the other hand, if 
$\mathcal{V}_{\ell}^{K_r} \subseteq \mathcal{V}^{K_{\ell}} \vee \mathcal{V}^{K_r}$, then 
$$
\mathcal{V}_{\ell}^{K_r} = (\mathcal{V}_{\ell}^{K_r})_{K_{\ell}} \subseteq 
(\mathcal{V}^{K_{\ell}} \vee \mathcal{V}^{K_r})_{K_{\ell}} = \mathcal{V}_{K_{\ell}} \vee \mathcal{V}^{K_r} 
= \mathcal{V}^{K_r}
$$
which contradicts (7).  Hence $\mathcal{V}_{\ell}^{K_r}$ and $\mathcal{V}^{K_{\ell}} \vee \mathcal{V}^{K_r}$ are incomparable.  The dual argument applies to $\mathcal{V}_r^{K_{\ell}}$ 
and $\mathcal{V}^{K_{\ell}} \vee \mathcal{V}^{K_r}$.\\

From (6), we know that the varieties 
$\mathcal{V}_{\ell} \vee \mathcal{V}_r, \mathcal{V}^{K_{\ell}} \vee \mathcal{V}_r $ 
and 
$\mathcal{V}_r^{K_{\ell}}$ are $K_{\ell}$-related and by (3), (7), that 
$$
\mathcal{V}_{\ell} \vee \mathcal{V}_r \subset \mathcal{V}^{K_{\ell}} \vee \mathcal{V}_r
\subset \mathcal{V}_r^{K_{\ell}}.
$$
Similarly, the varieties $\mathcal{V}_{\ell} \vee \mathcal{V}_r, \mathcal{V}_{\ell} \vee 
\mathcal{V}^{K_{r}}$ and 
$\mathcal{V}_{\ell}^{K_r}$ are $K_r$-related and 
$$
\mathcal{V}_{\ell} \vee \mathcal{V}_r \subset \mathcal{V}_{\ell} \vee \mathcal{V}^{K_r} 
\subset \mathcal{V}_{\ell}^{K_r}.
$$
Thus we may repeat the above discussion using $\mathcal{V}_{\ell} \vee \mathcal{V}_r$ 
as our starting point with $\mathcal{V}_{\ell} \vee \mathcal{V}_r, \mathcal{V}_r \vee \mathcal{V}^{K_{\ell}}, \mathcal{V}_{\ell} \vee \mathcal{V}^{K_r}$ replacing $\mathcal{V}, 
\mathcal{V}_{\ell}, \mathcal{V}_r$, respectively, to obtain another nine-element 
sublattice which overlaps with the lattice in Diagram 5.2 to obtain the following 
larger sublattice of $\mathcal{V}K$.  

\vskip-0.5cm

\[ \begin{minipage}[t]{12cm}
\beginpicture
\setcoordinatesystem units <0.8truecm,0.8truecm>
\setplotarea x from 0 to 12, y from -1 to 11
\setlinear


\setdashes <1mm> \setlinear

\setsolid\setlinear



\put {\circle*{1.5}} at 6.075 11
\put {\circle*{1.5}} at 6.075 11.2
\put {\circle*{1.5}} at 6.075 11.4
\put {\circle*{1.5}} at 6.075 11.6
\put {\circle*{1.5}} at 6.075 11.8


\put {$\mathcal{V}^K$} at 6.05 13.5
\put {\circle*{1.5}} at 6.05 13.0




\setdashes <1mm> \setlinear

\setsolid\setlinear


\plot 7.4 8  6 9.4 /



\setdashes <1mm> \setlinear
\plot 4.6 8  6 9.4 /

\setsolid\setlinear
\put {\circle*{2.5}} at 6.075 6.6
\put {\circle*{2.5}} at 6.075 9.4


\put {${\mathcal{V}_r^{K_{\ell}}}\vee{\mathcal{V}}_{\ell}^{K_r}$} at 6.0 10
\put {${\mathcal{V}_r^{K_{\ell}}}\vee{\mathcal{V}}^{K_r}$} at 3.5 8.35
\put {${\mathcal{V}^{K_{\ell}}}\vee{\mathcal{V}}_{\ell}^{K_r}$} at 8.7 8.3
\put {${\mathcal{V}^{K_{\ell}}}\vee{\mathcal{V}}^{K_r}$} at 5.9 7.35

\plot 4.6 8  3.1 9.5 /
\put {\circle*{2.5}} at 3.15 9.52
\put {$(\mathcal{V}_{\ell}\vee \mathcal{V}^{K_r})^{K_{\ell}} = \mathcal{V}^{K_rK_{\ell}}$} at 0 9.55 

\plot 6 6.6  4.6 8 /

\plot 7.4 5.2  6 6.6 /

\plot 4.6 5.2  3.2 6.6 /

\plot 8.8 6.6  7.4 8 /

\setdashes <1mm> \setlinear

\plot 7.4 8   8.8  9.5 /
\put {\circle*{2.5}} at 8.83 9.5
\put {$\mathcal{V}^{K_{\ell}K_{r}} =
(\mathcal{V}^{K_{\ell}}\vee \mathcal{V}_r)^{K_{r}}$} at 12 9.55

\plot 7.4 5.2  8.8 6.6 /
\plot 3.2 6.6  4.6 8 /
\plot 4.6 5.2  6 6.6 /
\plot 6 6.6  7.4 8 /

\setsolid\setlinear
\put {\circle*{2.5}} at 4.65 8
\put {\circle*{2.5}} at 7.475 8

\plot 6 1  4.6 2.4 /
\plot 7.4 2.4  6 3.8 /

\plot 4.6 2.4  3.2 3.8 /
\plot 6 3.8  4.6 5.2 /

\plot 8.8 3.8  7.4 5.2 /

\setdashes <1mm> \setlinear
\plot 6 3.8  7.4 5.2 /
\plot 7.4 2.4  8.8 3.8 /
\plot 3.2 3.8  4.6 5.2 /
\plot 4.6 2.4  6 3.8 /
\plot 6 1  7.4 2.4 /

\setsolid\setlinear
\put {\circle*{2.5}} at 6.075 1
\put {\circle*{2.5}} at 4.65 2.4
\put {\circle*{2.5}} at 7.475 2.4
\put {\circle*{2.5}} at 6.075 3.8

\put {\circle*{2.5}} at 3.25 3.8
\put {\circle*{2.5}} at 4.65 5.2
\put {\circle*{2.5}} at 3.25 6.6
\put {\circle*{2.5}} at 8.875 3.8
\put {\circle*{2.5}} at 7.475 5.2
\put {\circle*{2.5}} at 8.875 6.6

\put {$\mathcal{V}$} at 5.6 0.85
\put {$\mathcal{V}_{\ell}$} at 4.0 2.1
\put {$\mathcal{V}_r$} at 7.8 2.0
\put {$\mathcal{V}^{K_{\ell}}$} at 2.45 3.8
\put {$\mathcal{V}^{K_{\ell}}\vee \mathcal{V}_r$} at 3.2 5.2
\put {$\mathcal{V}_{\ell}\vee \mathcal{V}^{K_r}$} at 8.7 5.2
\put {$\mathcal{V}_{\ell}\vee \mathcal{V}_r$} at 7 3.7
\put {$\mathcal{V}^{K_r}$} at 9.6 3.8
\put {$\mathcal{V}_r^{K_{\ell}}$} at 2.215 6.6

\put {$\mathcal{V}_{\ell}^{K_r}$} at 9.825 6.6



\setdashes <1mm> \setlinear

\setsolid\setlinear


\put {{\bf Diagram 5.3} \ } at 6 0
\endpicture
\end{minipage} \]

Regarding the varieties top left and top right in the main body of the diagram, since 
$\mathcal{V}_{\ell} \vee \mathcal{V}^{K_r}$ is $K_{\ell}$-related to $\mathcal{V}^{K_r}$ we must have 
$(\mathcal{V}_{\ell} \vee \mathcal{V}^{K_r})^{K_{\ell}} = \mathcal{V}^{K_rK_{\ell}}$ and dually 
$(\mathcal{V}^{K_{\ell}}\vee \mathcal{V}_r)^{K_r} = \mathcal{V}^{K_{\ell}K_r}$.

Each new level provides the foundation for the next level and so on, thereby generating 
a lattice of subvarieties of $\mathcal{V}K$ which is isomorphic to the interval $[\mathcal{S}, \mathcal{B})$.  Then by Lemma 2.8(ii), the supremum of all the elements in 
this lattice is 
$$
\bigvee_{u \in \Theta} \mathcal{V}^{u(K_{\ell}, K_r)} = \mathcal{V}^K.
$$
Therefore we may legitimately adjoin $\mathcal{V}^K$ to the top of the Diagram 5.3 to 
obtain a complete sublattice of $\mathcal{V}K$ that is isomorphic to the interval 
$[\mathcal{S}, \mathcal{B}]$.  Note that we have also proved that the lines of positive slope 
connect $K_r$-related varieties and the lines of negative slope connect varieties that are 
$K_{\ell}$-related.  
\end{proof}

The most important illustration of Theorem 5.1 and the only previously known example of the 
behaviour described there has been the lattice $[\mathcal{S}, \mathcal{B}]$ itself which can be 
viewed as illustrating Theorem 5.1 by taking $\mathcal{V} = \mathcal{S}, \mathcal{V}_{\ell} = 
\mathcal{LNB}, \mathcal{V}_r = \mathcal{RNB}$.


\section{More copies of the lattice $[\mathcal{S}, \mathcal{B}]$}

In this section we indicate ways in which the conditions in Theorem 5.1 can be relaxed.

\begin{theorem}
Let $\mathcal{V, V}_{\ell}, \mathcal{V}^{\ell}, \mathcal{V}_r, \mathcal{V}^r  \in [\mathcal{S}, \mathcal{CR}]$ be such that 
$\mathcal{V} \subset \mathcal{V}_{\ell} \subset \mathcal{V}^{\ell} \subseteq \mathcal{V}^{K_{\ell}}$ and 
$\mathcal{V} \subset \mathcal{V}_r \subset \mathcal{V}^r \subseteq \mathcal{V}^{K_r}$.   In addition, assume 
that $(\mathcal{V}^{\ell})_{K_r} = \mathcal{V}^{\ell}$ and $(\mathcal{V}^r)_{K_{\ell}} = \mathcal{V}^r$.  
Then we have the following:\\
{\rm(i)} $\mathcal{V}\; K_{\ell}\;  \mathcal{V}_{\ell}\;  K_{\ell}\; \mathcal{V}^{\ell}$ and $\mathcal{V}\; K_{r}\; \mathcal{V}_r\; K_r\; \mathcal{V}^r$.\\
{\rm(ii)} The varieties
$$
\mathcal{V},  \mathcal{V}_{\ell}, \mathcal{V}^{\ell}, \mathcal{V}_r, \mathcal{V}^r, \mathcal{V}_{\ell} \vee \mathcal{V}_r, 
\mathcal{V}_{\ell} \vee \mathcal{V}^r, \mathcal{V}^{\ell} \vee \mathcal{V}_r, \mathcal{V}^{\ell} \vee \mathcal{V}^r
$$
constitute a sublattice of nine distinct elements in $\mathcal{L}(\mathcal{CR})$ as depicted in Diagram 6.2.\\
{\rm(iii)}  $\mathcal{V}^{\ell} \vee \mathcal{V}_r \subset \mathcal{V}_r^{K_{\ell}}$ and 
$\mathcal{V}_{\ell} \vee \mathcal{V}^r \subset \mathcal{V}_{\ell}^{K_r}$. \\
{\rm(iv)}  By selecting varieties $(\mathcal{V}_{\ell})^r$ and $(\mathcal{V}_{r})^{\ell}$ such that
$$
\mathcal{V}_{\ell}\vee \mathcal{V}^r \subset (\mathcal{V}_{\ell})^r \subseteq (\mathcal{V}_{\ell})^{K_r}\; 
\mbox{and}\;  
((\mathcal{V}_{\ell})^r)_{K_{\ell}} = (\mathcal{V}_{\ell})^r
$$
and
$$
\mathcal{V}^{\ell}\vee \mathcal{V}_r 
\subset (\mathcal{V}_{r})^{\ell} \subseteq (\mathcal{V}_{r})^{K_{\ell}}\; \mbox{and}\; 
((\mathcal{V}_{r})^{\ell})_{K_r} = (\mathcal{V}_{r})^{\ell},
$$
the procedure in parts (i) - (iii) may now be repeated starting from the base consisting 
of the varieties 
$$
\mathcal{V}_{\ell}\vee \mathcal{V}_r, \; \mathcal{V}^{\ell}\vee \mathcal{V}_r, \; (\mathcal{V}_r)^{\ell}, \; 
\mathcal{V}_{\ell}\vee \mathcal{V}^r, \; (\mathcal{V}_{\ell})^r.
$$
Repeating the process and including $\mathcal{V}^K$ yields a sublattice (but not necessarily a 
complete sublattice) of $\mathcal{V}K$ isomorphic to $[\mathcal{S}, \mathcal{B}]$.  
\end{theorem}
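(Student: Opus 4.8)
The plan is to mimic the proof of Theorem 5.1, with $\mathcal{V}^{\ell}$ and $\mathcal{V}^{r}$ taking over the roles played there by $\mathcal{V}^{K_{\ell}}$ and $\mathcal{V}^{K_r}$; the hypotheses $(\mathcal{V}^{\ell})_{K_r}=\mathcal{V}^{\ell}$ and $(\mathcal{V}^{r})_{K_{\ell}}=\mathcal{V}^{r}$ are exactly the analogues of the identities $(\mathcal{V}^{K_{\ell}})_{K_r}=\mathcal{V}^{K_{\ell}}$ of Lemma 2.8(i) that made that construction close up. Throughout I would write $A_0=\mathcal{V}$, $A_1=\mathcal{V}_{\ell}$, $A_2=\mathcal{V}^{\ell}$ and $B_0=\mathcal{V}$, $B_1=\mathcal{V}_r$, $B_2=\mathcal{V}^{r}$, so the nine varieties of (ii) are the joins $A_i\vee B_j$ $(0\le i,j\le 2)$. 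Part (i) is then immediate: the $K_{\ell}$-class of $\mathcal{V}$ is the interval $[\mathcal{V}_{K_{\ell}},\mathcal{V}^{K_{\ell}}]$, and since $\mathcal{V}\subseteq\mathcal{V}_{\ell}\subseteq\mathcal{V}^{\ell}\subseteq\mathcal{V}^{K_{\ell}}$, each $A_i$ lies in it; dually for the $B_j$ and $K_r$.

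For (ii) I would coordinatize the nine varieties by the two lower operators. Since $\mathcal{V}\longrightarrow\mathcal{V}_{K_r}$ is a complete $\vee$-endomorphism (the dual of Lemma 2.7(vi)) and each $B_j\,K_r\,\mathcal{V}$, we get $(A_i\vee B_j)_{K_r}=(A_i)_{K_r}\vee(B_j)_{K_r}=(A_i)_{K_r}$, using $(B_j)_{K_r}=\mathcal{V}_{K_r}\subseteq(A_i)_{K_r}$; this depends only on $i$, and dually $(A_i\vee B_j)_{K_{\ell}}=(B_j)_{K_{\ell}}$ depends only on $j$. Because $A_0,A_1,A_2$ are distinct and pairwise $K_{\ell}$-related, no two of them can be $K_r$-related (as $K_{\ell}\cap K_r=\epsilon$), so $(A_0)_{K_r}\subsetneq(A_1)_{K_r}\subsetneq(A_2)_{K_r}$ is a strict chain, and dually for $(B_j)_{K_{\ell}}$. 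Hence $A_i\vee B_j\longmapsto\bigl((A_i)_{K_r},(B_j)_{K_{\ell}}\bigr)$ is an order embedding of the nine varieties onto the direct product of two three-element chains, giving their distinctness and all the comparabilities of Diagram 6.2; the joins are evident. For the meets, given $W=A_i\vee B_j$ and $W'=A_{i'}\vee B_{j'}$ put $Z=A_{\min(i,i')}\vee B_{\min(j,j')}$; then $Z\subseteq W\cap W'$, while monotonicity gives $(W\cap W')_{K_r}\subseteq(A_i)_{K_r}\cap(A_{i'})_{K_r}=Z_{K_r}$, so $(W\cap W')_{K_r}=Z_{K_r}$ and dually $(W\cap W')_{K_{\ell}}=Z_{K_{\ell}}$. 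Since distinct classes of a congruence have distinct least elements, equality of both bottoms forces $W\cap W'$ and $Z$ to be simultaneously $K_r$- and $K_{\ell}$-related, whence $W\cap W'=Z$ by $K_{\ell}\cap K_r=\epsilon$. Thus the nine varieties form a sublattice isomorphic to the grid.

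For (iii), $\mathcal{V}^{r}\subseteq\mathcal{V}^{K_r}$ gives $\mathcal{V}_{\ell}\vee\mathcal{V}^{r}\subseteq\mathcal{V}_{\ell}\vee\mathcal{V}^{K_r}$, and the latter is a proper subvariety of $\mathcal{V}_{\ell}^{K_r}$ by step (7) of the proof of Theorem 5.1, whose explicit construction of a completely regular $S$ with Rees quotient $S/R\cong F^0$ uses only $\mathcal{V}_{\ell}\not\subseteq\mathcal{V}^{K_r}$; and this holds here, since otherwise $\mathcal{V}_{\ell}$ would be both $K_{\ell}$- and $K_r$-related to $\mathcal{V}$, forcing $\mathcal{V}_{\ell}=\mathcal{V}$. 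The dual argument yields $\mathcal{V}^{\ell}\vee\mathcal{V}_r\subsetneq\mathcal{V}_r^{K_{\ell}}$.

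Finally, for (iv) I would verify that the new base $\mathcal{V}'=\mathcal{V}_{\ell}\vee\mathcal{V}_r$, with $\mathcal{V}'_{\ell}=\mathcal{V}^{\ell}\vee\mathcal{V}_r$, $\mathcal{V}'^{\ell}=(\mathcal{V}_r)^{\ell}$, $\mathcal{V}'_r=\mathcal{V}_{\ell}\vee\mathcal{V}^{r}$, $\mathcal{V}'^{r}=(\mathcal{V}_{\ell})^{r}$, again satisfies every hypothesis, so that (i)--(iii) apply verbatim one level up. The $K_{\ell}$- and $K_r$-relations of (i) for the primed data follow because $K_{\ell},K_r$ are congruences (e.g. $\mathcal{V}^{\ell}\vee\mathcal{V}_r\,K_{\ell}\,\mathcal{V}_{\ell}\vee\mathcal{V}_r$ since $\mathcal{V}^{\ell}\,K_{\ell}\,\mathcal{V}_{\ell}$); the containment $\mathcal{V}'^{\ell}\subseteq(\mathcal{V}')^{K_{\ell}}$ uses $(\mathcal{V}_r)^{K_{\ell}}=(\mathcal{V}_{\ell}\vee\mathcal{V}_r)^{K_{\ell}}$ (equal $K_{\ell}$-classes) with the selection $(\mathcal{V}_r)^{\ell}\subseteq(\mathcal{V}_r)^{K_{\ell}}$; the two new fixed-point conditions are precisely those imposed on $(\mathcal{V}_r)^{\ell}$ and $(\mathcal{V}_{\ell})^{r}$; and part (iii) supplies the room to make the selections. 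Iterating, each grid provides the base of the next, as in the passage from Diagram 5.2 to 5.3, and by Lemma 2.8(ii) the join of all varieties produced is $\bigvee_{\sigma\in\Theta^1}\mathcal{V}^{\sigma(K_{\ell},K_r)}=\mathcal{V}^K$, which is adjoined at the top. I expect the real difficulty to lie exactly here: confirming that the fixed-point hypotheses regenerate at every level so the recursion never stalls, and that the ascending union of grids is isomorphic to the full network $[\mathcal{S},\mathcal{B}]$ rather than merely to a proper sublattice of it.
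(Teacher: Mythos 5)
Your proposal is correct, and since the paper's own ``proof'' of Theorem 6.1 is the single sentence ``the proof follows the lines of the proof of Theorem 5.1,'' you have in fact supplied considerably more than the paper does. Your overall strategy is the intended one, but your execution of part (ii) is genuinely different from, and cleaner than, the model it is based on: where the proof of Theorem 5.1 establishes the nine-element configuration by a sequence of ad hoc incomparability checks (steps (1)--(8)), you coordinatize the nine joins $A_i\vee B_j$ by the pair $\bigl((A_i)_{K_r},(B_j)_{K_\ell}\bigr)$ and exhibit an order isomorphism onto the product of two three-element chains, with the meet computation falling out of $K_\ell\cap K_r=\epsilon$ applied to the equality of both lower projections. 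This buys uniformity (all distinctness and all comparability claims at once) and makes transparent exactly which hypotheses are used: that the lower operators are monotone complete $\vee$-endomorphisms and that the $A_i$ (resp.\ $B_j$) are pairwise $K_\ell$- (resp.\ $K_r$-) related and distinct. Your reduction of (iii) to step (7) of Theorem 5.1 via $\mathcal{V}_\ell\vee\mathcal{V}^r\subseteq\mathcal{V}_\ell\vee\mathcal{V}^{K_r}\subset\mathcal{V}_\ell^{K_r}$ is exactly right, since the hypotheses of that step ($\mathcal{V}\subset\mathcal{V}_\ell\subseteq\mathcal{V}^{K_\ell}$, etc.) are present here. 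The residual worry you voice at the end is not a real obstacle: the regeneration of the hypotheses at the next level is secured by (iii) together with Lemma 2.8(i) (one may always fall back on the default choice $(\mathcal{V}_r)^\ell=(\mathcal{V}_r)^{K_\ell}$, which satisfies the required fixed-point condition), and the theorem deliberately claims only a sublattice, not a complete sublattice: $\mathcal{V}^K$ is adjoined as a top element by fiat, every constructed variety lies in $\mathcal{V}K\subseteq[\mathcal{V}_K,\mathcal{V}^K]$ because $K$ is a complete congruence, and the abstract order isomorphism with $[\mathcal{S},\mathcal{B}]$ does not require $\mathcal{V}^K$ to be the supremum of the union of the grids.
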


\noindent
Note also that the varieties $\mathcal{V}_{\ell}$ and  $\mathcal{V}_r$ may or may not be chosen 
independently.  Once again, if $\mathcal{V}$ is self-dual, then one natural choice, after selecting $\mathcal{V}_{\ell}$, would be to take 
$\mathcal{V}_r = {\overline{\mathcal{V}}}_{\ell}$.  Similarly, one might choose $\mathcal{V}^{\ell} = 
\mathcal{V}^{K_{\ell}}$, but other choices may be available.  The same applies to $\mathcal{V}^r$.  
For instance, one could choose 
$\mathcal{SG, LNO, LRO}$, $\mathcal{RNO, LRB} \vee \mathcal{G}$.\\

\begin{proof}  The proof follows the lines of the proof of Theorem 5.1. 

 
\end{proof}

It is natural to wonder the extent to which different lattices constructed as above starting from 
the same base variety $\mathcal{V}$ might overlap.  The following simple observation sheds 
some light on that, especially for what might be called the {\em{default}} option after the 
choice of the starting varieties $\mathcal{V}, \mathcal{V}_{\ell}, \mathcal{V}^{\ell}, \mathcal{V}_r, 
\mathcal{V}^r$, namely where we always choose the largest element in each $K_{\ell}$-class and 
each $K_{r}$-class.  This leads to the lattice consisting of all the elements of the form 
$\mathcal{V}, (\mathcal{V}_{\ell})^u, (\mathcal{V}^{\ell})^u, (\mathcal{V}_r)^v, 
(\mathcal{V}^r)^v$, where $u = u(K_{\ell},K_r), v = v(K_{\ell}, K_r), h(u) = K_r, h(v) = K_{\ell}$ 
and their intersections.

\begin{lemma}
Let $\mathcal{U, V} \in [\mathcal{S}, \mathcal{CR}], \mathcal{U} \; K_{\ell} \;  \mathcal{V}, \; 
\mathcal{U} \neq \mathcal{V}$.  Let $u = u(K_{\ell}, K_r)$ be such that $h(u) = T_r$.  Then 
$\mathcal{U}^u \neq \mathcal{V}^u$.
\end{lemma}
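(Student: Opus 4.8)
The plan is to apply the letters of $u$ one at a time and to propagate distinctness by induction on the length of $u$. Since $u = u(K_\ell,K_r)$ is obtained from a word in $\Theta$ and $h(u) = T_r$, I may take $u = P_1P_2\cdots P_n$ to be an alternating word with $P_1 = K_r$. Put $\mathcal{U}_0 = \mathcal{U}$, $\mathcal{V}_0 = \mathcal{V}$ and $\mathcal{U}_i = \mathcal{U}_{i-1}^{\,P_i}$, $\mathcal{V}_i = \mathcal{V}_{i-1}^{\,P_i}$, so that $\mathcal{U}^u = \mathcal{U}_n$ and $\mathcal{V}^u = \mathcal{V}_n$; all of these lie in $[\mathcal{S},\mathcal{CR}]$ since the upper operators only enlarge a variety. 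I will use repeatedly the elementary fact that, because each $P$-class is an interval with greatest element $\mathcal{W}^P$ and least element $\mathcal{W}_P$, one has $\mathcal{X}^P = \mathcal{Y}^P$ if and only if $\mathcal{X}\,P\,\mathcal{Y}$ (and dually for the lower operators). The aim is to show $\mathcal{U}_i \neq \mathcal{V}_i$ for every $i$, which at $i=n$ is the desired conclusion.

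For the base case, note that $\mathcal{U}\,K_\ell\,\mathcal{V}$ together with $\mathcal{U} \neq \mathcal{V}$ forces $\mathcal{U}$ and $\mathcal{V}$ to be $K_r$-unrelated: otherwise $\mathcal{U}\,(K_\ell \cap K_r)\,\mathcal{V}$, and since $K_\ell \cap K_r = \epsilon$ this would give $\mathcal{U} = \mathcal{V}$. By the equivalence above, $\mathcal{U}_1 = \mathcal{U}^{K_r} \neq \mathcal{V}^{K_r} = \mathcal{V}_1$.

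The inductive step carries the real content. Suppose $\mathcal{U}_i \neq \mathcal{V}_i$ for some $1 \le i < n$. The variety $\mathcal{U}_i = \mathcal{U}_{i-1}^{\,P_i}$ is the greatest element of a $P_i$-class, and by Lemma 2.8(i) together with its dual, namely $(\mathcal{W}^{K_\ell})_{K_r} = \mathcal{W}^{K_\ell}$ and $(\mathcal{W}^{K_r})_{K_\ell} = \mathcal{W}^{K_r}$, such a $P_i$-top is simultaneously the least element of its $P_{i+1}$-class (here $P_{i+1} \neq P_i$ by alternation). The same holds for $\mathcal{V}_i$. Thus $\mathcal{U}_i$ and $\mathcal{V}_i$ are distinct least elements of their respective $P_{i+1}$-classes; were they $P_{i+1}$-related they would both be the unique least element of one common class, hence equal, a contradiction. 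So they are $P_{i+1}$-unrelated, and the equivalence again yields $\mathcal{U}_{i+1} = \mathcal{U}_i^{\,P_{i+1}} \neq \mathcal{V}_i^{\,P_{i+1}} = \mathcal{V}_{i+1}$, completing the induction and giving $\mathcal{U}^u = \mathcal{U}_n \neq \mathcal{V}_n = \mathcal{V}^u$. The one genuinely non-formal ingredient is exactly the reciprocity furnished by Lemma 2.8(i) and its dual, that every $K_r$-top is a $K_\ell$-bottom and every $K_\ell$-top a $K_r$-bottom: this is what prevents the two varieties from being merged back together by the next operator, since without it one could have $\mathcal{U}^{K_r}\,K_\ell\,\mathcal{V}^{K_r}$ and the second letter $K_\ell$ of $u$ would already collapse them.
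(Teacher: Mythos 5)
Your proof is correct and is essentially the paper's own argument: both induct on $|u|$, both use $K_{\ell} \cap K_r = \epsilon$ to get the base case $\mathcal{U}^{K_r} \neq \mathcal{V}^{K_r}$, and both rely on Lemma 2.8(i) and its dual (every $K_{\ell}$-top is a $K_r$-bottom and vice versa) to pass from one letter of $u$ to the next. The only difference is presentational: the paper runs the inductive step in contrapositive form, supposing $\mathcal{U}^{u} = \mathcal{V}^{u}$ for $u = vK_{\ell}$ and applying $(\,\cdot\,)_{K_{\ell}}$ to recover $\mathcal{U}^{v} = \mathcal{V}^{v}$, whereas you propagate distinctness forward along the prefixes of $u$; these are the same step read in opposite directions.
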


\begin{proof}
We argue by induction on $\mid u \mid$.   Since we know that $\mathcal{U} \; K_{\ell} \;  \mathcal{V}, \; 
\mathcal{U} \neq \mathcal{V}$ and that $K_{\ell} \cap K_r = \epsilon$, it follows that $\mathcal{U}$  and 
$\mathcal{V}$ are not $K_r$-related.  Hence $\mathcal{U}^{K_r} \neq \mathcal{V}^{K_r}$ and the claim 
holds for $\mid u \mid = 1$.  Now assume that the claim is true for all words of shorter length than $u$ 
and that $\mid u \mid > 1$.  
Without loss of generality, we may assume that $t(u) = K_{\ell}$ and that $u = vK_{\ell}$ for suitable $v$ 
with $t(v) = K_r$.  
Suppose that $\mathcal{U}^u = \mathcal{V}^u$.  We must have $t(v) = K_r$ so that 
$$
\mathcal{U}^v = (\mathcal{U}^v)_{K_{\ell}} = ((\mathcal{U}^v)^{K_{\ell}})_{K_{\ell}} = 
(\mathcal{U}^u)_{K_{\ell}} = (\mathcal{V}^u)_{K_{\ell}} = 
((\mathcal{V}^v)^{K_{\ell}})_{K_{\ell}} = (\mathcal{V}^v)_{K_{\ell}} = \mathcal{V}^v.
$$
Since $\mid v \mid < \mid u \mid$, this contradicts the induction hypothesis and 
therefore $\mathcal{U}^u \neq \mathcal{V}^u$ as required.
\end{proof}


We know that $\mid \mathcal{V}K_{\ell}\mid = 3, 4\;\mbox{or}\; 5$ for all 
$\mathcal{V} \in [\mathcal{S}, \mathcal{B}]$ 
and Theorem 6.1 applies only in the context of $K_{\ell}$ and $K_r$ classes containing at least three 
elements.  On the other hand, the cardinality of $\mathcal{SG}K_{\ell}$ is $2^{{\aleph}_0}$.  Recall 
the definition and basic properties of the variety $\mathcal{LRO}$of left regular orthogroups from [PR99].  We conclude with an analysis of the $K_{\ell}$-class of $\mathcal{SG}$.  This has some 
interesting features.

\begin{lemma}  Let $\mathcal{V} \in \mathcal{SG}K_{\ell}$. 
\begin{enumerate}
\item[{\rm (i)}] $\mathcal{SG}K_{\ell} = [\mathcal{SG}, \mathcal{LRO}]$. 
\item[{\rm (ii)}]  $\mathcal{V} \cap \mathcal{B} \in \{\mathcal{S, LNB, LRB}\}$.
\item[{\rm (iii)}]  $\mathcal{V} \cap \mathcal{B} = \mathcal{S} \Longleftrightarrow \mathcal{V} 
= \mathcal{SG}$.
\item[{\rm (iv)}]  $\mathcal{V} \cap \mathcal{B} = \mathcal{LNB} \Longleftrightarrow \mathcal{V} 
= \mathcal{LNO}$.
\item[{\rm (v)}]  $\mathcal{V} \cap \mathcal{B} = \mathcal{LRB} \Longleftrightarrow \mathcal{V} 
\in [\mathcal{LRB\vee \mathcal{G}, LRO}]$.
\item[{\rm (vi)}]   $[\mathcal{LRB\vee \mathcal{G}, LRO}] \cong \mathcal{L}(\mathcal{G})$.
\end{enumerate}
\end{lemma}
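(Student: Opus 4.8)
The plan is to first determine the interval $\mathcal{SG}K_{\ell}=[\mathcal{SG}_{K_{\ell}},\mathcal{SG}^{K_{\ell}}]$ in part (i), and then to read off (ii)--(vi) from the map $\Phi\colon\mathcal{V}\mapsto\mathcal{V}\cap\mathcal{B}$ restricted to it. For the top, Lemma 2.5 gives $\mathcal{SG}^{K_{\ell}}=\mathcal{SG}^{K}\cap\mathcal{SG}^{T_{\ell}}$. Here $\mathcal{SG}^{T_{\ell}}=\mathcal{LRO}$ was computed in the Section 4 Example, and $\mathcal{SG}^{K}=\mathcal{O}$ follows from $\mathcal{O}=\mathcal{G}^{K}$: since $\mathcal{G}\subseteq\mathcal{SG}\subseteq\mathcal{O}$ and the $K$-closure is monotone and idempotent, $\mathcal{O}=\mathcal{G}^{K}\subseteq\mathcal{SG}^{K}\subseteq\mathcal{O}^{K}=\mathcal{O}$. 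As $\mathcal{LRO}\subseteq\mathcal{O}$, this yields $\mathcal{SG}^{K_{\ell}}=\mathcal{LRO}$. For the bottom, completeness of the congruence $K_{\ell}$ shows that any $\mathcal{W}\,K_{\ell}\,\mathcal{SG}$ satisfies $\mathcal{W}\cap\mathcal{SG}\,K_{\ell}\,\mathcal{SG}$, so it suffices to rule out proper subvarieties of $\mathcal{SG}$ that are $K_{\ell}$-related to $\mathcal{SG}$. Running through the known subvarieties of $\mathcal{SG}$ — the group varieties $\mathcal{H}$ and the varieties $\mathcal{S}\vee\mathcal{H}$ — and using $\mathcal{W}^{T_{\ell}}=\mathcal{LG}\circ\mathcal{W}$ (Lemma 2.5), I would check that only $\mathcal{W}=\mathcal{SG}$ has $T_{\ell}$-closure equal to $\mathcal{LRO}$ (the others have strictly smaller group content, or no semilattice part and hence $T_{\ell}$-closure inside $\mathcal{CS}$). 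Hence $\mathcal{SG}_{K_{\ell}}=\mathcal{SG}$ and (i) holds.

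Part (ii) is then immediate: intersecting $[\mathcal{SG},\mathcal{LRO}]$ with $\mathcal{B}$ and using $\mathcal{SG}\cap\mathcal{B}=\mathcal{S}$ and $\mathcal{LRO}\cap\mathcal{B}=\mathcal{LRB}$ places $\mathcal{V}\cap\mathcal{B}$ in the band interval $[\mathcal{S},\mathcal{LRB}]$, which is the three-element chain $\{\mathcal{S},\mathcal{LNB},\mathcal{LRB}\}$ in the lattice of band varieties. The remaining parts study the fibres of $\Phi$ via two facts valid for every $\mathcal{V}\in[\mathcal{SG},\mathcal{LRO}]$: (a) $\mathcal{SG}\subseteq\mathcal{V}$, so $\mathcal{SG}\vee(\mathcal{V}\cap\mathcal{B})\subseteq\mathcal{V}$; and (b) since $\mathcal{V}\subseteq\mathcal{O}$, the idempotents of any $S\in\mathcal{V}$ form a subsemigroup, so $E(S)\in\mathcal{V}\cap\mathcal{B}$ and the band part governs the idempotent structure. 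For (iii), (b) gives $\mathcal{V}\cap\mathcal{B}=\mathcal{S}\Rightarrow E(S)$ is a semilattice for all $S\in\mathcal{V}\Rightarrow$ each $S$ is Clifford $\Rightarrow\mathcal{V}\subseteq\mathcal{SG}$, and with $\mathcal{SG}\subseteq\mathcal{V}$ we obtain $\mathcal{V}=\mathcal{SG}$; the converse is $\mathcal{SG}\cap\mathcal{B}=\mathcal{S}$. For (v), (a) gives $\mathcal{LRB}\vee\mathcal{G}=\mathcal{SG}\vee\mathcal{LRB}\subseteq\mathcal{V}\subseteq\mathcal{LRO}$, while every member of $[\mathcal{LRB}\vee\mathcal{G},\mathcal{LRO}]$ has band part $\mathcal{LRB}$ because $(\mathcal{LRB}\vee\mathcal{G})\cap\mathcal{B}=\mathcal{LRB}=\mathcal{LRO}\cap\mathcal{B}$; so the fibre over $\mathcal{LRB}$ is exactly this interval.

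The same two facts reduce (iv) to the structural equality $\mathcal{SG}\vee\mathcal{LNB}=\mathcal{LNO}$: by (b) the condition $\mathcal{V}\cap\mathcal{B}=\mathcal{LNB}$ forces $E(S)\in\mathcal{LNB}$, hence $S\in\mathcal{LNO}$ and $\mathcal{V}\subseteq\mathcal{LNO}$, while (a) gives $\mathcal{SG}\vee\mathcal{LNB}\subseteq\mathcal{V}$; thus the fibre over $\mathcal{LNB}$ is $[\mathcal{SG}\vee\mathcal{LNB},\mathcal{LNO}]$, which collapses to $\{\mathcal{LNO}\}$ exactly when $\mathcal{SG}\vee\mathcal{LNB}=\mathcal{LNO}$. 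I would prove this equality from the structure theory of $\mathcal{LRO}$ in [PR99]: a left normal orthogroup is a strong semilattice of left groups $L_{\alpha}\times G_{\alpha}$, and a homomorphism of left groups splits as a transformation of the left-zero coordinates together with a group homomorphism; consequently the projection onto the band $S/\mathcal{H}\in\mathcal{LNB}$ and the projection onto the Clifford semigroup of $G$-coordinates separate the points of $S$, exhibiting $S$ as a subdirect product and so placing $S\in\mathcal{LNB}\vee\mathcal{SG}$. It is precisely this splitting that fails for general left regular bands, which explains why the fibre over $\mathcal{LRB}$ is large rather than a single point.

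Part (vi) is the main obstacle. I would construct mutually inverse order maps between $[\mathcal{LRB}\vee\mathcal{G},\mathcal{LRO}]$ and $\mathcal{L}(\mathcal{G})$ using the coordinatization of left regular orthogroup varieties in [PR99]. Over the band part $\mathcal{LRB}$ — with the group part $\mathcal{G}$ and the completely simple part $\mathcal{LG}$ all constant on this interval — the members are distinguished only by the group-theoretic identities they impose on the non-splitting interaction between group elements and the left regular band of idempotents; assigning to $\mathcal{W}$ the group variety recording these identities, and conversely lifting a group variety $\mathcal{H}$ to the corresponding subvariety of $\mathcal{LRO}$, should give the isomorphism. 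Verifying that this correspondence is well defined, bijective, and order-preserving in both directions — rather than merely monotone — through the free objects and identity bases for $\mathcal{LRO}$ is where the substantive work lies, and is the step I expect to be hardest.
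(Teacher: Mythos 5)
Your treatment of parts (ii), (iii) and (v) is sound and essentially matches the paper's: (ii) is the squeeze $\mathcal{S}=\mathcal{SG}\cap\mathcal{B}\subseteq\mathcal{V}\cap\mathcal{B}\subseteq\mathcal{LRO}\cap\mathcal{B}=\mathcal{LRB}$, and (iii) and (v) are the two-sided containment arguments you give (the paper gets ``$\mathcal{V}\cap\mathcal{B}=\mathcal{S}\Rightarrow\mathcal{V}\subseteq\mathcal{SG}$'' from [PR99] Theorem IV.2.4 rather than from your orthodoxy-plus-Clifford argument, but your version works once (i) is in hand). For (i) and (iv) you take a more self-contained route than the paper, which simply cites [RK1] Theorem 6.3(iv) for (i) and [PR99] Corollary IV.2.12 for the identity $\mathcal{LNO}=\mathcal{S}\vee\mathcal{LZ}\vee\mathcal{G}$ underlying (iv). Your computation $\mathcal{SG}^{K_{\ell}}=\mathcal{SG}^{K}\cap\mathcal{SG}^{T_{\ell}}=\mathcal{O}\cap\mathcal{LRO}=\mathcal{LRO}$ is correct, but the lower half of (i) and the equality $\mathcal{SG}\vee\mathcal{LNB}=\mathcal{LNO}$ in (iv) are only gestured at (``I would check\dots'', ``I would prove this from the structure theory\dots''); both are completable (the subvarieties of $\mathcal{SG}$ are exactly $\mathcal{T},\mathcal{S},\mathcal{H},\mathcal{S}\vee\mathcal{H}$ for $\mathcal{H}\in\mathcal{L}(\mathcal{G})$, and the $K$-relation already forces $\mathcal{W}\cap\mathcal{G}=\mathcal{G}$, which kills every proper candidate), but as written they are placeholders rather than proofs.

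The genuine gap is part (vi). What you offer there is a description of a correspondence you hope exists --- ``assigning to $\mathcal{W}$ the group variety recording these identities'' --- together with the explicit admission that establishing well-definedness, bijectivity and order-preservation ``is where the substantive work lies.'' That is precisely the content of the statement, so nothing has been proved; the paper does not prove it either but delegates it to [RK1], Theorem 6.3(iv), which is a legitimate citation, whereas a blind proof cannot leave the claim as an unexecuted plan. Note also that an isomorphism with $\mathcal{L}(\mathcal{G})$ is strictly stronger than what the appended Theorem 11.1-style construction gives (an order-inverting injection of the lattice of fully invariant subgroups of a free group into the interval, hence cardinality $2^{\aleph_0}$): one must also show every member of $[\mathcal{LRB}\vee\mathcal{G},\mathcal{LRO}]$ arises from a group variety and that the correspondence respects meets and joins, and none of that is addressed in your sketch.
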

\begin{proof}
(i)  See [RK1] Theorem 6.3(iv).\\
(ii)  We have
$$
\mathcal{S} = \mathcal{SG} \cap \mathcal{B} \subseteq \mathcal{V} \cap \mathcal{B} 
\subseteq \mathcal{LRO} \cap \mathcal{B} = \mathcal{LRB}
$$
where $[\mathcal{S, LRB}] = \{\mathcal{S, LNB, LRB}\}$.
Therefore the claim holds.\\
(iii)  By [PR99] Theorem IV.2.4, if $\mathcal{V} \cap \mathcal{B} = \mathcal{S}$ then 
$\mathcal{V} \subseteq \mathcal{SG}$.  By the hypothesis and part (i), $\mathcal{SG} \subseteq 
\mathcal{V}$ so that equality prevails.  The converse implication is trivial.\\
(iv)  By [PR99] Corollary IV.2.12, if $\mathcal{V} \cap \mathcal{B} = \mathcal{LNB}$ then 
$\mathcal{V} \subseteq \mathcal{LNO}$ so that $\mathcal{LNB} \subseteq 
\mathcal{V} \subseteq \mathcal{LNO}$.  On the other hand, by [PR99] Corollary IV.2.12, 
$\mathcal{LNO} = \mathcal{S} \vee \mathcal{LZ} \vee \mathcal{G} \subseteq \mathcal{V}$.  Hence 
$\mathcal{V} = \mathcal{LNO}$.  The converse is clear.\\
(v)  We have
\begin{eqnarray*}
\mathcal{V} \in \mathcal{SG}K_{\ell}, \mathcal{V}\cap\mathcal{B} = \mathcal{LRB} 
&\Longrightarrow& \mathcal{LRB, SG} \subseteq \mathcal{V} \subseteq 
\mathcal{SG}^{K_{\ell}}\\
&\Longrightarrow&  \mathcal{V} \in [\mathcal{LRB} \vee \mathcal{G}, \mathcal{LRO}].
\end{eqnarray*}

\noindent
Conversely, let $\mathcal{V} \in [\mathcal{LRB} \vee \mathcal{G}, \mathcal{LRO}]$.  Then 
$$
\mathcal{SG} \subseteq \mathcal{V} \subseteq \mathcal{LRO} = \mathcal{SG}^{K_{\ell}}
$$
so that $\mathcal{V} \in \mathcal{SG}K_{\ell}$.  In addition,
$$
\mathcal{LRB} \subseteq \mathcal{V} \cap \mathcal{B} \subseteq \mathcal{LRO} \cap 
\mathcal{B} = \mathcal{LRB}.
$$
Thus $\mathcal{V} \cap \mathcal{B} = \mathcal{LRB}$ and the claim holds.\\
(vi)  See Reilly [R1], Theorem 6.3(iv).
\end{proof}


\vskip1.6cm

\noindent
{\small
Department of Mathematics \\
Simon Fraser University \\
Burnaby, British Columbia\\
Canada V5A 1S6 \\[0.2cm]
Email: nreilly@sfu.ca
}
\end{document}